\documentclass{amsart_m}

%
\setlength{\oddsidemargin}{.5cm} 
\setlength{\evensidemargin}{.5cm}
\setlength{\textwidth}{15cm} 
\setlength{\textheight}{20cm}
\setlength{\topmargin}{1cm}

\linespread{1.1}

\usepackage[dvips]{epsfig}
\usepackage{graphicx}
\usepackage{float}

\usepackage{ulem}

\usepackage{amsmath}
\usepackage{amssymb}
\usepackage{amsbsy}
\usepackage{bbm}
\usepackage{eucal}
\usepackage{mathrsfs}
\usepackage{mathabx}

\usepackage{enumitem}

\usepackage[colorlinks=true,citecolor=blue, linkcolor=red]{hyperref}

\usepackage{dsfont}
\usepackage{palatino}
\usepackage{xcolor}

\usepackage{multirow}

\usepackage{amsthm}

\newtheorem{theorem}{Theorem}

\newtheorem{assumption}{Assumption}
\newtheorem{lemma}[theorem]{Lemma}

\newtheorem{definition}[theorem]{Definition}
\newtheorem{proposition}[theorem]{Proposition}
\newtheorem{remark}[theorem]{Remark}

 \newcommand{\trash}[1]{}
 \newcommand{\triple}{|\!|\!|}

\usepackage{amscd}

\usepackage{tensor}





\newcommand{\opnorm}[1]{{\vert\kern-0.25ex\vert\kern-0.25ex\vert #1 
  \vert\kern-0.25ex\vert\kern-0.25ex\vert}}




\newcommand{\R}{\mathbb{R}}
\newcommand{\p}{\mathbb{P}}
\newcommand{\E}{\mathbb{E}}

\newcommand{\bs}[1]{\boldsymbol{#1}}

\makeatletter
\newcommand{\leqnomode}{\tagsleft@true}
\newcommand{\reqnomode}{\tagsleft@false}
\makeatother

\def\widebarroman#1{\sbox0{#1}\dimen0=\dimexpr\wd0+1pt\relax
  \makebox[\dimen0]{\rlap{\vrule width\dimen0 height 0.06ex depth 0.06ex}%
    \rlap{\vrule width\dimen0 height\dimexpr\ht0+0.03ex\relax 
            depth\dimexpr-\ht0+0.09ex\relax}%
    \kern.5pt#1\kern.5pt}}

\allowdisplaybreaks

\usepackage{setspace}

\begin{document}

\date{\today}
\author[F. Delarue\: ; \: R. Maillet\: ; \: E. Tanré]
{François Delarue, \quad 
Raphaël Maillet, \quad
Etienne Tanré }

\address{François Delarue, Universit\'e C\^ote d'Azur, Laboratoire J.A.Dieudonn\'e, 06108 Nice, France}
\email{francois.delarue@univ-cotedazur.fr}
\address{Raphaël Maillet, Universit\'e Paris-Dauphine \& PSL, CNRS, CEREMADE, 75016 Paris, France}
\email{maillet@ceremade.dauphine.fr}
\address{Etienne Tanr\'e, Université Côte D’Azur, Inria, CNRS, LJAD, France}
\email{Etienne.Tanre@inria.fr}

\title[Ergodicity of some stochastic Fokker-Planck equations]{Ergodicity of some stochastic Fokker-Planck equations with additive common noise}

\begin{abstract} 
{In this paper we consider stochastic Fokker-Planck Partial Differential Equations (PDEs), obtained as the mean-field limit 
(i.e., as the number of particles tends to $\infty$)
of weakly interacting   particle systems 
subjected to both independent (or idiosyncratic) and common Brownian noises. 
We provide sufficient conditions under which the deterministic counterpart of the Fokker-Planck equation, 
which corresponds to particle systems that are just subjected to independent noises, 
has several invariant measures, but for which the stochastic version admits a unique 
invariant measure under the presence of the additive common noise. 
The very difficulty comes from the fact that the common noise is just of finite dimension while 
the state variable, which should be seen as the conditional marginal law of the system given the common noise, lives in 
a space of infinite dimension. In this context, our result holds true 
if, in addition to standard confining properties, the mean field interaction term forces the system to be attracted by its conditional 
mean given the common noise and the intensity of the idiosyncratic noise is small.}
\end{abstract}

\maketitle

\noindent \textbf{Mathematics Subject Classification (2020)}: 37A50, 35Q70, 60H15, 60J25

\noindent \textbf{Keywords}: 
{Non-linear Fokker-Planck equations , McKean-Vlasov equations with common noise, Stochastic Partial Differential Equations, Common noise, Asymptotic stability}

\tableofcontents

\section{Introduction}
This paper addresses the long-term behavior of solutions to a class of non-linear Stochastic Partial Differential Equations (SPDEs) of the form
\begin{equation}\label{SMKV}
	\mathrm{d}_t  m_t = \nabla \cdot \left(\frac{\sigma^2 + \sigma_0^2}{2} \nabla m_t - m_t b(\cdot, m_t)\right)\mathrm{d}t -\sigma_0 \nabla m_t \cdot \mathrm{d}B^0_t, \quad \textrm{\rm on } [0, +\infty)\times\R^d,
\end{equation}
where $B^0=(B_t^0)_{t \geq 0}$ is a $d$-dimensional Brownian motion, called \textit{common noise}. 
Above, the unknown $(m_t)_{t \geq 0}$ is regarded as a stochastic process with values in the space 
${\mathcal P}({\mathbb R}^d)$
of probability measures  on 
${\mathbb R}^d$. Accordingly, the coefficient $b$ is a function from $\R^d\times\mathcal{P}\left(\R^d\right)$ to $\R^d$ 
depending on both space and measure arguments. The two parameters $\sigma$ and $\sigma_0$ are non-negative scalars: the former
is the intensity of the so-called idiosyncratic noise and the latter is the intensity of the common noise. 

Equation \eqref{SMKV} is in fact intended to describe the flow of conditional marginal laws of the solution to the conditional McKean-Vlasov equation:
\begin{equation}\label{SMKV-X}
	\mathrm{d}  X_t = b\bigl(X_t,{\mathcal L}(X_t\vert B^0)\bigr) \mathrm{d}t +
\sigma  \mathrm{d}B_t
+
	\sigma_0 \nabla m_t \cdot \mathrm{d}B^0_t, \quad t \geq 0,
\end{equation}
where $B$ is another Brownian motion, independent of $B^0$, and ${\mathcal L}(\cdot \vert B^0)$ stands for the conditional 
law given the realization of the common noise. Formally, $m_t$ in 
\eqref{SMKV} is expected to coincide with ${\mathcal L}(X_t\vert B^0)$ in \eqref{SMKV-X}. The rigorous connection between 
\eqref{SMKV} and \eqref{SMKV-X} is addressed more carefully in the core of the paper.

\subsection{Deterministic Fokker-Planck equations} When $\sigma_0=0$, i.e., in absence of common noise, 
Equation  
\eqref{SMKV} boils down to a standard Fokker-Planck equation describing the evolution of the marginal laws 
of the solutions to the 
standard McKean-Vlasov equation \eqref{SMKV-X} obtained by replacing ${\mathcal L}(X_t\vert B^0)$
by the law of $X_t$. 
In this setting, the long-time analysis of the solutions has been a notoriously challenging problem for over twenty years, giving rise to numerous contributions, many of them still recent. While the problem is of mathematical interest in its own right, 
it also finds some application to calculus of variation since certain equations like 
\eqref{SMKV} can be interpreted as gradient flows on the space of probability measures, see the seminal works 
\cite{ambrosio2005gradient,carrillo2003kinetic,JKO,Otto1,Otto2}. 
For instance, such a gradient structure occurs when 
 $b(x,\mu) = -\nabla V(x) -\nabla W \ast\mu(x)$, for two differentiable real-valued functions $V$ and $W$ 
 on ${\mathbb R}^d$, $W$ being symmetric, where $\ast$ stands for the convolution operator. In this case, the potential lying above the dynamics writes  
\begin{equation}
\label{eq:potential:MKV} 
 \mu \in {\mathcal P}(\R^d) \mapsto \int_{\R^d} V(x)\mu({\mathrm d}x) + \frac12 \int_{\R^d} 
 \int_{\R^d} W(x-y) \mu({\mathrm d}x) \mu({\mathrm d}y) + \frac{\sigma^2}2 \int_{\R^d} 
 \ln \bigl( \frac{{\mathrm d} \mu}{{\mathrm d} x}(x) \bigr) 
 \mu({\mathrm d}x).
 \end{equation} 

In fact, due to the mean field interaction (whether it derives from a potential or not), 
 the sole presence of the noise $B$ 
in the dynamics \eqref{SMKV-X} (with $\sigma_0=0$) does not suffice to guarantee the uniqueness of an invariant measure
(or equivalently of a stationary solution to \eqref{SMKV}) under  quite general 
conditions on $b$. This is in contrast with the long run behavior of
 Stochastic Differential Equations (SDEs) (or 
equivalently linear Fokker-Planck equations), 
for which mere confining properties of the drift suffice to produce ergodicity (provided $\sigma >0$). 
In the mean field setting, some further structural conditions are necessary. In the potential example 
\eqref{eq:potential:MKV}, a standard condition is to demand $V$ and $W$ to be (strongly) convex, 
which forces in fact a form of convexity of the potential 
\eqref{eq:potential:MKV} on the space of probability measures. We refer for instance to  
the earlier works
\cite{benachour1998nonlinear,carrillo2003kinetic,malrieu2001logarithmic,malrieu2003}, which also include 
additional convergence results to the stationary regime. 
\textcolor{black}{For a tiny list of variants, in which \textit{perturbations} of the convex potential case are addressed, we refer to 
\cite{bashiri2020long,bolley2013uniform,butkovsky2014ergPropNLMCandMKV,delmoral2019uniform,durmus2020elementary}.
We refrain from detailing all the possible extensions of the previous cases under which 
the invariant measure remains unique. Let us just say, for our purposes, that the above list of 
references includes cases where the lack of convexity is compensated for by the presence of a strong enough diffusion coefficient.
This \textit{large noise} regime 
is compared with our own setting in Remark 
\ref{rem:choice:parameters} below. At this point, we would especially like to stress} that there are known 
 explicit simple cases in which uniqueness does not hold, which observation supports our previous claim: the presence of a nonlinear mean field term may \textit{easily} lead to the existence of multiple invariant measures, even in situations where the mass stays confined under the dynamics \eqref{SMKV}. Specifically, it has been proven in \cite{herrmann2010non} 
 (see also \cite{{1983JSP....31...29D}})
 that, when $V$ is only uniformly convex outside of a ball  but admits a double-well, $W$ is quadratic (but non-zero) and the diffusion coefficient $\sigma$ is sufficiently small, there exist several stationary solutions 
 to 
\eqref{SMKV}
 when it derives from the potential \eqref{eq:potential:MKV}
  (and $\sigma_0=0$). 

\subsection{Common noise}
Very basically, our primary objective in this article is to revisit the class of examples addressed in 
\cite{herrmann2010non} , but in presence of a common noise, i.e., $\sigma_0>0$. In this regard, it is worth noting that,
when 
  $b(x,\mu) = -\nabla V(x) -\nabla W \ast\mu(x)$, with $V$ being uniformly convex and $W$ being convex, 
  the common noise does not change the picture recalled in the previous paragraph. Indeed, the second author has 
  shown in \cite{maillet2023note} that \eqref{SMKV} has a unique invariant measure, 
  which is consistent with the results obtained earlier without common noise with the slight subtlety that the invariant measure is then  understood 
  as a probability measure on ${\mathcal P}(\R^d)$.
  
  Of course, our goal is to go one step beyond and prove that the common noise can change the picture positively, 
  meaning that it can force uniqueness of the invariant measure even though 
  the deterministic analogue of \eqref{SMKV} (i.e., with 
  $\sigma_0=0$) has several stationary solutions.
  Actually, this type of result, if it holds true, must be part of the very broad theory 
  of ergodic Markov processes, with the specific feature that the state space here is
  ${\mathcal P}(\R^d)$. It is worth observing that, from the same point of view,  
    the deterministic and linear analogue of \eqref{SMKV-X} (i.e., $b$ only depends on 
    $x$ and $\sigma_0=0$) should be regarded as a Fokker-Planck equation obtained by forcing a standard ordinary
    differential equation by a Brownian motion of intensity $\sigma$. This is just to say that, in the Euclidean setting, 
    the same program is very well-understood and just consists in addressing the ergodic properties of 
    a non-degenerate SDE. 
   For sure, the very difficulty in our setting comes from the fact that the unknown in 
    \eqref{SMKV} lives in a space of infinite dimension (once again, the space of probability measures)
    while the noise (i.e., $B^0$) is just finite-dimensional and thus \textit{completely} degenerate. This says that, at best, 
    noise can be expected to restore uniqueness of the invariant measure only in specific cases.
     And, precisely, this is our objective to identify a class of cases that would become ergodic under the action of the common noise and that would include
some of the examples addressed in \cite{herrmann2010non}. 
    
    In fact, this question was already addressed by the second author in 
     \cite{maillet2023note} when $\sigma=0$, leaving open setting with $\sigma >0$. Apart from 
     this, the idea of using the averaging properties of the common noise has been used in several contexts, for instance in the analysis 
     of existence and uniqueness of equilibria to mean field games (see for instance \cite{delarue2020selection,Foguen}
     for games  with a finite dimensional common noise and  \cite{delarue2019restoring}
     for games with an infinite dimensional common noise). More recently, the first author has just released 
     an article, \cite{delarue2024rearranged}, in which the ergodic properties of 
     \eqref{SMKV} (or, more precisely, \eqref{SMKV-X}) are studied when $b$ is general but the common noise
     is infinite dimensional and the dimension $d$ is 1. Part of the challenge then precisely lies in the construction of the noise, which is a question 
     different from the one addressed here. A similar problem was addressed in the prior work \cite{angeli2023mckean}, for another type of infinite dimensional common noise that may not preserve the space of probability measures.

\subsection{Our contribution}
We here focus on drifts of the type 
\begin{equation*}
	b(x,\mu) = G(x) + F\left(x-\int_{\R^d} x \: \mu(\mathrm{d}x)\right), \quad x \in {\mathbb R}^d, \ \mu \in {\mathcal P}(\R^d), 
\end{equation*}
where $F$ and $G$ are functions from $\R^d$ into itself. The function $F$ is assumed to be strictly decreasing, 
which occurs for instance if $F=-\nabla W$ for a strictly convex function. More interestingly, $G$ is just assumed to be 
confining in a mild sense that is detailed in Assumption~\ref{A1} below. In particular, $G$ is not required
to be strictly decreasing and, when it derives from a potential $V$, $V$ may not be convex.  
Therefore, this framework allows us to choose $G= -\nabla V$ for a non-convex potential $V$ and $F = - \nabla W$, for $W(x) = \alpha x^2$ for some $\alpha > 0$,
and thus contains the case treated by  \cite{herrmann2010non} (up to an additional common noise), since 
$b(x,\mu)$  rewrites $-\nabla V(x) -\nabla W \ast\mu(x)$. 

Our main contribution is to establish that, for a small diffusion coefficient $\sigma$ and for an interaction force $F$ that is sufficiently decreasing, the system has a unique invariant measure, which attracts exponentially fast any other initial condition (with appropriate integrability properties). Our 
strategy of proof is based on the key idea that the long run behavior of \eqref{SMKV-X} should be dictated by the 
long time dynamics of the conditional mean ${\mathbb E}[X_t \vert B^0]$ given the common noise. However, this intuition is not completely correct, as 
the distance between
$X_t$ and its conditional mean ${\mathbb E}[X_t \vert B^0]$  does not tend to $0$ in long time, at least when 
$\sigma >0$. In fact, some residual fluctuation persists due to the presence of the idiosyncratic noise and this is one of our 
main achievement here to explain how to handle this residual fluctuation in long time. This is precisely the point where we need 
$\sigma$ to be small and the interaction force $F$ to be sufficiently decreasing. As for the function $G$, it mainly plays a role 
in the long time analysis of the conditional mean ${\mathbb E}[X_t \vert B^0]$ itself, which is shown to 
share many similarities with 
the long time analysis of the (standard) SDE driven by the drift $G$ and the noise $\sigma_0 B^0$. 
In particular, we draw heavily on previous works on coupling methods for long time analysis of SDEs, especially the approach developed in \cite{eberle2016reflection} on couplings by reflection (inspired from \cite{lindvall1986coupling}), which plays a key role in our study.

Notice that our result complements the earlier one \cite{maillet2023note}
dedicated to the case $\sigma=0$. 

	\subsection{Further connections with the literature on mean field models}
Stochastic PDEs
like \eqref{SMKV}
and related conditional McKean-Vlasov equations of 
type 
\eqref{SMKV-X} were introduced in 
\cite{dawson1995stochastic,kurtz1999particle}. 
Further, they were studied in a series of works 
\cite{lions2013scalar,lions2014scalar, gess2014scalar, friz2016stochastic, gess2017stochastic, fehrman2019well}
in connection with stochastic scalar conservations laws, in which case
$\sigma_0$ is typically required to depend on the local value at point $x$ of ${\mathrm d} m_t/{\mathrm d}x$.
The more recent contribution 
\cite{coghi2019stochastic} addresses uniqueness to \eqref{SMKV} under weaker conditions than in \cite{kurtz1999particle}.
Equation \eqref{SMKV} has also become very popular in 
mean field game theory, see for instance the book \cite{cardaliaguet2019master}. 
A variant, including a reflection term, has been studied recently in  \cite{briand2020forward}.  
As for \eqref{SMKV-X}, we refer to \cite{hammersley2021weak} for a general existence and uniqueness result of weak solutions. 

Last but not least, it is worth recalling that the connection between 
nonlinear Fokker-Planck equations and McKean-Vlasov equations (without common noise) goes back to the pioneering 
works of  \cite{kac1956foundations,mckean1966class,funaki1984certain}. 
In this context, a key question concerns the particular approximation of the solutions, usually referred to 
as \textit{propagation of chaos}, see for instance 
 \cite{sznitman1991topics,meleard_1996}. In connection with 
 the existence of stationary solutions to the Fokker-Planck equation 
 \eqref{SMKV}, it is in general a difficult question to wonder whether 
 propagation of chaos holds uniformly in time or not. We feel better not to give a list of  
 references in this direction but we quote that the question appears for instance in the earlier contribution 
 \cite{cattiaux2008probabilistic}. Obviously, this would be very interesting to address the same 
 problem but in presence of a common noise for the model studied here. 
%
	
	\subsection{Organisation of the paper}
	In Section~\ref{sec:main}, we list the assumptions and give the main statements of the paper. 
	We also address some examples and compare in particular our results with those from \cite{herrmann2010non}
	(when $\sigma_0=0$ and uniqueness does not hold). The proof of the main theorem (Theorem~\ref{th:main}) is split in two parts. 
	A first step is to derive Theorem~\ref{th:main} from a key auxiliary estimate (Proposition 
	\ref{prop:p1})  on the contraction properties of the 
	semigroup induced by the solution of \eqref{SMKV}. 
	This is done in Section~\ref{sec:th1}.
	A second step, which is in fact the core of the paper, is to prove this key estimate. 
	We do so in Section~\ref{proofP1}, using coupling arguments. Auxiliary results are proven in Appendix.
	

\color{black}
	\subsection{Notation}\label{sec:notation}
	
Throughout the paper, for a Polish space $E$, $\mathcal{P}(E)$ stands for the space of Borel probability measures on $E$ equipped with the topology of weak convergence and the corresponding Borel $\sigma$-algebra {(which is induced by the 
mappings $m \in {\mathcal P}(E) \mapsto m(A)$ for any Borel subset $A$ of $E$)}.
{Whenever there exists a distance $d$ so that $(E,d)$ is a metric space, 
we call ${\mathcal P}_p(E)$, for any $p>0$, the collection of elements $\mu \in {\mathcal P}_p(E)$ such that 
\begin{equation*}
	\exists x_0 \in E : \int_E d(x_0,x)^p \mu(\mathrm{d}x) < +\infty.
\end{equation*}
In fact, the integral above is finite or not, whatever the choice of $x_0$. We then} define for any $p\geq 1$, the $p$-Wasserstein distance on 
$\mathcal{P}_p(E)$ as
\begin{equation*}
	\mathrm{d}_p^E(\mu, \nu) := \inf_{\pi \in \Pi(\mu, \nu)} \left(\int_{E\times E} d(x,y)^p \: \pi(\mathrm{d}x, \mathrm{d}y)\right)^{1/p},
	\quad \mu,\nu \in {\mathcal P}_1(E), 
\end{equation*}
where $\Pi(\mu, \nu)$ stands for the set of all couplings of $\mu$ and $\nu$ (i.e., all the joint
probability measures on $E \times E$ with $\mu$ and $\nu$ as first and second marginals). 

Also, for any {measurable $\varphi : E \rightarrow {\mathbb R}$} and any probability measure $m \in \mathcal{P}(E)$, we define the duality product $\langle\cdot\: ;\cdot\rangle_E$ as 
\begin{align*}
    \langle m ; \varphi\rangle_E = \int_{E} \varphi(x) m(\mathrm{d}x),
\end{align*} 
{whenever the integral in the right-hand side does exit (for instance so is the case if 
$\varphi$ is bounded or $\varphi$ is at most of linear growth and $m \in {\mathcal P}_1(E)$ and so forth...).}
{When $E= \R^d$, we define}
\begin{align*}
	& \mu_1(m) := \int_{\R^d} x \: m(\mathrm{d}x), \quad {m \in {\mathcal P}_1(\R^d)}; \\
	& \mu_2(m) :=  \int_{\R^d} |x|^2 \: m(\mathrm{d}x),  \quad {m \in {\mathcal P}_2(\R^d)}; \\ 
	& v(m) := \int_{\R^d} \left| x - \mu_1(m)\right|^2\: m(\mathrm{d}x),
	 \quad {m \in {\mathcal P}_2(\R^d)}; 
\end{align*}
which stand respectively for the expectation, the moment of order two and the variance of $m$. It is important to pay attention to the fact that despite the similarities in the notation, the objects $\mu_1(m) \in \R^d$ and $\mu_2(m) \in \R$ are of different {dimensions}. 

{When $m$ is random, say is a measurable mapping from 
$(\Omega_0, \mathcal{F}^0, \mathbb{P}_0)$ into ${\mathcal P}(E)$ (or  ${\mathcal P}_1(E)$), its law $P$ is an element of
${\mathcal P}({\mathcal P}(E))$ (or ${\mathcal P}({\mathcal P}_1(E))$), in the sense that, for any bounded measurable function $\phi : \mathcal{P}(E) \to \R$, $\: \E_{0}[\phi(m)] = \langle P ; \phi   \rangle_{\mathcal{P}(E)}$
(and similarly, when working on  $\mathcal{P}_1(E)$), where
$\E_0$ is the expectation associated with ${\mathbb P}_0$. 
Repeatedly in this paper, we also consider continuous stochastic processes} $(m_t)_{t\geq 0}$, typically constructed on 
$(\Omega_0, \mathcal{F}^0, {\mathbb F}^0,\mathbb{P}_0)$, 
 with values in the space {$\mathcal{P}_1\left(\R^d\right)$ equipped with $\mathrm{d}_1^{\R^d}$ (and thus also in 
${\mathcal P}(\R^d)$, equipped with the weak convergence topology)}. 
{When $(m_t)_{t \geq 0}$ is Markovian, we define its semigroup $(\mathscr{P}_t)_{t\geq 0}$ by letting, for any function $\phi : \mathcal{P}(\R^d) \to \R$ and  any $t \geq 0$,} 
\begin{equation*}
	\mathscr{P}_t\phi : \
	\begin{array}{l} \mathcal{P}(\R^d) \to \R
	\\ m \mapsto \E_0[\phi(m_t) | m_0 = m] . 
	\end{array}
\end{equation*}

Moreover, for any twice differentiable function $f : \R^d \to \R$,  $\nabla f$, $\Delta f$ and $\nabla ^2 f$ respectively stand for the gradient, {Laplacian} and Hessian matrix of $f$. For any differentiable function $F : \R^d \to \R^d$, we denote by $DF : \R^d \to \mathcal{M}_d(\R)$ its Jacobian matrix.
 
{For any real squared matrix $A \in \mathcal{M}_d(\R)$, we denote the Euclidian norm of $A$ by 
	\[\triple A\triple = \sqrt{\text{Tr}(A^\top A)},\] 
		with $A^\top$ and $\text{Tr}$ respectively standing for the transpose matrix of $A$ and the Trace operator on $\mathcal{M}_d(\R)$. 
	Lastly, the Euclidean norm on $\R^d$ is denoted $\vert \cdot \vert$ and the inner product $\cdot$. }


\section{Main result: uniqueness recovery thanks to common noise}\label{sec:main}

\subsection{Stochastic Fokker-Planck PDE and Conditional McKean-Vlasov SDE} 
In this paper, we {focus on a mean field model with common noise} in which the nonlinearity in the dynamics 
{occurs} through the {mean (i.e., the expectation). To make it clear,
on a filtered probability space 
$\left(\Omega_0, \mathcal{F}^0, \mathbb{F}^0, \mathbb{P}_0\right)$ (satisfying the usual conditions) equipped with a $d$-dimensional $\mathbb{F}^0$-Brownian 
motion $
B^0$ and for 
$F,G : \R^d \to \R^d$ two interaction and confinement forces,} we are interested in the solutions of the following Stochastic Partial Differential Equation (SPDE):
\begin{equation}\label{eq:SFKP:sigma0}
	\mathrm{d}_t  m_t = \nabla \cdot \left(\frac{\sigma^2 + \sigma_0^2}{2} \nabla m_t - m_t \left( G + F(\cdot - \mu_1(m_t))\right)\right)\mathrm{d}t -\sigma_0 \nabla m_t \cdot \mathrm{d}B^0_t, \quad t \geq 0, 
\end{equation}
{which is understood the weak sense, i.e.,}  the process $(m_t)_{t\geq 0}$ satisfies for all $t \geq 0$ and $\varphi \in C_c^{\infty}(\mathbb{R}^d)$,
\begin{align*}
\mathrm{d}\langle m_t, \varphi\rangle=\langle m_t, {\mathscr L}_{m_t} \varphi \rangle \: \mathrm{d} t + \sigma_0\langle m_t,(\nabla \varphi)^{\top}\rangle \: \mathrm{d} B^0_t, 
\end{align*}
where for any probability measure $m \in {\mathcal{P}_1}(\R^d)$, the operator {${\mathscr L}_{m}$} acts on a smooth function $\varphi$ with compact support in the following manner:
\[
{\mathscr L}_{m} \varphi = - \left(G  + F(\cdot - \mu_1(m) \right) \cdot \nabla \varphi+\frac{\sigma_0^2+\sigma^2}{2} \Delta \varphi.
\]
Equation \eqref{eq:SFKP:sigma0} admits a Lagrangian representation in the form of conditional McKean-Vlasov equation. Throughout the article, 
this McKean-Vlasov equation is constructed on a product probability space, obtained by tensorizing  
the filtered probability space 
$(\Omega_0, \mathcal{F}^0, \mathbb{F}^0, \p_0)$
(on which Equation 
\eqref{eq:SFKP:sigma0} is defined) with another filtered probability space $(\Omega_1, \mathcal{F}^1, \mathbb{F}^1, \p_1)$. The product structure is denoted 
\begin{align*}
	(\Omega := \Omega_0\times \Omega_1, \:\: \mathcal{F}, \:\: \mathbb{F}, \:\: \p),
\end{align*}
where  $(\mathcal{F}, \p)$ is the completion of $(\mathcal{F}^0 \otimes \mathcal{F}^1, \p_0\otimes \p_1)$ and $\mathbb{F}$ is the right continuous augmentation of $( \mathcal{F}^0_t \otimes \mathcal{F}^1_t)_{t \geq 0}$. In this paper we denote by $\E_0$, $\E_1$ and $\E$ the expectations on $(\Omega_0, \mathcal{F}^0, \mathbb{F}^0, \p_0)$, $(\Omega_1, \mathcal{F}^1, \mathbb{F}^1, \p_1)$ and $(\Omega, \mathcal{F}, \mathbb{F}, \p)$ respectively. Analogously, for any random variable $X$ defined on $(\Omega, \mathcal{F}, \mathbb{F}, \p)$, we denote by $\mathcal{L}_0(X)$ and $\mathcal{L}_1(X)$ the conditional laws of the random variable $X$ given $\mathcal{F}^1$ and $\mathcal{F}^0$ respectively. Of course, $\mathcal{L}(X)$ stands for the law of $X$ on the whole product space. 

Next, 
the initial law of 
\eqref{eq:SFKP:sigma0} may be random and then distributed according to a probability measure 
$P_0 \in \mathcal{P}(\mathcal{P}_1(\R^d))$. 
The initial condition
$m_0$ is then defined as an $\mathcal{F}^0_0$-measurable random variable with values in 
$\mathcal{P}_1(\R^d)$ such that $\mathcal{L}(m_0) = P_0$. We can now define on the whole probability space $(\Omega, \mathcal{F}, \mathbb{F}, \mathbb{P})$ a random variable $X_0$ such that $\mathcal{L}_1( X_0) = m_0$ almost surely. Precisely, 
by Lemma 2.4 in \cite{CD2018_2}, for $\mathbb{P}^0$-a.e. $\omega_0\in\Omega_0$, $X_0(\omega_0, \cdot)$ is a random variable on $(\Omega_1, \mathcal{F}^1, \mathbb{P}_1)$ and the conditional law of $X_0$ given $\mathcal{F}^0$, $\mathcal{L}_1 : \Omega_0 \ni \omega_0 \mapsto \mathcal{L}(X(\omega_0, \cdot))$, defines a random variable from $(\Omega_0, \mathcal{F}^0, \mathbb{P}_0)$ into $\mathcal{P}(\R^d)$, which can be taken equal to $m_0$ almost surely. 

In addition to $B^0$, we consider a $d$-dimensional ${\mathbb F}^1$-Brownian motion $B$ supported by
the space $(\Omega_1, \mathcal{F}^1, \p_1)$. The Lagrangian formulation of 
\eqref{eq:SFKP:sigma0} then reads in the form of the following conditional McKean-Vlasov equation, 
set on $(\Omega,{\mathcal F},{\mathbb F},{\mathbb P})$:
\begin{equation}\label{LP}
	\mathrm{d}X_t = G(X_t)\mathrm{d}t + F\left(X_t - \E_1[X_t]\right) \mathrm{d}t + \sigma \mathrm{d}B_t + \sigma_0 \mathrm{d} B^0_t, 
	\quad t \geq 0,
\end{equation}
with $X_0$ as initial condition. 
\color{black}

	\subsection{Assumptions, existence and uniqueness of solutions}
In the sequel, we make the following assumptions on {the coefficients $F$ and $G$. We start with $G$}:
\begin{assumption}\label{A1}
	The drift $G : \R^d \to \R^d$ is differentiable in $\R^d$. Moreover, 
	\begin{itemize}
		\item[{\rm (A1.1)}] $G$ is confining in the sense that there exists a function $\kappa : [0; +\infty) \to \R$ 
		 {such that} 
		\begin{equation*}
{\forall x,y \in {\mathbb R}^d,} \quad 
			\left(G(x) - G(y)\right)\cdot(x-y) \leq -\frac{\sigma_0^2}{2}\kappa(|x-y|)|x-y|^2, 
		\end{equation*} 
		{with}
		\begin{equation*}
		\lim\sup_{r \to +\infty}\kappa(r) > 0 \quad 
		\textrm{\rm and} \quad 
			\int_0^1 r \kappa(r)^{-} \mathrm{d} r<\infty, 
		\end{equation*}
		where $\kappa^{-} = \max(0, -\kappa)$. 
		\item[{\rm (A1.2)}] $G$ is Lipschitz-continuous on $\R^d$, meaning that there exists a 
		constant $L_G > 0$ such that  
		\begin{equation*}
		{\forall x,y \in {\mathbb R}^d,} \quad	|G(x) - G(y)| \leq L_G|x-y|. 
		\end{equation*}
		\item[{\rm (A1.3)}] $G$ is differentiable and its Jacobian matrix $DG$ is Lipschitz-continuous on $\R^d$, meaning that there exists a constant $C_G > 0$ such that for any $x,y \in \R^d$, 
		\begin{equation*}
			\triple DG(x) - DG(y)\triple \leq C_G|x-y|.
		\end{equation*}
	\end{itemize}
\end{assumption}

	{In order to state properly the assumption on $F$}, we introduce the following definition:
\begin{definition}\label{D1}
	For a pair $(\alpha, C) \in \R\times [0, +\infty)$, we call $\mathcal{S}(\alpha, C, L)$ the collection of $L$-Lipschitz-continuous 
	and differentiable functions $H$
	{from $\R^d$ into itself that are $\alpha$-decreasing and whose derivative is $C$ 
	Lipschitz-continuous, i.e.}, for all $(x,y) \in \R^{2d}$,
	\begin{itemize}
		\item[{(i)}] $(x-y) \cdot (H(x)  - H(y)) \leq -\alpha |x-y|^2$;
		\item[{(ii)}] $\triple DH(x) - DH(y)\triple \leq C |x-y|.$
	\end{itemize}
\end{definition}
{Given Definition~\ref{D1}}, we make the following assumption on $F$:
\begin{assumption}\label{A2}
	$F : \R^d \to \R^d$ satisfies $F(0) =0$. Moreover, there exist $\alpha_F > 0$ and $C_F \geq 0$ such that $ F\in \mathcal{S}(\alpha_F, C_F, C_F)$. 
\end{assumption}

\subsubsection*{About Assumption~\ref{A1}} The following remarks about Assumption~\ref{A1} are in order:
\begin{itemize}
\item Assumption~\ref{A1} is quite intuitive and consistent with the ideas presented in \cite{herrmann2010non}. Essentially, this assumption guarantees that \(G\) is confining and behaves linearly at infinity. Also, it is worth noting that our approach can accommodate a force that derives from a non-convex potential, which, around the origin, behaves for example like \(x \mapsto |x|^4 - |x|^2\).
\vskip 4pt

\item When $\sigma_0 > 0$, there exists a canonical choice of $\kappa$, given by
\begin{equation*}
	\kappa(r) {:=} \inf \left\{-\frac{2}{\sigma_0^2}\frac{(x-y) \cdot (G(x)-G(y))}{|x-y|^2} {;} \quad x, y \in \mathbb{R}^d \text { s.t. } {\vert x-y\vert}=r\right\}, \quad {r >0}. 
\end{equation*}
Assumption~\ref{A1} says that $\kappa$ is {necessarily} positive outside of a ball. This implies in particular that $\sigma_0^2\kappa^-/2$ is bounded from below by some constant $m_G \in \R$, which gives   
\begin{equation}\label{eq:lowerboundkappa}
	(x-y) \cdot (G(x)-G(y)) \leq -m_G|x-y|^2, \quad {x,y \in {\mathbb R}^d}. 
\end{equation}
This implies in particular that $G \in \mathcal{S}(m_G, C_G, L_G)$. 
\vskip 4pt 

\noindent When $m_G > 0$ we say that the drift is {(strictly) decreasing}. This case is easier to study under similar conditions on $F$ because the monotone structure of $G$ ensures that two solutions of the SDE \eqref{LP} (with different initial conditions) get closer in large time.  In this context, uniqueness of the invariant measure of the solutions of Equation \eqref{eq:SFKP:sigma0} is covered in \cite[Section 2]{maillet2023note}, {but it is fair to say that the common noise then plays little role because the invariant measure is also unique when 
$\sigma_0=0$. Notice that in the specific case when $G$ derives from a potential, 
this potential is strictly convex under the condition $m_G>0$.}
\vskip 4pt
\end{itemize}

\subsubsection*{About Assumption~\ref{A2}} Assumption ~\ref{A2} is somewhat surprising, 
{especially as we are asking below the coefficient $\alpha_F$ to
be large (see the statement of Theorem 
\ref{th:main}). Intuitively, one might indeed} expect that uniqueness of the invariant measure becomes less likely if the interaction is strong.
{At least, this is exactly what happens in absence of common noise, 
the extreme case being $F \equiv 0$ in which the dynamics become a mere diffusion equation
(the long time analysis of which is much easier to study).}
 However, {the situation is different here, thanks to the peculiar structure of the dynamics
 and to the presence of the common noise ($\sigma_0 > 0$). In brief, $F$ has a contracting effect, which forces 
 the process to be attracted by its conditional expectation (given the common noise), with the latter being a nearly solution of an ergodic SDE. Even though this picture is not exactly correct due to some minor errors caused by the idiosyncratic noise, 
 this is indeed a key step in our analysis to quantify the accuracy of this approximation and to derive from it uniqueness of the invariant measure.} We refer to Section~\ref{subsec:31}.
\vskip 4pt

{Throughout, we work under Assumptions~\ref{A1} and~\ref{A2}. In this context, we claim
\begin{proposition} 
\label{prop:existence:!}
Under Assumptions~\ref{A1} and~\ref{A2} and for an initial condition $X_0$ satisfying 
${\mathbb E}[\vert X_0 \vert]< \infty$ (or equivalently 
${\mathbb E}_0 \int_{{\mathbb R}^d} \vert x \vert m_0(\mathrm{d}x) < \infty$
for $m_0:= {\mathcal L}_0(X_0)$), 
the conditional McKean-Vlasov equation 
\eqref{LP} has a unique 
${\mathbb F}$-progressively measurable solution
$(X_t)_{t \geq 0}$, with continuous trajectories, such that, for 
all $T>0$, ${\mathbb E} [\sup_{0 \le t \le T} \vert X_t\vert] < \infty$. 
\end{proposition}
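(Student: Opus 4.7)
The plan is to prove well-posedness of \eqref{LP} by a Picard fixed-point argument applied to the conditional-mean process $t \mapsto \E_1[X_t]$, which is the only source of nonlinearity in the dynamics. Fix $T>0$ and introduce the Banach space
\begin{equation*}
\mathcal{H}_T := \Bigl\{ m : [0,T] \times \Omega_0 \to \R^d \ \mathbb{F}^0\text{-progressive, pathwise continuous, with }\ \|m\|_{\lambda,T} := \E_0\Bigl[\sup_{0 \le t \le T} e^{-\lambda t}|m_t|\Bigr] < + \infty \Bigr\},
\end{equation*}
where $\lambda > 0$ is to be tuned. For a given $m \in \mathcal{H}_T$, the frozen SDE
\begin{equation*}
\mathrm{d} X_t^m = G(X_t^m)\, \mathrm{d}t + F(X_t^m - m_t)\, \mathrm{d}t + \sigma\, \mathrm{d} B_t + \sigma_0\, \mathrm{d} B_t^0, \quad X_0^m = X_0,
\end{equation*}
has a drift that is $\mathbb{F}$-progressively measurable in $(t,\omega)$ and globally Lipschitz, of constant $L_G + C_F$, in the spatial variable (by Assumptions~\ref{A1}(A1.2) and~\ref{A2}). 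Classical strong well-posedness for SDEs with random Lipschitz coefficients therefore yields a unique $\mathbb{F}$-adapted continuous solution $X^m$. The Picard map is then defined by $\Phi(m)_t := \E_1[X_t^m]$.

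I would next show that $\Phi$ stabilizes $\mathcal{H}_T$ and is a strict contraction for $\lambda$ large. Using the linear-growth bounds $|G(x)| \le |G(0)| + L_G|x|$ and $|F(x-m)| \le C_F(|x|+|m|)$ derived from $F(0)=0$, a Gronwall argument combined with the Burkholder--Davis--Gundy inequality for the martingale $\sigma B + \sigma_0 B^0$ gives
\begin{equation*}
\E\Bigl[\sup_{0\le t\le T}|X_t^m|\Bigr] \le C_T\bigl(1 + \E[|X_0|] + \|m\|_{0,T}\bigr) < +\infty,
\end{equation*}
so that $\|\Phi(m)\|_{\lambda,T} \le \E[\sup_{[0,T]}|X_t^m|]$ by Jensen, whence $\Phi(m)\in\mathcal{H}_T$. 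For two inputs $m,m' \in \mathcal{H}_T$, the pathwise estimate
\begin{equation*}
|X_t^m - X_t^{m'}| \le (L_G + C_F) \int_0^t |X_s^m - X_s^{m'}|\, \mathrm{d}s + C_F \int_0^t |m_s - m'_s|\, \mathrm{d}s
\end{equation*}
and Gronwall produce $|X_t^m - X_t^{m'}| \le C_F \int_0^t e^{(L_G+C_F)(t-s)} |m_s - m'_s|\, \mathrm{d}s$. Taking $\E_1$ (which leaves $m_s - m'_s$ unchanged, being $\mathcal{F}^0_s$-measurable), multiplying by $e^{-\lambda t}$, taking the supremum over $t \in [0,T]$, and finally taking $\E_0$, one arrives at
\begin{equation*}
\|\Phi(m) - \Phi(m')\|_{\lambda,T} \le \frac{C_F}{\lambda - L_G - C_F}\, \|m - m'\|_{\lambda,T},
\end{equation*}
which is a strict contraction as soon as $\lambda > L_G + 2 C_F$. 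Banach's fixed-point theorem then yields a unique $m^* \in \mathcal{H}_T$, and $X := X^{m^*}$ solves \eqref{LP} by construction. Conversely, any solution of \eqref{LP} with $\E[\sup_{[0,T]}|X_t|] < \infty$ has a conditional mean lying in $\mathcal{H}_T$ that must be a fixed point of $\Phi$, so uniqueness follows. Patching on successive intervals $[kT,(k+1)T]$ extends everything globally.

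The main technical obstacle I anticipate is verifying that $\Phi(m)$ admits a pathwise continuous, $\mathbb{F}^0$-progressive version with finite $\mathcal{H}_T$-norm: progressive measurability reduces, via Fubini on the product space, to the $\mathcal{F}^0_t \otimes \mathcal{F}^1_t$-measurability of $X_t^m$, while continuity of $t \mapsto \E_1[X_t^m]$ follows from dominated convergence using the $L^1$ bound on $\sup_{[0,T]}|X_t^m|$. A secondary point worth highlighting is that the weak integrability assumption $\E[|X_0|]<\infty$, rather than the $L^2$ hypothesis customary in McKean--Vlasov theory, is permissible here because the global Lipschitz property of $G$ and $F$ makes every Gronwall and BDG estimate respect the $L^1$ scale, so no squared moment is ever required. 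Note that Assumption~\ref{A1}(A1.1) (the confining condition) and Assumption~\ref{A1}(A1.3) (Lipschitz Jacobians) play no role in this well-posedness step and will only be used in the subsequent long-time analysis.
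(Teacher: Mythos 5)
Your argument is correct and is precisely the "straightforward fixed point argument using the Lipschitz properties of $F$ and $G$" that the paper invokes (citing \cite[Chap.~2]{CD2018_2}); the contraction estimate, the $L^1$ framework, and the reduction to the $\R^d$-valued conditional-mean process all check out. The only refinement worth noting is that you iterate Picard on the scalar process $t\mapsto \E_1[X_t]$ rather than on the measure-valued flow $t\mapsto\mathcal{L}_1(X_t)$ as in the general CD2018 treatment — a legitimate simplification here since the coefficients depend on the measure only through its mean.
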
}

{\begin{proof}
The proof consists in a straightforward fixed point argument, using the 
Lipschitz properties of the two coefficients $F$ and $G$. We refer to~\cite[Chap. 2]{CD2018_2}. 
\end{proof}}

{By the superposition principle for conditional McKean-Vlasov equations (see 
\cite{lacker2022superposition}), we deduce that existence and uniqueness also hold true for the 
stochastic Fokker-Planck equation: 
\begin{proposition}
Under Assumptions~\ref{A1} and~\ref{A2} and for an initial condition $m_0$ satisfying  
${\mathbb E}_0 \int_{{\mathbb R}^d} \vert x \vert m_0({\mathrm d}x) < \infty$, 
the stochastic Fokker-Planck equation 
\eqref{eq:SFKP:sigma0} has a unique solution $(m_t)_{t \geq 0}$
in the space of ${\mathbb F}^0$-progressively measurable processes 
with values in ${\mathcal P}_1(\R^d)$ satisfying 
${\mathbb E}_0[\sup_{0 \leq t \leq T} \int_{{\mathbb R}^d} \vert x \vert 
m_t(
{\mathrm d}x)] < \infty$. 
\end{proposition}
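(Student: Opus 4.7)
The plan is to derive both existence and uniqueness from Proposition~\ref{prop:existence:!} combined with the conditional superposition principle of \cite{lacker2022superposition}, as the cited reference and the phrasing of the proposition strongly suggest.

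For existence, I would first construct a candidate solution by lifting the process $X$ from Proposition~\ref{prop:existence:!}. Starting from $m_0$ as in the statement, I build an $\mathcal{F}^0_0$-measurable version of it on the product space $(\Omega,\mathcal{F},\mathbb{F},\mathbb{P})$ and produce an initial $X_0$ with $\mathcal{L}_1(X_0)=m_0$ (as described in the paragraph before \eqref{LP}) so that $\mathbb{E}[|X_0|]<\infty$. Proposition~\ref{prop:existence:!} then yields a unique continuous $\mathbb{F}$-progressive solution $(X_t)_{t \geq 0}$ with $\mathbb{E}[\sup_{t\le T}|X_t|]<\infty$. I then set $m_t := \mathcal{L}_1(X_t)$, which is automatically $\mathbb{F}^0$-progressively measurable and takes values in $\mathcal{P}_1(\mathbb{R}^d)$; the required moment bound $\mathbb{E}_0[\sup_{t\le T}\int |x|\, m_t(\mathrm{d}x)] \leq \mathbb{E}[\sup_{t\le T}|X_t|]<\infty$ follows immediately from Fubini. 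To check the weak form of \eqref{eq:SFKP:sigma0}, I would apply Itô's formula to $\varphi(X_t)$ for $\varphi\in C_c^\infty(\mathbb{R}^d)$, split the noise $\mathrm{d}B_t$ (idiosyncratic, on $\Omega_1$) from $\mathrm{d}B^0_t$ (common, on $\Omega_0$), and then take conditional expectation $\mathbb{E}_1[\,\cdot\,]$, i.e.\ integrate against $\mathbb{P}_1$. The $\mathrm{d}B_t$ term vanishes (being a martingale in the $\Omega_1$-direction, with $B^0$ frozen), the $\mathrm{d}B^0_t$ term produces $\sigma_0\langle m_t,(\nabla\varphi)^\top\rangle\,\mathrm{d}B^0_t$, and the drift plus second-order terms give $\langle m_t,\mathscr{L}_{m_t}\varphi\rangle\,\mathrm{d}t$, exactly the weak form.

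For uniqueness, I would invoke the conditional superposition principle of \cite{lacker2022superposition}: any $\mathbb{F}^0$-progressively measurable weak solution $(m_t)$ to \eqref{eq:SFKP:sigma0} with the stated integrability can be represented as $m_t=\mathcal{L}_1(X_t)$ for some process $X$ solving the conditional McKean-Vlasov equation
\begin{equation*}
\mathrm{d}X_t = G(X_t)\,\mathrm{d}t + F\bigl(X_t-\mathbb{E}_1[X_t]\bigr)\,\mathrm{d}t + \sigma\,\mathrm{d}B_t+\sigma_0\,\mathrm{d}B^0_t,
\end{equation*}
on a suitable enlargement of the probability space, with initial law $m_0$ and $\mathbb{E}[\sup_{t\le T}|X_t|]<\infty$. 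Proposition~\ref{prop:existence:!} then forces the conditional laws $\mathcal{L}_1(X_t)$ of any two such lifts of two candidate solutions $m$ and $\tilde m$ to coincide; consequently $m\equiv\tilde m$.

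The only non-routine point is verifying the precise hypotheses under which the superposition theorem of \cite{lacker2022superposition} applies in our setting, namely the integrability of the coefficients along the candidate trajectory and the measurability of the mean-field drift $x\mapsto G(x)+F(x-\mu_1(m_t))$. The Lipschitz bounds from Assumptions~\ref{A1} and~\ref{A2} combined with the moment control $\mathbb{E}_0[\sup_{t\le T}\int|x|\,m_t(\mathrm{d}x)]<\infty$ give linear growth in $x$ uniformly in $(t,\omega_0)$ on $[0,T]$, so this verification is standard. Since the paper's own proof simply points to \cite{lacker2022superposition} without elaborating, I would be comfortable stopping at this level of detail rather than reproving the superposition result.
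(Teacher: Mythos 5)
Your overall strategy is the one the paper uses: exploit the conditional superposition principle of \cite{lacker2022superposition} to pass between the SPDE \eqref{eq:SFKP:sigma0} and the conditional McKean--Vlasov SDE \eqref{LP}, then invoke Proposition~\ref{prop:existence:!} for that SDE. The existence half (set $m_t=\mathcal{L}_1(X_t)$ and verify the weak form via It\^o plus conditioning) is exactly what Proposition~1.2 in \cite{lacker2022superposition} packages, and your derivation of it is fine.

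There is a gap in the uniqueness half, though. The superposition theorem (Theorem~1.3 in \cite{lacker2022superposition}) produces, from a weak solution $(\widetilde m_t)_t$ of the SPDE, a \emph{weak} solution $\widetilde X$ of \eqref{LP}: the idiosyncratic noise $\widetilde B$ is part of the construction, and $\widetilde X$ typically lives on an enlargement of the original space that is not a priori the same one carrying the strong solution of Proposition~\ref{prop:existence:!}. Proposition~\ref{prop:existence:!} gives pathwise (strong) uniqueness for \eqref{LP} on a \emph{fixed} probability space with a \emph{fixed} pair $(B,B^0)$. To deduce from this that $\mathcal{L}(\widetilde X_t \mid \mathcal{F}^0_T)$ is the same for every weak lift — which is what you need to conclude $\widetilde m_t = \mathcal{L}_0(X_t)$ and hence $m\equiv\widetilde m$ — requires passing from pathwise uniqueness to uniqueness of the conditional law, i.e.\ a conditional Yamada--Watanabe argument. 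Your sentence ``Proposition~\ref{prop:existence:!} then forces the conditional laws of any two such lifts to coincide'' silently assumes this implication rather than establishing it, and it is precisely the step the paper flags explicitly (``a relevant form of Yamada--Watanabe theorem applies''). It is a standard step in the common-noise McKean--Vlasov literature, but it should be named as a separate ingredient; strong uniqueness alone does not literally ``force'' anything about weak solutions on other spaces.
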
 
\begin{proof} 
Existence of a solution is a straightforward consequence of the existence part in the statement of 
Proposition 
\ref{prop:existence:!}. This is Proposition 1.2 in 
\cite{lacker2022superposition}. Uniqueness is more difficult to obtain. We invoke Theorem 1.3 
in 
\cite{lacker2022superposition}. In brief, any solution $(\widetilde m_t)_{t \geq 0}$ 
(in the weak sense) 
to \eqref{eq:SFKP:sigma0} induces 
a weak solution to 
the McKean-Vlasov equation 
$\widetilde X=(\widetilde X_t)_{t \geq 0}$
to \eqref{LP}, weak in the sense that the private (or idiosyncratic) noise, say $\widetilde B$, becomes part of the 
solution. It holds $\widetilde m_t = {\mathcal L}(\widetilde X_t \vert {\mathcal F}_T^0)$. In fact, by strong uniqueness 
to \eqref{LP}, a relevant form of Yamada-Watanabe theorem applies to the current setting and implies that 
${\mathcal L}(\widetilde X_t \vert {\mathcal F}_T^0)$ is necessarily equal to 
${\mathcal L}_0(X_t)$, with $X=(X_t)_{t \geq 0}$ 
being the solution given by
Proposition  
\ref{prop:existence:!}. 
\end{proof} 
}


	\subsection{Restoration of uniqueness}
We now collect our main results about the long time behaviour of the process $(m_t)_{t \geq 0}$, solution of Equation \eqref{eq:SFKP:sigma0}. {Before we do so, we first state the precise definition of an invariant measure for such a stochastic process, when seen as a process with values in the space of probability measures}: 
\begin{definition}[Invariant measure]\label{D2}
We say that $\widebar P \in \mathcal{P}_{\textcolor{black}{2}}(\mathcal{P}_{{1}}(\R^d))$ {is} an invariant measure for the process $(m_t)_{t\geq 0}$ {when the latter is regarded as taking values in ${\mathcal P}_1({\mathbb R}^d)$}, if the law of $m_t$ is independent of $t$, when $m_0$ is distributed according to $\widebar P$, 
{i.e.}, for all continuous and bounded function $\phi \in \mathcal{C}_b(\mathcal{P}_{{1}}(\R^d))$
	\begin{align*}
		 {\E_0[\phi(m_t)]} = \E_0[\phi(m_0)] = \int_{\mathcal{P}_{{1}}(\R^d)} \phi(m) \widebar P(\mathrm{d}m), \quad \forall t > 0. 
	\end{align*}
\end{definition}
Of course, if $\bar P$ is an invariant probability distribution, then the process $(m_t)_{t\geq 0}$ with initial condition $\bar P$, has the same law as the process  $(m_{t+T})_{t\geq 0}$ for any $T > 0$.  This follows directly from the fact that $m_0$ and $m_T$ have the same law combined with the weak Markov property.
{Moreover, it is worth observing that, under the condition $\bar P \in \mathcal{P}_{\textcolor{black}{2}}(\mathcal{P}_{{1}}(\R^d))$ imposed in the statement, 
the function $m \mapsto \mathrm{d}_1^{{\mathbb R}^d}(\delta_0,m) = \int_{{\mathbb R}^d} \vert x \vert m({\mathrm d}x)$ is square integrable. 
This constraint may seem rather artificial. In fact, given the form of the dynamics in Equation \eqref{eq:SFKP:sigma0}, it is natural to 
require the invariant measures to be supported by ${\mathcal P}_1({\mathbb R}^d)$. Then, the aforementioned  integrability condition provides an additional growth property that makes easier the identification of those invariant measures, see 
the proof of Proposition 
\ref{prop:existence} right below.}

\vskip 4pt

Let us begin with the existence of an invariant measure $\bar P$:
\begin{proposition}\label{prop:existence}
	Under Assumptions~\ref{A1} and~\ref{A2}, there exists at least one invariant measure $\bar P$ for the process defined by Equation \eqref{eq:SFKP:sigma0}. {\color{black} Moreover,
	under the condition $\alpha > L_G$, any invariant measure belongs to $\mathcal{P}_2(\mathcal{P}_2(\R^d))$.} 
\end{proposition}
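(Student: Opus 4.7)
The proposition splits into two claims. My plan for existence is a Krylov--Bogolyubov argument on the Polish space $\mathcal{P}_1(\R^d)$. I would start from a deterministic initial condition $m_0 = \delta_{x_0}$ and seek a uniform-in-time moment bound $\sup_{t \geq 0}\E[|X_t|^2] < \infty$ for the Lagrangian solution of \eqref{LP}, relying on (a) the confining behaviour of $G$ encoded in Assumption~\ref{A1} (through positivity of $\kappa$ outside of a ball, giving $x\cdot G(x) \leq C - c|x|^2$ at infinity up to the Lipschitz correction near the origin) and (b) the contraction structure of $F$ from Assumption~\ref{A2} (through the dissipation $(X_t - \E_1[X_t])\cdot F(X_t - \E_1[X_t]) \leq -\alpha_F |X_t - \E_1[X_t]|^2$, the cross term $\E_1[X_t]\cdot F(X_t - \E_1[X_t])$ being absorbed via Young's inequality). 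The moment bound yields tightness of $\{\mathrm{Law}(m_t)\}_{t \geq 0}$ in $\mathcal{P}(\mathcal{P}_1(\R^d))$ through the usual characterization of relative compactness in $\mathcal{P}_1$. Combined with the Feller property of $\mathscr{P}_t$ on $C_b(\mathcal{P}_1(\R^d))$ --- itself a Gronwall argument using the Lipschitz regularity of $F$ and $G$ --- this gives an invariant measure as a weak limit of the Ces\`aro means $\bar P_T := T^{-1}\int_0^T \mathrm{Law}(m_t)\,dt$. A Fatou-type argument applied to $\sup_t \E_0[\mathrm{d}_1^{\R^d}(\delta_0, m_t)^2] \leq \sup_t \E[|X_t|^2]$ then ensures that the resulting $\bar P$ lies in $\mathcal{P}_2(\mathcal{P}_1(\R^d))$, as required by Definition~\ref{D2}.

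For the improved integrability under $\alpha_F > L_G$, my plan is to exploit the mean-fluctuation decomposition $\mu_2(m_t) = |\xi_t|^2 + v(m_t)$, with $\xi_t := \E_1[X_t]$ and $Y_t := X_t - \xi_t$. By subtraction the common-noise term disappears from $Y_t$, and
\[
dY_t = \bigl[G(X_t) - \E_1 G(X_t)\bigr]\,dt + \bigl[F(Y_t) - \E_1 F(Y_t)\bigr]\,dt + \sigma\, dB_t.
\]
Applying It\^o to $|Y_t|^2$ and taking $\E_1$, the two drift contributions are controlled by the independent-copy (doubling) identity
\[
\E_1\bigl[Y_t \cdot (H(X_t) - \E_1[H(X_t)])\bigr] = \tfrac{1}{2}\,\E_1\bigl[(X_t - X'_t)\cdot (H(X_t) - H(X'_t))\bigr],
\]
for $X'_t$ an independent copy of $X_t$ given $\mathcal{F}^0$. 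This replaces the quadratic forms by the one-sided constants of $H$, giving a bound $\leq L_G v(m_t)$ when $H = G$ (using $m_G \geq -L_G$) and $\leq -\alpha_F v(m_t)$ when $H = F$. The resulting $\p_0$-almost sure differential inequality
\[
\frac{d v(m_t)}{dt} \leq 2(L_G - \alpha_F)\, v(m_t) + \sigma^2 d
\]
then yields, under $\alpha_F > L_G$, a uniform bound $v(m_t) \leq \max\bigl(v(m_0),\, \sigma^2 d / [2(\alpha_F - L_G)]\bigr)$. For $\xi_t$, the closed-form SDE $d\xi_t = \E_1[G(X_t)]\,dt + \E_1[F(Y_t)]\,dt + \sigma_0\,dB^0_t$ has drift $G(\xi_t) + O(v(m_t))$, the remainder coming from Taylor expansion of $G$ and $F$ at $\xi_t$ using the Lipschitz continuity of $DG$ and $DF$ together with $\E_1[Y_t] = 0$; confinement of $G$ and the uniform bound on $v(m_t)$ then give $\sup_t \E_0[|\xi_t|^2] < \infty$ as soon as $\E_0[|\xi_0|^2] < \infty$.

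The delicate step is to transfer this conditional bound to an \emph{arbitrary} invariant measure, since \emph{a priori} we only know $\bar P \in \mathcal{P}_2(\mathcal{P}_1(\R^d))$ and it may happen that $\mu_2(m) = +\infty$ under $\bar P$. My plan is a truncation/stability argument: for $m_0 \sim \bar P$, replace $m_0$ by its restriction $m_0^R$ to the ball $\{|x| \leq R\}$ (renormalized), so that $\mu_2(m_0^R) \leq R^2$; apply the bounds of the previous paragraph to the solution $(m_t^R)_{t \geq 0}$ starting from $m_0^R$, with constants independent of $R$; and pass to the limit $R \to \infty$ using continuous dependence of \eqref{LP} on initial data in $\mathrm{d}_1^{\R^d}$ combined with Fatou's lemma applied to $m \mapsto \mu_2(m)$. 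Stationarity $\E_{\bar P}[\mu_2(m)] = \E_0[\mu_2(m_t)]$ then concludes.

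The main obstacle is exactly this last transfer step: the SPDE does \emph{not} smooth second moments in finite time, so stationarity alone does not rule out $\mu_2(m) = +\infty$ almost surely under $\bar P$. It is precisely the strict inequality $\alpha_F > L_G$ that makes the variance estimate above uniform in the initial data and thereby allows the truncation argument to close when passing to the limit $R \to \infty$.
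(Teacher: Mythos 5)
Your existence argument via Krylov--Bogolyubov from uniform-in-time moment bounds is essentially what the paper does, so that part is fine. For the improved integrability, your structural intuition is also correct and matches the paper's Proposition~\ref{prop:approx}: the mean--fluctuation decomposition $\mu_2(m_t) = |\mu_1(m_t)|^2 + v(m_t)$, the conditional variance inequality $\frac{d}{dt}v(m_t) \le -2(\alpha_F - L_G)\,v(m_t) + d\sigma^2$, and the role of the strict inequality $\alpha_F > L_G$ are precisely the right objects. The idea to truncate the initial condition is also present in the paper (the paper uses a pushforward by a smooth cutoff rather than restriction-and-renormalization, but that is a cosmetic difference).

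The gap is in the way you try to close the truncation argument. You propose to pass to the limit $R\to\infty$ at fixed $t$ using continuous dependence in $\mathrm{d}_1^{\R^d}$ and lower semicontinuity of $\mu_2$. But the bound you derive reads
\[
v(m_t^R) \;\le\; v(m_0^R)\,e^{-2(\alpha_F-L_G)t} \;+\; \frac{d\sigma^2}{2(\alpha_F-L_G)},
\]
and the first term is \emph{not} uniformly bounded in $R$ at fixed $t$: since we do not know \emph{a priori} that $\mu_2(m_0) < \infty$ under $\bar P$ (only that $m_0 \in \mathcal{P}_1(\R^d)$ a.s.), $v(m_0^R)$ may diverge as $R\to\infty$. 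Lower semicontinuity gives $\mu_2(m_t) \le \liminf_R \mu_2(m_t^R)$, but the right-hand side may be $+\infty$. The stationarity identity $\int \mu_2\,d\bar P = \E_0[\mu_2(m_t)]$ is then tautological when $m_0 \sim \bar P$ (both sides equal the same unknown, possibly infinite, quantity). Contrary to what you assert, $\alpha_F > L_G$ alone does not make the constant in the Gronwall bound independent of the initial data; it only makes the exponent negative. You would need to send $t\to\infty$ to kill the initial variance, but then you lose the identification of the limiting law with $\bar P$ without invoking unique ergodicity, which is not yet available.

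The paper closes the argument at the level of the \emph{weak stationarity identity} rather than the pathwise variance inequality. After the same truncation, apply the chain rule on $\mathcal{P}_2(\R^d)$ to the test functional $\Phi(m) = \int \phi(x-\mu_1(m))\,m(\mathrm{d}x)$ with $\phi(x) = \vartheta(|x|)$ for a \emph{bounded} smooth $\vartheta$. Because $\Phi$ is bounded and the integrands appearing in $\mathcal{M}\Phi$ are $\mathrm{d}_1^{\R^d}$-continuous with at most linear growth in $\mathrm{d}_1^{\R^d}(\delta_0,m)$, the limit $R\to\infty$ goes through and yields the balance equation $\langle \bar P ; \mathcal{M}\Phi\rangle = 0$. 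Rearranging it and using the dissipativity of $F$ against the Lipschitz bound on $G$ gives
\[
(\alpha_F - L_G)\int_{\mathcal{P}_1(\R^d)}\int_{\R^d} \vartheta'(|x-\mu_1(m)|)\,|x-\mu_1(m)|\, m(\mathrm{d}x)\,\bar P(\mathrm{d}m) \le c_d\sigma^2 + C\Bigl(1 + \int \mathrm{d}_1^{\R^d}(\delta_0,m)^2\,\bar P(\mathrm{d}m)\Bigr)
\]
with $C$ independent of $\vartheta$. Letting $\vartheta'\uparrow \mathrm{id}$ by monotone convergence then gives $\int v\,d\bar P < \infty$, hence $\bar P\in\mathcal{P}_2(\mathcal{P}_2(\R^d))$ since $|\mu_1(m)|^2 \le \mathrm{d}_1^{\R^d}(\delta_0,m)^2$ is already integrable. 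The key point your plan misses is that the balance equation for the invariant measure ``forgets'' the possibly infinite initial variance for free, whereas a Gronwall estimate integrated from a truncated initial condition does not.
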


The proof of Proposition~\ref{prop:existence} \textcolor{black}{makes use of the notion of derivatives on} the space of measures. \textcolor{black}{Following \cite{cardaliaguet2010notes}
(see also \cite[Chap. 5]{CD2018_1} and \cite[Chap. 4]{CD2018_2})}, we recall the following definition:
\begin{definition}\label{D4}
Let us define $\mathcal{C}_b^2(\mathcal{P}_2(\mathbb{R}^d))$ as the collection of continuous and bounded functions $\Phi: \mathcal{P}_2(\mathbb{R}^d) \to \mathbb{R}$ with the following properties
\textcolor{black}{(throughout the definition, ${\mathcal P}_2({\mathbb R}^d)$ is equipped with $\mathrm{d}_2^{{\mathbb R}^d}$)}:
\begin{itemize}
\item There exists a unique 
\textcolor{black}{jointly continuous  function $\partial_m \Phi: (m,x) \in \mathcal{P}(\mathbb{R}^d) \times \mathbb{R}^d \to \partial_m \Phi(m,x) \in \mathbb{R}^d$, 
at most of quadratic growth in $x$ for any $m$,}
 such that 
\[
\lim _{h \to 0} \frac{\Phi(m+h(m^{\prime}-m))-\Phi(m)}{h}=\int_{\mathbb{R}^d} \partial_m \Phi(m, v)(m^{\prime}-m)(\mathrm{d} v),
\]
for all $m, m^{\prime} \in \mathcal{P}(\mathbb{R}^d)$ and
\[
\int_{\mathbb{R}^d} \partial_m \Phi(m, v) m(\mathrm{d} v)=0, \quad m \in \mathcal{P}(\mathbb{R}^d);
\]
\item 
\textcolor{black}{For any $m \in {\mathcal P}_2({\mathbb R}^d)$}, 
the mapping $x \mapsto \partial_m \Phi(m, x)$ is differentiable, \textcolor{black}{with the gradient being 
denoted 
$D_m \Phi(m, x)$; the mapping $(m,x) \mapsto D_{m} \Phi(m,x)$
is jointly continuous in $(m,x)$ and at most of linear growth in $x$ uniformly in $m$ in bounded subsets of ${\mathcal P}_2({\mathbb R}^d)$};
\item For any  $x \in \mathbb{R}^d$, every component of the $\mathbb{R}^d$-valued function $m \mapsto D_m \Phi\left(m, x\right)$ satisfies the same conditions as 
\textcolor{black}{in 
the first bullet point}, resulting in a mapping $(m,x,y) \in {\mathcal P}_2({\mathbb R}^d) \times {\mathbb R}^d \times {\mathbb R}^d \mapsto D_m^2 \Phi(m, x, y) \in \mathbb{R}^{d \times d}$, 
\textcolor{black}{which is jointly continuous (in the three arguments) and  at most of quadratic growth in $(x,y)$, uniformly in $m$ in bounded subsets of 
${\mathcal P}_2({\mathbb R}^d)$};
\item For any $m \in \mathcal{P}_2(\mathbb{R}^d)$, the 
\textcolor{black}{function $x \in {\mathbb R}^d \mapsto D_m \Phi(m,x)$ is differentiable; the Jacobian, denoted $(m,x) \mapsto D^2_{xm} \Phi(m, x)$, is jointly continuous in $(m, x)$
and at most of linear growth in $x$, uniformly in $m$ in bounded subsets of ${\mathcal P}_2({\mathbb R}^d)$}.
\end{itemize}
\end{definition}

\begin{proof}[Proof of Proposition~\ref{prop:existence}]

\textit{Step 1.}
	The proof of the existence of an invariant measure closely follows the approach taken in Proposition 2 of \cite{maillet2023note}, using the results of Section 1.2 of \cite{lacker2022superposition}. The key step therein is to control, uniformly in time, the second-order moments of the solutions to Equation \eqref{eq:SFKP:sigma0}, which can be achieved with relative ease in our context. Details are left to the reader. 
	
	\trash{{\color{purple} \textcolor{blue}{We now address the second part of the statement (any invariant measure in 
	${\mathcal P}_1({\mathbb P}_1(\R^d))$ is in fact in 
	${\mathcal P}_2({\mathbb P}_2(\R^d))$).}
	Let us now consider an invariant measure $\bar P \in \mathcal{P}(\mathcal{P}(\R^d))$. We also define for  any $r > 0$,  $\chi_R^d : \R^d \to [0,1]$, a smooth approximation of $x \mapsto \mathds{1}_{\mathcal{B}_{\R^d}(0,r)}$. Finally, for any $r > 0$, we define $\rho_r: x \mapsto x\chi_r(x) \in \R^d$. 
	
	We know that for any function $\Phi \in \mathcal{C}^2_b(\mathcal{P}(\R^d))$, 
	\begin{equation}
		\langle \bar P ; \mathcal{M}\Phi\rangle = 0,
	\end{equation}
where for any $m \in \mathcal{P}_2(\R^d)$, 
	\begin{align*}
		\mathcal{M} \Phi(m):= & \int_{\mathbb{R}^{d}}\left[D_{m} \Phi(m, x) \cdot (G(x) + F(x-\mu_1(m)) +\frac{\sigma^2+ \sigma_0^2}{2}  D^2_{xm} \Phi(m,x)\right] m(\mathrm{d}x) \\
&+\frac{\sigma_0^2}{2} \int_{\mathbb{R}^{2d}} \mathrm{Tr}\left[D_{mm}^{2} \Phi\left(m, x, y\right)\right] m(\mathrm{d}x) m(\mathrm{d}y).
	\end{align*}
	Let us consider also $\phi : \R \to \R$ smooth and bounded, and for all $R > 0$, $\Phi^r :m \mapsto \phi(v(\rho_r\sharp m))$. Clearly, $\Phi^r \in \mathcal{C}^2_b(\R^d)$, and 
	\begin{equation}
		\langle \bar P ; \mathcal{M}\Phi^r\rangle = 0.
	\end{equation}
Then, one can prove that for any $(m,x)\in \mathcal{P}(\R^d)\times \R^d$, 
\begin{align*}
	& D_m\Phi^r(m,x) = 2\phi^\prime(v(\rho_r\sharp m))D_x\rho_r(x)(\rho_r(x) - \mu_1(\rho_r\sharp m)).
\end{align*}
Moreover, 
\begin{equation*}
	D_x\rho_r(x) = \mathrm{Id} \chi_r(x) + x(\nabla_x \chi_r(x))^T.
\end{equation*}
Then, 
\begin{align*}
	& D_x\rho_r(x)(\rho_r(x) - \mu_1(\rho_r\sharp m))\\
  = & \mathrm{Id} \chi_r(x)(\rho_r(x) - \mu_1(\rho_r\sharp m)) + x(\nabla_x \chi_r(x))^T(\rho_r(x) - \mu_1(\rho_r\sharp m)) \\
  =: & \: \mathrm{Id} \chi_r(x)(\rho_r(x) - \mu_1(\rho_r\sharp m))  + \Gamma_r(x) \in \R^d. 
\end{align*}
This allows us to write:
\begin{equation*}
	D^2_{xm}\Phi^r(m,x) =  2\nabla\phi(v(\rho_r\sharp m))\mathrm{Id}D_{x}\left\{\chi_r(x)(\rho_r(x) - \mu_1(\rho_r\sharp m)\right\} + 2\nabla\phi(v(\rho_r\sharp m))D_x\Gamma_r(x),
\end{equation*}
with 
\begin{equation*}
	D_{x}\left\{\chi_r(x)(\rho_r(x) - \mu_1(\rho_r\sharp m))\right\} = \nabla_x\chi_r(x)(\rho_r(x) - \mu_1(\rho_r\sharp m))^T + \chi_r(x)D_x\rho_r(x)
\end{equation*}
Finally, 
\begin{align*}
	D^2_{xm}\Phi^r(m,x) = & 2\nabla\phi(v(\rho_r\sharp m))\nabla_x\chi_r(x)(\rho_r(x) - \mu_1(\rho_r\sharp m))^T \\
	+ & 2\nabla\phi(v(\rho_r\sharp m))\chi_r(x)D_x\rho_r(x)\\
	 + & 2\nabla\phi(v(\rho_r\sharp m))D_x\Gamma_r(x). 
\end{align*}
About the second order derivative, we have
\begin{align*}
	D^2_{mm}\Phi^r(m,x,y) = & 4\phi^{\prime\prime}(v(\rho_r\sharp m))D_x\rho_r(x)(\rho_r(x) - \mu_1(\rho_r\sharp m))\otimes(\rho_r(y) - \mu_1(\rho_r\sharp m)) \\
	 & -2\phi^{\prime}(v(\rho_r\sharp m))D_x\rho_r(x)D_x\rho_r(y). 
\end{align*}
	}
}
\vskip 4pt

	{\color{black}
	\textit{Step 2.}
We now address the second part of the statement (any invariant measure in 
	${\mathcal P}_2({\mathcal P}_1(\R^d))$ is in fact in 
	${\mathcal P}_2({\mathcal P}_2(\R^d))$). 
	Let us \textcolor{black}{thus} consider an invariant measure $\bar P \in \mathcal{P}_2(\mathcal{P}_1(\R^d))$
	and 
	call $m_0$ an ${\mathcal F}_0$-valued random variable with values in 
	${\mathcal P}_1(\R^d)$ such that $\mathcal{L}_0(m_0) = \bar P$. We denote by 
	$(m_t)_{t \geq 0}$   the solution to Equation \eqref{eq:SFKP:sigma0} with 
	$m_0$ as initial condition.
	We also introduce for  any $R > 0$,  a smooth 
	\textcolor{black}{function $\chi_R^d : \R^d \to \R^d$ that coincides with the identity on the ball $B_{{\mathbb R}^d}(0,R)$ and that is equal to $0$ outside 
	$B_{{\mathbb R}^d}(0,2R)$. We can assume the Lipschitz constant of $\chi_R^d$ to be bounded by $1$.} For any $R > 0$, \textcolor{black}{we denote by $P^R_0$ the law of $\chi_R^d\sharp m_0$ (where $\sharp$ is the pushforward operator) and construct 
	 $(m^R_t)_{t \geq 0}$, the solution to Equation \eqref{eq:SFKP:sigma0} with 
	 $\chi_R^d \sharp m_0$ as initial condition. Letting $(P^R_t)_{t \geq 0} := (\mathcal{L}_0(m^R_t))_{t \geq 0}$}, 
we deduce
from the truncation performed on the initial condition and from Equation \eqref{eq:SFKP:sigma0} that, for any $R > 0$ and any $t \geq 0$, $P^R_t(\mathcal{P}_2(\R^d)) = 1$. 	 We obtain from
	 the chain rule proven in 
	 \cite[Chap. 5]{CD2018_1} and \cite[Chap. 4]{CD2018_2} (see \cite{lacker2022superposition}
	 for the form that is retained below)  
	 that for any function $\Phi \in \mathcal{C}_b^2(\mathcal{P}_2(\mathbb{R}^d))$, for any $t \geq 0$,
	\begin{equation}\label{eq:cara}
		\langle P^R_t - P^R_0; \Phi\rangle = \int_0^t \langle P^R_s \textcolor{black}{;} \mathcal{M}\Phi\rangle\mathrm{d}s,
	\end{equation}
where for any $m \in \mathcal{P}_2(\R^d)$, 
	\begin{align*}
		\mathcal{M} \Phi(m):= & \int_{\mathbb{R}^{d}}\left[D_{m} \Phi(m, x) \cdot (G(x) + F(x-\mu_1(m)) +\frac{\sigma^2+ \sigma_0^2}{2}  \mathrm{Tr}\left[D^2_{xm} \Phi(m,x)\right]\right] m(\mathrm{d}x) \\
&+\frac{\sigma_0^2}{2} \int_{\mathbb{R}^{2d}} \mathrm{Tr}\left[D_{mm}^{2} \Phi\left(m, x, y\right)\right] m(\mathrm{d}x) m(\mathrm{d}y).
	\end{align*}

Let us now consider $\Phi(m) := 
\int_{{\mathbb R}^d} \phi(x-\mu_1(m)) m(\mathrm{d}x)$,
for a function $\phi : {\mathbb R}^d \to \R$ which we assume to be bounded and smooth, with bounded derivatives of order 1 and 2. In particular, 
$\Phi$ is bounded. Now, \textcolor{black}{$m\mapsto \Phi(m) \in\mathcal{C}^2_b(\mathcal{P}_2(\R^d))$ and we can write,} for any $m \in \mathcal{P}_2(\R^d)$ \textcolor{black}{and} $x,y \in \R^d$, 
\begin{align*}
& D_m\Phi(m,x)= \nabla \phi (x- \mu_1(m)) -  \int_{{\mathbb R}^d} \nabla \phi(y - \mu_1(m))m(\mathrm{d}y), 
\\
& D^2_{xm}\Phi(m,x)= \nabla^2 \phi (x- \mu_1(m)),
\\
& D^2_{mm}\Phi(m,x,y)= -  \nabla^2\phi (x- \mu_1(m))
- \nabla^2\phi (y- \mu_1(m))
+
 \int_{{\mathbb R}^d} \nabla^2\phi (z- \mu_1(m))m(\mathrm{d}z).
\end{align*}
Plugging this into Equation \eqref{eq:cara}, we obtain
\begin{equation*}
\begin{aligned}
	 \int_{\mathcal{P}_2(\R^d)} &\Phi(m)  (P^R_t - P^R_0)(\mathrm{d}m) \\
	&= \int_0^t \bigg[ \int_{\mathcal{P}_2(\R^d)}\int_{\R^d} \nabla\phi(x- \mu_1(m)) \cdot(G(x) + F(x-\mu_1(m)))m(\mathrm{d}x)P^R_s(\mathrm{d}m) 
	\\
	&-\int_{\mathcal{P}_2(\R^d)} \biggl( \int_{\R^d} \nabla\phi(y- \mu_1(m)) m(\mathrm{d}y) \biggr) \cdot \biggl( \int_{\R^d} (G(x) + F(x-\mu_1(m)))m(\mathrm{d}x) \biggr) \, P^R_s(\mathrm{d}m) 
	\\
	& + \frac12 \bigl( \sigma^2 + \sigma_0^2 \bigr) 
	 \int_{{\mathbb R}^d} {\rm Tr} \bigl[ \nabla^2\phi (z- \mu_1(m))\bigr] m(\mathrm{d}z)
 - \frac12 \sigma_0^2 	 \int_{{\mathbb R}^d} {\rm Tr} \bigl[ \nabla^2\phi (z- \mu_1(m))\bigr] m(\mathrm{d}z) \biggr] \, \mathrm{d}s,
\end{aligned}
\end{equation*}
which gives (by simplifying the last line) 
\begin{equation}\label{eq:aaa}
\begin{aligned}
	\int_{\mathcal{P}_2(\R^d)} &\Phi(m) (P^R_t - P^R_0)(\mathrm{d}m) \\
	& = \int_0^t \bigg[ \int_{\mathcal{P}_2(\R^d)}\int_{\R^d} \nabla\phi(x- \mu_1(m)) \cdot(G(x) + F(x-\mu_1(m)))m(\mathrm{d}x)P^R_s(\mathrm{d}m) 
	\\
	&-\int_{\mathcal{P}_2(\R^d)} \biggl( \int_{\R^d} \nabla\phi(y- \mu_1(m)) m(\mathrm{d}y) \biggr) \cdot \biggl( \int_{\R^d} (G(x) + F(x-\mu_1(m)))m(\mathrm{d}x) \biggr) \, P^R_s(\mathrm{d}m) 
	\\
	& + \frac12   \sigma^2  	 \int_{{\mathbb R}^d} {\rm Tr} \bigl[ \nabla^2\phi (z- \mu_1(m))\bigr] m(\mathrm{d}z) \biggr] \, \mathrm{d}s. 
\end{aligned}
\end{equation}

\textit{Step 3.} 
We now want to let $R \to \infty$. Let us begin with the term on the left-hand side
\begin{align*}
	\int_{\mathcal{P}(\R^d)} \Phi(m) (P^R_t - P^R_0)(\mathrm{d}m) = \int_{\mathcal{P}(\R^d)} \Phi(m) (P^R_t - \bar P)(\mathrm{d}m) + \int_{\mathcal{P}(\R^d)} \Phi(m) (\bar P - P^R_0)(\mathrm{d}m).
\end{align*}
\textcolor{black}{Using} the Lipschitz-continuity of $F$ and $G$, we can show that,
for any \textcolor{black}{$T \geq 0$}, there exists a constant \textcolor{black}{$C_T \geq 0$, such that, for any 
$t \in [0,T]$}, 
\begin{equation}\label{eq:jhfbjzd}
	\mathrm{d}_2^{\mathcal{P}_1(\R^d)}(P^R_t, \bar P) \leq \textcolor{black}{C_T} \: \mathrm{d}_2^{\mathcal{P}_1(\R^d)}(P^R_0, \bar P).
\end{equation}
Moreover, we can write
\begin{equation*}
	\mathrm{d}_2^{\mathcal{P}_1(\R^d)}(P^R_0, \bar P)^2 \leq \int_{\mathcal{P}(\R^d)} \biggl( \int_{\R^d} |\chi_R^d(x) - x| m(\mathrm{d}x) \biggr)^2 \bar P(\mathrm{d}m). 
\end{equation*}
Since $\chi_R^d$ is equal to $0$ in $0$ and is 1-Lipschitz continuous
and since 
$\bar P \in \mathcal{P}_2(\mathcal{P}_1(\R^d))$, the above right-hand side 
tends to $0$ as $R$ tends to $\infty$, that is 
\begin{equation}\label{eq:jvfebn}
\lim_{R \rightarrow + \infty} \mathrm{d}_2^{\mathcal{P}_1(\R^d)}(P^R_{\textcolor{black}{0}}, \bar P) = 0.
\end{equation}
Equations \eqref{eq:jhfbjzd} and \eqref{eq:jvfebn} say that for all $T \geq 0$, 
\begin{equation}
\lim_{R \rightarrow + \infty} \sup_{0 \le t \le T} \mathrm{d}_2^{\mathcal{P}_1(\R^d)}(P^R_{t}, \bar P) = 0.
\end{equation}
Back to \eqref{eq:aaa}, we observe that all the functions 
of $m$ that are integrated with respect to $\bar P$ therein are 
continuous in 
$m$ with respect to 
$\mathrm{d}_1^{{\mathbb R}^d}$. Moreover, they are all at most of linear growth with respect to $\int_{{\mathbb R}^d} \vert x \vert 
m({\mathrm d}x)$. This makes it possible to let $R$ tend to $+\infty$. We get 
\begin{equation*} 
\begin{split} 
&\int_{\mathcal{P}_1(\R^d)}\int_{\R^d} \nabla\phi(x- \mu_1(m)) \cdot(G(x) + F(x-\mu_1(m)))m(\mathrm{d}x) \bar P (\mathrm{d}m) 
	\\
	&-\int_{\mathcal{P}_1(\R^d)}   \biggl( \int_{\R^d} \nabla\phi(y- \mu_1(m)) m(\mathrm{d}y) \biggr) \cdot \biggl( \int_{\R^d} (G(x) + F(x-\mu_1(m)))m(\mathrm{d}x) \biggr) \, \bar P(\mathrm{d}m) 
	\\
	& + \frac12 
	 \sigma^2 
	 \int_{\mathcal{P}_1(\R^d)}
	 \int_{{\mathbb R}^d} {\rm Tr} \bigl[ \nabla^2\phi (z- \mu_1(m))\bigr] m(\mathrm{d}z)
  \bar P (\mathrm{d}m)  = 0.
  \end{split}
  \end{equation*} 
\vskip 4pt

\textit{Step 4.} Now, we take $\phi(x)$ in the form $\vartheta(\vert x \vert)$
where $\vartheta$ is a smooth function from $[0,+\infty)$ into itself, that is equal to
the identity 
on $[0,A]$ and that is equal to $3A/2$ on $[2A,+\infty)$, for some $A>0$. 
We assume 
$\vartheta'$ to be non-negative and bounded by the identity
and 
$\vartheta''$ to take values in $[0,2]$. 
Under this choice, $\nabla\phi(x)$ rewrites $[\vartheta'(\vert x \vert) / \vert x \vert] x$ and $\mathrm{Tr}[ \nabla^2\phi(x) ]$
is bounded by a constant $c_d$ only depending on the dimension $d$. Then, 
we can find a constant $C$, only depending on $G$ (and whose value is allowed to change from line to line) such that, for any $m \in \mathcal{P}_2(\R^d)$, 
\begin{align*}
&\biggl\vert  \int_{\R^d} \nabla\phi(x- \mu_1(m)) \cdot G(x)  m(\mathrm{d}x)  \biggr\vert
\\
&\leq 
\biggl\vert  \int_{\R^d} \nabla\phi(x- \mu_1(m)) \cdot \bigl[ G(x) - G(\mu_1(m))\bigr]  m(\mathrm{d}x)  \biggr\vert
+
\biggl\vert  \int_{\R^d} \nabla\phi(x- \mu_1(m)) \cdot G(\mu_1(m))  m(\mathrm{d}x)  \biggr\vert
\\
&\leq L_G \int_{\R^d} \vartheta'( \vert x - \mu_1(m) \vert ) \vert x - \mu_1(m) \vert  m(\mathrm{d}x)
	+ C 
	\int_{\R^d} \vartheta'( \vert x - \mu_1(m) \vert ) \bigl( 1 +\vert  \mu_1(m) \vert \bigr)  m(\mathrm{d}x)
\\
&\leq L_G 
\int_{\R^d} \vartheta'( \vert x - \mu_1(m) \vert ) \vert x - \mu_1(m) \vert  m(\mathrm{d}x)
+ C \bigl( 1 +  \mathrm{d}_1^{{\mathbb R^d}}(\delta_0,m)^2 \bigr). 
\end{align*}  
We insist on the fact that $C$ is independent of $\vartheta$. 
Now using the assumption on $F$ (see Assumption \ref{A2}), we obtain
\begin{align*}
	& \int_{\R^d}\nabla\phi(x- \mu_1(m))\cdot F(x-\mu_1(m))m(\mathrm{d}x)  \leq - \alpha  \int_{\R^d}\vartheta'( \vert x - \mu_1(m) \vert )
	\vert x - \mu_1(m)|
	m(\mathrm{d}x). 
\end{align*}
Rewriting the conclusion of the third step gives
\color{black}
\begin{equation}
\label{eq:lower:bound:inv:measure} 
\begin{split}
	&(\alpha - L_G) \int_{\mathcal{P}_1(\R^d)}\int_{\R^d} \vartheta'(\vert x- \mu_1(m)\vert) |x- \mu_1(m)|
	m(\mathrm{d}x)\bar P(\mathrm{d}m) 
	\\
	& \leq  {\color{black}c_d\sigma^2} + C \biggl( 1 +  \int_{\mathcal{P}_1(\R^d)} \mathrm{d}_1^{\R^d} (\delta_0,m)^2 \bar P( \mathrm{d}m) \biggr) . 
\end{split}
\end{equation}
Now, we can choose $\vartheta'$ along a non-decreasing sequence converging pointwise to the identity function on 
$[0,+\infty)$. By monotone convergence, we obtain 
\begin{equation*}
\begin{split}
	&(\alpha - L_G) \int_{\mathcal{P}_1(\R^d)}\int_{\R^d}  |x- \mu_1(m)|^2
	m(\mathrm{d}x)\bar P(\mathrm{d}m) 
	  \leq  {\color{black}{c_d\sigma^2}} + C \biggl( 1 +  \int_{\mathcal{P}_1(\R^d)} \mathrm{d}^{\R^d}_1(\delta_0,m)^2 \bar P( \mathrm{d}m) \biggr), 
\end{split}
\end{equation*}
which completes the proof. 
	}
\end{proof}
 
\vskip 4pt

Uniqueness is much more challenging to establish. When the intensity of the idiosyncratic noise $\sigma$ is positive, this question has not been addressed yet. The following proposition is one key step in this regard and demonstrates that, under certain assumptions, the initial condition is forgotten in long time. 

\begin{proposition}\label{prop:p1}
{\color{black}For any $\alpha_F > \max(L_G, m_G)$ in Assumption~\ref{A2}, for any $\sigma_0 > 0$, there exists $\widebar \sigma > 0$, depending on $F,G,\sigma_0$ and $d$, such that for all $\sigma \leq \widebar\sigma$},
{for any $1$-Lipschitz continuous  function $\phi : \mathcal{P}_1(\R^d) \rightarrow {\mathbb R}$ (with $\mathcal{P}_1(\R^d)$ being equipped with $\mathrm{d}_1^{\R^d}$)}
and any $m, \widetilde m \in \mathcal{P}_2(\R^d)$,
\begin{equation*}
	\left|\mathscr{P}_t\phi(m) - \mathscr{P}_t\phi(\widetilde m)\right| \leq \mathcal{K}(m, \widetilde m) e^{- \mathfrak{c}t},
	{\quad t \geq 0,}
\end{equation*}
for some $\mathfrak{c} := c(G, F, \sigma, \sigma_0) > 0$
{and some ${\mathcal K}(m,\widetilde m)$, 
which is described next}. 

{Indeed, given the} choice of $\alpha_F$, there 
{exist a} constant $c^{\alpha_F}_1 > 0$, {depending on $G$ and $F$}, {and a constant $C_{d}$, depending on $d$ (and thus independent 
of $\phi$, $m$ and $\widetilde m$), such that}
\[\mathcal{K}(m, \widetilde m) := C_{d}(\mu_2(m)^{1/2} + \mu_2(\widetilde m)^{1/2})\exp\left(\frac{c^{\alpha_F}_1}{{c_{\alpha_F}}}\left(v(m)^{1/2} + v(\widetilde m)^{1/2}\right) \right), \]
where ${c_{\alpha_F}}:= {\alpha_F}- L_G$.
{We} recall that $\mu_1$, $\mu_2$ and $v$ are defined in Section~\ref{sec:notation}. 
\end{proposition}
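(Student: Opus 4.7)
By the Lagrangian representation of Proposition~\ref{prop:existence:!} and Kantorovich--Rubinstein duality applied to the $1$-Lipschitz function $\phi$ on $(\mathcal{P}_1(\R^d),\mathrm{d}_1^{\R^d})$, the task reduces to constructing a coupling $(X_t,\widetilde X_t)$ of two solutions of~\eqref{LP} with $\mathcal{L}_1(X_0)=m$ and $\mathcal{L}_1(\widetilde X_0)=\widetilde m$, and proving $\E[|X_t-\widetilde X_t|]\leq \mathcal{K}(m,\widetilde m)\,e^{-\mathfrak{c}t}$. I would use the same (or independent) idiosyncratic noises $B,\widetilde B$ for both copies, but couple the common noises $B^0,\widetilde B^0$ by reflection along the unit vector $e_t=(\bar X_t-\widetilde{\bar X}_t)/|\bar X_t-\widetilde{\bar X}_t|$, where $\bar X_t:=\E_1[X_t]$ and $\widetilde{\bar X}_t:=\E_1[\widetilde X_t]$. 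Writing $Y_t:=X_t-\bar X_t$ and $\widetilde Y_t:=\widetilde X_t-\widetilde{\bar X}_t$, the argument then splits naturally into controlling $|\bar X_t-\widetilde{\bar X}_t|$ on the one hand, and the two fluctuations $|Y_t|,|\widetilde Y_t|$ on the other.

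The fluctuation decay is the easier piece. Taking $\E_1$ in~\eqref{LP} gives $\mathrm{d}\bar X_t=\E_1[G(X_t)+F(Y_t)]\,\mathrm{d}t+\sigma_0\,\mathrm{d}B^0_t$, and subtracting yields an equation for $Y_t$ with drift $G(X_t)-\E_1[G(X_t)]+F(Y_t)-\E_1[F(Y_t)]$ and diffusion $\sigma\,\mathrm{d}B_t$. Applying It\^o to $|Y_t|^2$ and exploiting $\E_1[Y_t]=0$ to cancel the centred pieces, I would bound $Y_t\cdot(G(X_t)-G(\bar X_t))$ by $L_G|Y_t|^2$ via the Lipschitz property of $G$ in Assumption~\ref{A1}, and $Y_t\cdot F(Y_t)$ by $-\alpha_F|Y_t|^2$ via Assumption~\ref{A2} combined with $F(0)=0$, obtaining
\begin{equation*}
\frac{\mathrm{d}}{\mathrm{d}t}\E_1\bigl[|Y_t|^2\bigr]\leq 2(L_G-\alpha_F)\,\E_1\bigl[|Y_t|^2\bigr]+d\sigma^2,
\end{equation*}
hence $\E_1[|Y_t|^2]\leq e^{-2c_{\alpha_F}t}v(m)+d\sigma^2/(2c_{\alpha_F})$, and symmetrically for $\widetilde Y$. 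The hypothesis $\alpha_F>L_G$ is exactly what yields exponential decay of the fluctuations, down to a persistent residual floor of order $\sigma^2$.

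For the conditional means, subtracting their SDEs and using the reflection coupling of $B^0,\widetilde B^0$ gives
\begin{equation*}
\mathrm{d}(\bar X_t-\widetilde{\bar X}_t)=\bigl[G(\bar X_t)-G(\widetilde{\bar X}_t)+\mathcal{E}_t\bigr]\,\mathrm{d}t+2\sigma_0\, e_t\,\mathrm{d}W_t,
\end{equation*}
where $W$ is the one-dimensional Brownian motion obtained by projecting $B^0$ onto $e_t$, and $\mathcal{E}_t:=\E_1[G(X_t)-G(\bar X_t)+F(Y_t)]-\E_1[G(\widetilde X_t)-G(\widetilde{\bar X}_t)+F(\widetilde Y_t)]$. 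A second-order Taylor expansion, relying on the $C^{1,1}$ regularity of $G$ (item~(A1.3)) and of $F$ (Assumption~\ref{A2}) together with $\E_1[Y_t]=\E_1[\widetilde Y_t]=0$, shows that $|\mathcal{E}_t|$ is controlled by $\E_1[|Y_t|^2]+\E_1[|\widetilde Y_t|^2]$. Modulo this error, the equation is a non-convex SDE driven by $G$ and the common noise $\sigma_0 B^0$, which is exactly the setting of Eberle's reflection coupling~\cite{eberle2016reflection}: I would construct the corresponding concave distance $f$ calibrated to $\kappa$ (item~(A1.1)) and to $\sigma_0$, so that It\^o yields
\begin{equation*}
\mathrm{d}f(|\bar X_t-\widetilde{\bar X}_t|)\leq -\mathfrak{c}_0\,f(|\bar X_t-\widetilde{\bar X}_t|)\,\mathrm{d}t+f'(|\bar X_t-\widetilde{\bar X}_t|)\,|\mathcal{E}_t|\,\mathrm{d}t+\mathrm{d}M_t,
\end{equation*}
for some rate $\mathfrak{c}_0>0$ depending only on $G$ and $\sigma_0$, and a local martingale $M$.

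The real obstacle is closing the estimate: absorbing $\mathcal{E}_t$ using the fluctuation bound. A plain Gr\"onwall argument produces a non-decaying error of order $\sigma^2 t$ coming from the persistent floor $d\sigma^2/(2c_{\alpha_F})$, which is fatal to exponential decay. I would therefore introduce a joint Lyapunov functional of the form $\Psi_t=f(|\bar X_t-\widetilde{\bar X}_t|)\exp\!\bigl(\lambda\sqrt{\E_1[|Y_t|^2]+\E_1[|\widetilde Y_t|^2]}\bigr)$ and tune $\lambda$ so that the mean contraction, once amplified by the exponential weight, dominates the Taylor error; the smallness of $\sigma$ is precisely what keeps $\sigma^2/\mathfrak{c}_0$ below the critical threshold making this absorption possible. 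Translating back via $|X_t-\widetilde X_t|\leq|\bar X_t-\widetilde{\bar X}_t|+|Y_t|+|\widetilde Y_t|$, the exponential weight surfaces as the factor $\exp(c_1^{\alpha_F}c_{\alpha_F}^{-1}(v(m)^{1/2}+v(\widetilde m)^{1/2}))$ in $\mathcal{K}$, while the prefactor $C_d(\mu_2(m)^{1/2}+\mu_2(\widetilde m)^{1/2})$ arises from the crude bound $|\bar X_0-\widetilde{\bar X}_0|\leq\mu_2(m)^{1/2}+\mu_2(\widetilde m)^{1/2}$ combined with dimension-dependent constants from the Eberle construction. This joint absorption, mediating the interplay between the reflection coupling of $B^0$ and the persistent $B$-fluctuations, is where the bulk of the technical work lies.
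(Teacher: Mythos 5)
Your high-level picture is right: reflect the common noise along the direction of $\E_1[X_t]-\E_1[\widetilde X_t]$, use an Eberle-type concave function $f$ for the conditional means, and control the residual conditional variance via Proposition~\ref{prop:approx}. But the crucial error estimate and the closing argument are both off, and the gap is not cosmetic.

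First, the error decomposition is too coarse. You write $\mathcal{E}_t$ as the sum of the two ``closure errors'' $\E_1[G(X_t)]-G(\bar X_t)$ and $\E_1[G(\widetilde X_t)]-G(\widetilde{\bar X}_t)$ (and similarly for $F$), bound each by a second-order Taylor remainder, and conclude $|\mathcal{E}_t|\lesssim \E_1[|Y_t|^2]+\E_1[|\widetilde Y_t|^2]$. That bound is true but useless here: it is a \emph{constant} additive term of order $\sigma^2$ once $t$ is large, which does not vanish when $X=\widetilde X$. The paper instead Taylor-expands the \emph{difference} $\E_1[G(X_t)-G(\widetilde X_t)]-\bigl(G(\E_1[X_t])-G(\E_1[\widetilde X_t])\bigr)$ along the segment, writing $G(X_t)-G(\widetilde X_t)=\int_0^1 DG(\rho X_t+(1-\rho)\widetilde X_t)\,\mathrm{d}\rho\,(X_t-\widetilde X_t)$ and using the Lipschitz continuity of $DG$ to compare the Jacobian at $(\rho X_t+(1-\rho)\widetilde X_t)$ to the one at $(\rho \E_1[X_t]+(1-\rho)\E_1[\widetilde X_t])$. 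This produces a \emph{multiplicative} estimate: a small factor $\eta^\delta_t$ (of order $\sigma$ after the transient) times the unknowns $|\E_1[X_t]-\E_1[\widetilde X_t]|+\E_1[|A_t-\widetilde A_t|^2]^{1/2}$, where $A_t:=X_t-\E_1[X_t]$. In particular the residual vanishes when $X=\widetilde X$, which your bound does not.

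Second, and consequently, the Lyapunov functional $\Psi_t=f(|\bar X_t-\widetilde{\bar X}_t|)\exp(\lambda\sqrt{\cdots})$ cannot close the argument. An exponential weight multiplying $f$ can absorb errors that are \emph{proportional} to $f$, but your error is additive (it does not shrink when $f$ does). Schematically, if $\dot u\le -au+b$ with $b>0$ constant and $\dot v\le -cv+d$, no functional of the form $u\,e^{\lambda v}$ can be shown to decay: $u$ tends to $b/a>0$. The paper avoids this by tracking a genuinely different second unknown — the normed difference $\E_1[|A_t-\widetilde A_t|^2]^{1/2}$ between the two \emph{centred} processes (not the sum of their conditional variances) — which contracts at rate $\alpha_F-m_G$ under the same idiosyncratic noise, with a residual that is again multiplicative in $\eta^\delta_t\cdot|\E_1[X_t]-\E_1[\widetilde X_t]|$. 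The two coupled Grönwall inequalities on $\Theta^\delta_t:=\E_1[|A^\delta_t-\widetilde A^\delta_t|^2]^{1/2}+|\E_1[X^\delta_t]-\E_1[\widetilde X^\delta_t]|$ then give $\E_0[\Theta^\delta_t]\le \Theta_0\exp(c_3^b\eta_0/c_{\alpha_F}+(c_3^b\sigma\sqrt d/\sqrt{c_{\alpha_F}}-c)t)$, and the smallness of $\sigma$ enters only through the exponent $c_3^b\sigma\sqrt d/\sqrt{c_{\alpha_F}}-c<0$, not through absorbing an additive floor.

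Two smaller issues: the idiosyncratic noises must be taken \emph{equal} (same $B$) for the two copies, otherwise $A_t-\widetilde A_t$ carries its own non-vanishing noise and the contraction in Step~5 is lost — ``or independent'' is not an option; and the reflection direction $e_t$ is singular at $E_t=0$, which the paper handles by a $\delta$-truncated reflection matrix $(\pi_\delta,\lambda_\delta)$ before taking $\delta\to 0$, a step you would need to incorporate.
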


Proposition~\ref{prop:p1} shows that we forget the initial condition at an exponential rate for any sufficiently integrable initial conditions $m$ and $\widetilde m$. This is a key result of the paper, the proof of which is postponed to Section~\ref{proofP1}.

 As a consequence of Proposition~\ref{prop:p1}, {we get uniqueness of the invariant measure 
 and deduce that the system converges exponentially fast toward the stationary regime, both  facts being stated in the main statement below}. 
\begin{theorem}\label{th:main}
	{\color{black}For any $\alpha_F > \max(L_G, m_G)$ in Assumption~\ref{A2}, for any $\sigma_0 > 0$, there exists $\widebar \sigma > 0$, depending on $F,G,\sigma_0$ and $d$, such that for all $\sigma \leq \widebar\sigma$}, there is a unique invariant measure $\widebar P \in \mathcal{P}_2(\mathcal{P}_2(\R^d))$  {for the process $(m_t)_{t \geq 0}$, solution of Equation \eqref{eq:SFKP:sigma0}}.  More precisely, there exists {a constant $C > 0$}, such that for any probability measure $m \in \mathcal{P}(\R^d)$ and any 1-Lipschitz continuous function $\phi: \mathcal{P}_{{1}}(\R^d) \to \R$, (with $\mathcal{P}_1(\R^d)$ being equipped with $\mathrm{d}_1^{\R^d}$), we have 
	\begin{equation*}
		\left| \mathscr{P}_t\phi(m) - \int_{\mathcal{P}(\R^d)}\mathscr{P}_t\phi(\widetilde m) \widebar P(\mathrm{d} \widetilde m) \right| \leq C \bigl( 1+ \mu_2(m)^{1/2} \bigr)\exp(\lambda v(m)^{1/2})e^{-\mathfrak{c} t}, {\quad t \geq 0,}
	\end{equation*}
	with {$\lambda:= {c_1^{\alpha_F}}/{c_{\alpha_F}}>0$, and for 
	$\mathfrak{c}$, $c_{\alpha_F}$ and $c_1^{\alpha_F}$  as
	in the statement of Proposition~\ref{prop:p1}.} 
\end{theorem}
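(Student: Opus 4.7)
The plan is to deduce Theorem~\ref{th:main} from Proposition~\ref{prop:p1} (pointwise exponential contraction) and Proposition~\ref{prop:existence} (existence), by integrating the contraction against an invariant measure. Since the constant $\mathcal{K}(m,\tilde m)$ in Proposition~\ref{prop:p1} carries an exponential factor in $v(\tilde m)^{1/2}$, while Proposition~\ref{prop:existence} only provides polynomial integrability ($\bar P \in \mathcal{P}_2(\mathcal{P}_2(\R^d))$), the main obstacle is to show that $v(\cdot)$ is in fact \emph{essentially bounded} under $\bar P$. This is precisely the point where the hypotheses $\alpha_F > L_G$ and small $\sigma$ enter decisively.

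The first step would therefore be to establish the a priori bound $v(m) \leq C_* := d\sigma^2/(2 c_{\alpha_F})$ $\bar P$-almost surely. To do so, I would start from~\eqref{LP} and introduce $Y_t := X_t - \E_1[X_t]$; subtracting the conditional mean annihilates the $\sigma_0 \, \mathrm{d}B^0_t$ contribution, so $Y$ is driven only by the idiosyncratic noise $B$. Applying It\^o to $|Y_t|^2$, conditioning on $\mathcal{F}^0$, and using $\E_1[Y_t] = 0$, the Lipschitz bound on $G$ yields
\[
\E_1\bigl[ Y_t \cdot G(X_t) \bigr] = \E_1\bigl[ Y_t \cdot (G(X_t) - G(\mu_1(m_t))) \bigr] \leq L_G \E_1[|Y_t|^2],
\]
while Assumption~\ref{A2}(i) together with $F(0)=0$ gives $Y_t \cdot F(Y_t) \leq -\alpha_F |Y_t|^2$. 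Combining these with the It\^o quadratic variation $d\sigma^2 \, \mathrm{d}t$ produces the pathwise inequality
\[
\frac{\mathrm{d}}{\mathrm{d}t} v(m_t) \leq -2 c_{\alpha_F} v(m_t) + d \sigma^2,
\]
whence $v(m_t) \leq e^{-2 c_{\alpha_F} t} v(m_0) + C_*$ by Gr\"onwall. Under $\bar P$, $v(m_t)$ is stationary in distribution; since $v(m_0) < \infty$ $\bar P$-a.s.\ (by $\bar P \in \mathcal{P}_2(\mathcal{P}_2(\R^d))$), sending $t \to \infty$ forces $v(m_0) \leq C_*$ almost surely.

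Given this bound, the convergence estimate follows by writing, for $m \in \mathcal{P}_2(\R^d)$ (the claim being vacuous otherwise) and a 1-Lipschitz $\phi$,
\[
\mathscr{P}_t \phi(m) - \int \mathscr{P}_t \phi(\tilde m) \, \bar P(\mathrm{d}\tilde m) = \int \bigl[ \mathscr{P}_t \phi(m) - \mathscr{P}_t \phi(\tilde m) \bigr] \bar P(\mathrm{d}\tilde m),
\]
applying Proposition~\ref{prop:p1} under the integral, and controlling $\int \mathcal{K}(m,\tilde m) \, \bar P(\mathrm{d}\tilde m)$ by $C(1 + \mu_2(m)^{1/2}) \exp(\lambda v(m)^{1/2})$ via the $\bar P$-a.s.\ bound $v(\tilde m) \leq C_*$ and Cauchy--Schwarz on $\int \mu_2(\tilde m)^{1/2} \bar P(\mathrm{d}\tilde m) \leq (\int \mu_2 \, \mathrm{d}\bar P)^{1/2}$. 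Uniqueness follows from the same argument applied to two invariant measures $\bar P_1, \bar P_2 \in \mathcal{P}_2(\mathcal{P}_2(\R^d))$: invariance gives
\[
\int \phi \, \mathrm{d}\bar P_1 - \int \phi \, \mathrm{d}\bar P_2 = \int \bigl[\mathscr{P}_t \phi(m) - \mathscr{P}_t \phi(\tilde m)\bigr] (\bar P_1 \otimes \bar P_2)(\mathrm{d}m, \mathrm{d}\tilde m),
\]
and Proposition~\ref{prop:p1} together with the a priori bound on both $\bar P_i$ produces a bound of order $C' e^{-\mathfrak{c} t}$; sending $t \to \infty$ enforces $\int \phi \, \mathrm{d}\bar P_1 = \int \phi \, \mathrm{d}\bar P_2$ for every 1-Lipschitz $\phi$ on $\mathcal{P}_1(\R^d)$, hence $\bar P_1 = \bar P_2$ by a standard density argument.
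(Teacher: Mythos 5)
Your proposal is correct, and it follows a genuinely different route than the paper's for the key integrability step. The paper's own proof invokes an auxiliary lemma (Lemma~\ref{lemma:estimate}) that constructs a specific invariant measure $\widebar P$ as a limit of Ces\`aro averages $\mathcal{Q}_T = T^{-1}\int_0^T P_t \,\mathrm{d}t$ started from a well-chosen initial condition, and then uses lower semicontinuity plus the variance bound \eqref{eq:v:mt} to deduce finiteness of the exponential moment $\int \exp(2c_2 v(\widetilde m)^{1/2})\,\widebar P(\mathrm{d}\widetilde m)$; after that, the convergence estimate follows by integrating Proposition~\ref{prop:p1} against $\widebar P$ and applying H\"older, and uniqueness is left implicit. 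You instead prove the stronger statement that $v(\cdot)$ is $\widebar P$-\emph{essentially bounded} for \emph{any} invariant $\widebar P\in\mathcal{P}_2(\mathcal{P}_2(\R^d))$, by combining the pathwise dissipative inequality $v(m_t)\le v(m_0)e^{-2c_{\alpha_F}t}+C_*$ (which is exactly the content of Proposition~\ref{prop:approx}, rederived correctly) with stationarity: $\mathbb{P}_0(v(m_0)>C_*+\varepsilon)=\mathbb{P}_0(v(m_t)>C_*+\varepsilon)\le \mathbb{P}_0(v(m_0)e^{-2c_{\alpha_F}t}>\varepsilon)\to 0$. This is a cleaner argument: it replaces the Ces\`aro/semicontinuity construction by a one-line dissipativity-plus-invariance trick, gives essential boundedness rather than mere exponential integrability, and applies to every invariant measure simultaneously, which makes the uniqueness argument via $\widebar P_1\otimes\widebar P_2$ immediate rather than something the reader must reconstruct. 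The only very minor point worth flagging is that you quote $C_*=d\sigma^2/(2c_{\alpha_F})$ while the paper's Proposition~\ref{prop:approx} states the weaker $d\sigma^2/c_{\alpha_F}$; your sharper constant does follow from Gr\"onwall's inequality, and in any case only the finiteness of $C_*$ matters for the argument, so this is not a gap.
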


{\color{black}
\begin{remark}
\label{rem:choice:parameters}
Let us comment here on the \textcolor{black}{choice of parameters in the above statement}. First, it is important to note that
\textcolor{black}{the type and strength of the 
 interaction play a central role in our framework. The interaction, in both its form and intensity ($\alpha_F \geq \widebar{\alpha}$), forces 
the long-time behaviour of the entire process $(m_t)_{t \geq 0}$ to be  dictated by its mean $(\mu_1(m_t))_{t \geq 0}$, which is the cornerstone of the proof}. \textcolor{black}{Asking the interaction to be strong, as we require in the condition $\alpha_F \geq \widebar{\alpha}$, may seem surprising at first sight}. Indeed, \textcolor{black}{this is fundamentally different from the standard picture that exists for models without common noise, where strong interactions are typically expected to give rise to several invariant measures}. 
\textcolor{black}{In our case, the presence of the common noise, even of a possibly small intensity, induces a phase transition. 
For any positive value of $\sigma_0$ and for a sufficiently large interaction (independently of the value of $\sigma_0$), we can indeed find $\sigma >0$ such that uniqueness of the invariant measure is ensured in presence of a common noise of intensity 
$\sigma_0$ but is lost when the common noise is disabled. This makes a conceptual difference with the previous results obtained on the subject in  
\cite{jianhai2024long}, where 
uniqueness is restored when the global contribution of the two noises is large enough with respect to the strength of the interaction. In comparison, $\sigma$ and $\sigma_0$ 
have opposite roles to each other in our approach.}
%
%
%
%
\end{remark}
}

	\subsection{Discussions and examples}\label{sec:motivation} In this section, we present two explicit examples in which 
	{common noise forces} uniqueness of the invariant measure. The first example fits 
	{explicitly} the framework of this work {and the second one may be seen as
	a variant of 
	\eqref{eq:SFKP:sigma0}. At last, we also provide a counter-example where uniqueness does not hold despite the presence of the common noise.}

\subsubsection*{A prototype: dynamics driven by a confining potential and a linear interaction.}
{Let us consider the
 case $G(x)=-\nabla V(x)$, $x \in {\mathbb R}^d$, for a differentiable function $V : {\mathbb R}^d \rightarrow {\mathbb R}$,
 and $F(x) = - \alpha x$, $x \in {\mathbb R}^d$, for a certain $\alpha >0$. Then, 
 \eqref{eq:SFKP:sigma0} becomes}
 \begin{equation}\label{eq:example1}
	\mathrm{d}_tm_t = \frac{{\sigma^2 + } \sigma_0^2}{2}\Delta m_t\mathrm{d}t + \nabla\cdot\left[m_t\Bigl(\nabla V + \alpha\left(\cdot - \mu_1(m_t) \right) \Bigr)\right] \mathrm{d}t- \sigma_0\nabla m_t\cdot\mathrm{d}B^0_t,
	\quad {t \geq 0}. 
\end{equation}
{When $\sigma_0 = 0$, 
this equation reduces to} 
\begin{equation}
\label{eq:example1:sigma=0}
	\partial_t m_t = \frac{{\sigma^2}}{2}\Delta m_t + \nabla\cdot\left[m_t\Bigl(\nabla V + \alpha\left(\cdot - \mu_1(m_t) \right) \Bigr)\right]. 
\end{equation}
When $V$ is a double well potential, the {latter} equation 
{may have several stationary solutions}. For instance, 
{if $\sigma=0$ (in addition to the assumption $\sigma_0=0$), 
any critical point $x_0$ of $V$ induces a stationary solution, concentrated 
at $x_0$. In the standard example where 
$d=1$ and $V(x) = |x|^4/4 - |x|^2/2$ 
(see for instance \cite{1983JSP....31...29D,herrmann2010non}),  
those stationary solutions are the Dirac masses $\delta_1$, $\delta_0$ 
and $\delta_{-1}$. 
Non-uniqueness persists when $\sigma$ is strictly positive but small:
with the same potential $V$ and for $\sigma$ less than a certain threshold 
$\bar \sigma$, there are three invariant measures, respectively centered around 
$1$, $0$ and $-1$, 
to the McKean-Vlasov SDE
\begin{align*}
	\mathrm{d}X_t = - V'(X_t)\mathrm{d}t - \alpha(X_t - \E_1[X_t]) \mathrm{d}t + \sigma \mathrm{d}B_t, \quad t \geq 0.
\end{align*}
Lack of uniqueness can be explained as follows. There is a competition between 
the noise $B$ (weighted by the 
intensity factor $\sigma$) and the interaction term function $(x,m) \mapsto 
-\alpha (x - \mu_1(m))$. On the one hand, the process $X$ is attracted by the minimizers of $V$ and by its expectation. 
On the other hand, it is subjected to the diffusive effect of the noise. Whenever the coefficient $\sigma$ is too small, the 
interaction term dominates. Any invariant measure has a small variance and must be concentrated around a minimizer of 
$V$, hence forcing the multiplicity of the stationary solutions to \eqref{eq:example1:sigma=0}.}


{When $\sigma_0>0$ but $\sigma=0$ (and under conditions similar to 
Assumption~\ref{A1}), whenever $\alpha$ is assumed large enough, the second author}
has shown in 
 \cite{maillet2023note} 
that the process $(m_t)_{t \geq 0}$ {admits} a unique invariant measure $P_0 \in \mathcal{P}(\mathcal{P}(\R^d))$,
{and thus established restoration of uniqueness in this situation. In this particular case,} the invariant measure is supported by Dirac masses:
\begin{equation*}
	\widebar{P}_0(\mathrm{d}m) = \int_{\R^d} \delta_{\delta_a}{(\mathrm{d}m)} m^*(\mathrm{d}a),
\end{equation*}
where $m^*$ is solution of the stationary Fokker-Planck equation $(\sigma_0^2/2) \Delta m^* {
+ \nabla\cdot(m^*\nabla V)=0}$. 
{Moreover, as consequence of the contracting properties of $F$, it can be shown} that 
\begin{equation*}
	v(m_t) \to 0,
\end{equation*}
when $t \to +\infty$, {which implies that, asymptotically (in time), the solution $X$ 
to 
\eqref{LP}
coincides with its conditional expectation given the common noise, hence justifying the shape of the measure $\widebar{P}_0$}. 

{In the case $\sigma_0>0$ and $\sigma$ small but (strictly) positive, our result extends 
the analysis carried out in \cite{maillet2023note}. However, the long-time behaviour of $(v(m_t))_{t \geq 0}$ is different 
and the latter does not tend to $0$ as $t$ tends to $\infty$. This difference is substantial and makes the proof significantly more challenging in our analysis. The argument is explained in the next two sections. In short, one shows that the 
long-time behavior of the
conditional mean 
$(\mu_1(m_t))_{t \geq 0}$ coincides with that one of the standard SDE
$\mathrm{d}Y_t = -\nabla V(Y_t) \mathrm{d} t +  \sigma_0\mathrm{d}B^0_t$,
	${t \geq 0}$ up to an error that is small with $\sigma$ 
	and $1/\alpha$. Although the fluctuation 
	$(X_t - {\mathbb E}_1(X_t) = X_t - \mu_1(m_t))_{t \geq 0}$ does not vanish in long time, which is 
	the main mathematical difficulty here, 
	the regime $0 < \sigma \ll 1$ shares many conceptual similarities with the case $\sigma=0$. In particular, the long-time behavior of 
	$(m_t)_{t \geq 0}$ is mostly dictated by the ergodic properties of the standard SDE obtained by putting 
$F \equiv 0$ and $\sigma=0$ in 
\eqref{LP}.  This example is typical of our study.}
When the value of $\sigma$ is significantly large and $\alpha$ is relatively small, it is straightforward to establish the uniqueness of the invariant measure using coupling arguments on the idiosyncratic noise, as in \cite{durmus2020elementary}. Still, it is fair to say that we do not have, at this stage, a good understanding of the behaviour of the model between these two regimes.

\subsubsection*{A variant: dynamics driven by a quadratic potential and a nonlinear first order mean field term}
{In this paragraph, we cook up a variant 
of the model covered by 
Theorem~\ref{th:main} that does not exactly fit 
the form postulated in 
\eqref{eq:SFKP:sigma0} but that can be studied 
in a similar manner. 
This example is built up as a perturbation of an Ornstein-Uhlenbeck process. Precisely, 
we consider the stochastic Fokker-Planck equation} 
\begin{equation}\label{eq:ex:2}
	\mathrm{d}_tm_t = \frac{\sigma^2 + \sigma_0^2}{2}\Delta m_t\mathrm{d}t + \nabla \cdot 
	\left[m_t\bigl(ax - f(\mu_1(m_t)\bigr) \right]\mathrm{d}t - \sigma_0\nabla m_t\cdot\mathrm{d}B^0_t, {\quad t \geq 0}, 
\end{equation}
{whose} probabilistic counterpart {writes}
\begin{equation*}
	\mathrm{d}X_t = -a X_t \mathrm{d}t + f(\E_1[X_t])\mathrm{d}t + \sigma\mathrm{d}B_t + \sigma_0\mathrm{d}B^0_t,
	\quad {t \geq 0}, 
\end{equation*}
for some $a > 0$ and $f : \R^d \to \R^d$. {Equivalently,} the conditional law 
{$(\mathcal{L}_1(X_t))_{t \ge 0}$} of $X$ given 
the common noise $(\mathcal{L}_1(X_t))_t$ is a solution to \eqref{eq:ex:2}. 

{Obviously, the mean field term $f(\E_1[X_t])$ 
cannot be put in the form $F(X_t - \E_1[X_t])$, from which we see that this example is outside the scope of 
Theorem~\ref{th:main}. That said, it is in fact not that far from 
the framework addressed in this article 
and, in particular, it obeys phenomena similar  to those underpinning the proof of 
Theorem~\ref{th:main}.}

{Actually, 
the long-time analysis of equation 
\eqref{eq:ex:2} is rather straightforward}. 
When $\sigma_0 = 0$,  any invariant measure $m$ is solution to 
\begin{equation*}
	\frac{\sigma^2}{2}\Delta m - \nabla \cdot\left[m\bigl(f(\mu_1(m)) - ax\bigr)\right] = 0, \quad x \in {\mathbb R}^d. 
\end{equation*}
{By integrating with respect to $x$}, the previous equation {leads to the condition}
\begin{equation}\label{CN}
	f(\mu_1(m)) = a\mu_1(m),
\end{equation}
{which is in fact sufficient in the following sense: once $\mu_1(m)$ has been found, 
the entire measure $m$ can be defined as the invariant measure of the 
Ornstein-Uhlenbeck dynamics 
$\mathrm{d}X_t = -a (X_t-\mu_1(m)) \mathrm{d}t + \sigma \mathrm{d}B_t$,
$t \geq 0$. Therefore, Equation \eqref{CN} says that} there exist several invariant measures as soon as $f/a$ admits several fixed points in $\R^d$. 

{In presence of a} common noise (i.e., $\sigma_0 > 0$), {the conditional mean 
$(\mu_1(m_t))_{t \geq 0}$ solves the SDE}
\begin{equation}
\label{eq:SDE:mu1mt:variant}
	\mathrm{d}\mu_1(m_t) = -a\mu_1(m_t)\mathrm{d}t + f(\mu_1(m_t))\mathrm{d}t + \sigma_0\mathrm{d}B^0_t, 
	\quad {t \geq 0},
\end{equation}
{and under appropriate confining conditions on 
the \textit{effective} drift $x \mapsto -a x + f(x)$ (which may be compatible with the fact that 
\eqref{CN} has several fixed points), 
\eqref{eq:SDE:mu1mt:variant}
admits a unique invariant measure, say $m^*$ (on $\R^d$), which implies that any two invariant measures on 
${\mathcal P}_1(\R^d)$ (in the sense of Definition~\ref{D2}) must have the same marginal law 
by the projection $m \mapsto \mu_1(m)$. As above, the entire invariant measure (on 
${\mathcal P}_1(\R^d)$) can be   recovered   by 
observing that 
$\mathrm{d}(X_t- {\mathbb E}_1(X_t)) = -a (X_t - {\mathbb E}_1(X_t)) \mathrm{d}t + \sigma \mathrm{d}B_t$,
 $t \geq 0$. This shows that the invariant measure is 
 \begin{equation}
 \label{eq:widebarP:variant} 
 \widebar{P}(\mathrm{d}m) = \int_{\R^d} \delta_{{\mathcal N}_d(\theta,[\sigma^2/(2a)] I_d)}{(\mathrm{d}m)} m^*(\mathrm{d} \theta),
\end{equation} 
where ${\mathcal N}_d(\theta,[\sigma^2/(2a)] I_d)$ is the $d$-dimensional Gaussian law
 with $\theta$ as mean and $\sigma^2/(2a) I_d$ as covariance 
($I_d$ standing for the $d$-dimensional identity matrix). In particular, there is a unique invariant measure even though 
\eqref{CN} has several fixed points.}

{The spirit of this new example is clear: similar to the prototype addressed in 
the previous paragraph, its long-time analysis is in fact governed by the 
simpler Fokker-Planck equation 
\eqref{eq:SDE:mu1mt:variant}
for the sole conditional mean. Here the situation is even simpler because the 
residual fluctuation 
$(X_t- {\mathbb E}_1(X_t))_{t \geq 0}$
has a trivial behavior (whilst the analysis of the residual is non-trivial in the prototype example).}

\subsubsection*{Combining the two examples}
{For sure, the reader may wonder about a global framework that would 
cover both the prototype and the variant examples. While this would be indeed possible to extend in such a way the current setting,  we have decided not 
to go up to this level of generality in order to keep the presentation and the notation at a reasonable level. In order to guess what  the more general form of 
\eqref{eq:SFKP:sigma0} should be, one first needs to understand which of the two coefficients $F$ and $G$ 
(in 
\eqref{eq:SFKP:sigma0}) 
correspond respectively to 
$x \mapsto - a x$ and 
$m \mapsto f(\mu_1(m))$  in 
\eqref{eq:ex:2}. At first sight, one may be tempted to regard 
$x \mapsto -a x$ as a specific example of $G$, but this is the wrong choice. 
In fact, the correct answer is to write $-ax$ as $-a(x- \mu_1(m))$, which prompts us to associate the 
coefficient $a$ in this model with the coefficient $\alpha$ in the prototype example. Next, 
we should see the remaining coefficient $m \mapsto -a \mu_1(m) + f(\mu_1(m))$ 
as a \textit{new} $G$, with the subtlety that, in our global framework, $G$ must be allowed to depend on 
both $x$ and $\mu_1(m)$. In clear, the new version of \eqref{eq:SFKP:sigma0} should be 
\begin{equation}\label{eq:SFKP:sigma0:ext}
	\mathrm{d}_t  m_t = \nabla \cdot \left(\frac{\sigma^2 + \sigma_0^2}{2} \nabla m_t - m_t \left[ G\bigl(\mu_1(m_t),\cdot\bigr) + F(\cdot - \mu_1(m_t))\right]\right)\mathrm{d}t -\sigma_0 \nabla m_t \cdot \mathrm{d}B^0_t, 
\end{equation}
for a function $G : {\mathbb R}^d \times {\mathbb R}^d \rightarrow {\mathbb R}^d$. Back to the prototype example 
addressed in the previous paragraph, this says that the long-time behavior
of $(m_t)_{t \geq 0}$ should now be compared with those of the standard SDE 
$\mathrm{d}Y_t = G(Y_t,Y_t) \mathrm{d} t +  \sigma_0\mathrm{d}B^0_t$, up to some fluctuation terms that 
one may expect to control properly if $F$ is sufficiently decreasing. In this context, what 
truly 
matters are the confining properties of the \textit{doubled} mapping $(x,x) \mapsto G(x,x)$.}

{We strongly believe that Theorem~\ref{th:main} could be extended to this setting, with 
a similar analysis. From a technical point of view, the gain would be limited: the difficulty would not come 
from the variable $\mu_1(m)$ in $G$ but from the variable $x$ (precisely because the strategy is to replace in the end 
the argument $X_t$ by its conditional expectation ${\mathbb E}_1(X_t)$ in \eqref{LP}). This difficulty is already present in 
the prototype example \eqref{eq:example1} and, in contrast, it is clear from the example
\eqref{eq:ex:2} that the presence 
of the variable $\mu_1(m)$ in $G$ does not raise any substantial difficulty. 
This explains how choice to work on \eqref{eq:SFKP:sigma0} (and not on \eqref{eq:SFKP:sigma0:ext}).}

%
\subsubsection*{An example without uniqueness recovery} As highlighted earlier, the finite-dimensional nature of the common noise constitutes a significant limitation {to obtain ergodic properties 
on the process $(m_t)_{t \geq 0}$ 
(see in contrast the recent work 
\cite{delarue2024rearranged} by the first author for a 1d case with an infinite dimensional noise).
In particular, this is the thrust of our work to identify one class of mean field dynamics 
for which the common noise really helps in this matter. However, it is clear that there is no chance to get uniqueness of the invariant measure for a generic class of 
stochastic Fokker-Planck SPDEs forced by a \textit{simple} noise like $B^0$.} 

A counter-example in dimension $d=1$ is 
\begin{equation*}
	\mathrm{d}_t m_t = \frac{\sigma(v(m_t))^2 + \sigma_0^2}{2} \partial_{xx}^2 m_t\mathrm{d}t + \partial_x{( x m_t)} \mathrm{d}t  - \sigma_0 \partial_x m_t\mathrm{d}B^0_t,
\end{equation*}
for some function $\sigma : [0, +\infty) \to (0, +\infty)$. Whenever $\sigma_0 = 0$ and 
{the function} $\sigma^2(\cdot)/4$ admits several fixed points, 
{the stationary solutions to the above equation  cannot be unique. In this case,} $(m_t)_{t\geq 0}$ is indeed solution of 
\begin{equation}\label{eq:sig0:example3}
	\partial_t m_t  = \frac{\sigma(v(m_t))^2}{2}\partial_{xx}^2\left( m_t\right) + \partial_x(x m_t), \quad t \geq 0. 
\end{equation}
{Then, similar to the stationary solutions found
in the previous example, 
the stationary solutions (equivalently, the invariant measures of 
\eqref{LP}) are here solutions of}
\begin{equation*}
{ \frac{\mathrm{d} m}{\mathrm{d} x}(x)} = \exp\left(-2\frac{x^2}{\sigma^2(v(m))}\right), \quad x \in {\mathbb R}. 
\end{equation*}
Then, one can show that, {if the function $x \mapsto \sigma^2(x)/4$ admits several fixed points,} then 
there are  several stationary solutions to \eqref{eq:sig0:example3}. 

When adding common noise, the situation does not get better. 
{For instance, assuming that the initial condition $P_0 =\delta_{\delta_0}$,} one can show that for all $t > 0$, $\mathbb{P}_0$-almost surely, $m_t$ is a {(random)} Gaussian probability measure with parameters
\begin{equation*}
\left\{
\begin{aligned}
	& \mu_1(m_t) = \sigma_0\int_0^t e^{s-t} \mathrm{d}B^0_s,\\
	& v(m_t) = \int_0^t e^{2(s-t)}\sigma^2(v(m_s)) \mathrm{d}s.
\end{aligned}
\right.
\end{equation*}
Then, it is not difficult to see that $(\mu_1(m_t))_{t\geq0}$ is an {ergodic process
(it is in fact an Ornstein-Uhlenbeck process with a negative mean-reverting parameter). As for 
the dynamics of the variance $(v(m_t))_{t\geq0}$}, we get
\begin{equation*}
	\partial_t v(m_t) = -2v(m_t) + \frac{1}{2}\sigma^2(v(m_t)), \quad t \geq 0.
\end{equation*}
{Obviously, the above equation has several stationary solutions (say $\sigma^*$) if 
the function
$x \mapsto \sigma^2(x)/4$ admits several fixed points. 
In the latter case, 
$(m_t)_{t \geq 0}$ has several invariant measures, whose form 
is similar to 
\eqref{eq:widebarP:variant} except that $\sigma$ therein is now replaced by any $\sigma^*$.}
In this specific example, it is noteworthy that {the common noise has no influence on the mean field term 
$(v(m_t))_{t \geq 0}$ (which is completely deterministic). Clearly, this is a consequence of the additive structure 
of the noise, the effect of which is just to shift (or to translate) the measures 
$(m_t)_{t \geq 0}$.}

\subsubsection*{Conclusion} {All these examples illustrate that
the form of the mean field interaction in 
\eqref{eq:SFKP:sigma0}, based on the sole (conditional) mean state 
is key in the derivation of 
Theorem~\ref{th:main}.
In particular, this structure makes it possible to transmit the noise from the dynamics of the 
state variable $(X_t)_{t \geq 0}$ in \eqref{LP} to its conditional 
mean $({\mathbb E}_1(X_t))_{t \geq 0}$. As suggested in the discussion on the prototype example, the next step is 
to prove that the long run behavior of 
$({\mathbb E}_1(X_t))_{t \geq 0}$ is dictated by the ergodic properties of the SDE 
$\mathrm{d}Y_t = G(Y_t) \mathrm{d} t +  \sigma_0\mathrm{d}B^0_t$, 
at least if 
$\alpha$ is large and $\sigma$ is small. This is the main line of the proof that is presented in the next two sections.}


\section{Proof of Theorem~\ref{th:main}}\label{sec:th1}

{In this section, we explain how to  deduce Theorem~\ref{th:main}
from Proposition~\ref{prop:p1}, but the proof of the latter  is postponed to Section~\ref{proofP1}.} 

{Before moving on to the proof of Theorem~\ref{th:main}, we need two preliminary estimates: the first one addresses the conditional variance of the solution to 
\eqref{eq:SFKP:sigma0}, and the second one the conditional exponential moments of the solution.}

\subsection{First preliminary estimate: conditional variance}\label{subsec:31}
{Given a solution 
$X$ to Equation \eqref{LP}, we first focus on} the conditional expectation given the common noise $(\E_1[X_t])_{t\geq 0}$:

\begin{proposition}\label{prop:approx}
	Let us consider a $d$-dimensional Brownian motion $\beta^0$ defined on $(\Omega_0, \mathcal{F}^0, \mathbb{P}_0)$ and adapted to the filtration $\mathbb{F}^0$. 
{\color{black} Let Assumptions~\ref{A1} and~\ref{A2} be in force with the additional constraint ${c_{\alpha_F}}:= {\alpha_F} - L_G > 0$. Let us consider $P_0 \in \mathcal{P}(\mathcal{P}_1(\R^d))$ and $X_0$ such that $\mathcal{L}_1(X_0) = m_0$ a.s, for some random measure $m_0$ satisfying $\mathcal{L}_0(m_0) = P_0$. 
	Then, for $X$ with dynamic given by}
	\begin{equation*}
		\left\{ \begin{array}{ll}
			\mathrm{d}X_t = G(X_t) \mathrm{d}t + F(X_t-\E_1[X_t])\mathrm{d}t + \sigma \mathrm{d}B_t + \sigma_0\mathrm{d}\beta^0_t\\
			X_{t|t=0} = X_0,
		\end{array}\right. 
	\end{equation*}
we have for all $t \geq 0$, 
	\begin{equation}\label{eq:approx}
		\mathbb{E}_1\left[|X_t - \E_1[X_t]|^2\right] \leq \E_1\left[|X_0 - \E_1[X_0]|^2\right]e^{-2{c_{\alpha_F}} t} + \frac{d\sigma^2}{{c_{\alpha_F}}},
	\end{equation} 
	where the previous inequality holds $\mathbb{P}_0-a.s$. 
\end{proposition}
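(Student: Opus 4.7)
The plan is to introduce the ``centered'' process $Z_t := X_t - \E_1[X_t]$ and to derive a differential inequality for the conditional $L^2$-norm $\E_1[\vert Z_t \vert^2]$. The starting point is that, by taking the conditional expectation $\E_1[\,\cdot\,]$ of the SDE for $X_t$, the idiosyncratic martingale $\sigma\,\mathrm{d}B_t$ disappears (since $B$ is independent of $\mathcal{F}^0$) whereas the common-noise term $\sigma_0\,\mathrm{d}\beta^0_t$ is preserved (since $\beta^0$ is $\mathbb{F}^0$-adapted). More precisely, one obtains (at least formally, but this can be made rigorous by Fubini and the Lipschitz assumptions on $F,G$, which ensure the integrability)
\begin{equation*}
\mathrm{d}\E_1[X_t] = \E_1[G(X_t)]\,\mathrm{d}t + \E_1[F(Z_t)]\,\mathrm{d}t + \sigma_0\,\mathrm{d}\beta^0_t,
\end{equation*}
so that
\begin{equation*}
\mathrm{d}Z_t = \bigl(G(X_t) - \E_1[G(X_t)]\bigr)\mathrm{d}t + \bigl(F(Z_t) - \E_1[F(Z_t)]\bigr)\mathrm{d}t + \sigma\,\mathrm{d}B_t.
\end{equation*}

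Next I would apply It\^o's formula to $\vert Z_t \vert^2$ and take the conditional expectation $\E_1[\,\cdot\,]$. The stochastic integral against $B$ has zero $\E_1$, while the quadratic variation of the Brownian part contributes $d\sigma^2\,\mathrm{d}t$. Using the fact that $\E_1[Z_t] = 0$ almost surely, the two drift terms simplify: for the $G$-term,
\begin{equation*}
\E_1\bigl[Z_t \cdot \bigl(G(X_t) - \E_1[G(X_t)]\bigr)\bigr] = \E_1\bigl[Z_t \cdot \bigl(G(X_t) - G(\E_1[X_t])\bigr)\bigr] \leq L_G\,\E_1[\vert Z_t\vert^2],
\end{equation*}
by Cauchy-Schwarz and Assumption~\ref{A1}(A1.2); for the $F$-term, using $F(0)=0$ and the $\alpha_F$-dissipativity of $F$ from Assumption~\ref{A2},
\begin{equation*}
\E_1\bigl[Z_t \cdot \bigl(F(Z_t) - \E_1[F(Z_t)]\bigr)\bigr] = \E_1[Z_t \cdot F(Z_t)] \leq -\alpha_F\,\E_1[\vert Z_t\vert^2].
\end{equation*}
Combining these two bounds with $c_{\alpha_F} = \alpha_F - L_G > 0$ yields the scalar differential inequality
\begin{equation*}
\frac{\mathrm{d}}{\mathrm{d}t}\E_1[\vert Z_t\vert^2] \leq -2 c_{\alpha_F}\,\E_1[\vert Z_t\vert^2] + d\sigma^2,
\end{equation*}
and Gr\"onwall's lemma then delivers \eqref{eq:approx} (in fact with the slightly sharper constant $d\sigma^2/(2c_{\alpha_F})$).

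The main technical point, rather than an obstacle, is the justification that all the manipulations above can be performed $\mathbb{P}_0$-a.s. and not merely in expectation. The Lipschitz assumptions on $F$ and $G$, together with the standard well-posedness results recalled in Proposition~\ref{prop:existence:!}, ensure that $X$ has finite moments of every order uniformly on compact time intervals, which legitimates Fubini's theorem in the derivation of the dynamics of $\E_1[X_t]$, the conditional version of It\^o's formula, and the conditional Gr\"onwall argument on the (continuous) process $(\E_1[\vert Z_t\vert^2])_{t\geq 0}$. Note that the mechanism at work here is precisely the one heralded in Section~\ref{subsec:31}: the contracting effect of $F$ dominates the possibly non-contracting $G$ through the gap $c_{\alpha_F}>0$, and the residual fluctuation due to the idiosyncratic noise is precisely the constant $d\sigma^2/c_{\alpha_F}$, which will be forced to be small in the main theorem by taking $\sigma$ small.
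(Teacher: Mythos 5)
Your proof is correct and follows essentially the same route as the paper: centre the process at its conditional mean, apply It\^o's formula to the squared norm, use $\E_1[Z_t]=0$ to reduce the two drift terms, bound the $G$-term via its Lipschitz constant and the $F$-term via its $\alpha_F$-dissipativity, and conclude with Gr\"onwall. Your remark that Gr\"onwall in fact yields the sharper constant $d\sigma^2/(2c_{\alpha_F})$ is accurate; the paper simply records the cruder bound $d\sigma^2/c_{\alpha_F}$, which suffices for the subsequent estimates.
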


	\begin{remark}		

$\bs{(1)}$ Inequality \eqref{eq:approx} does not depend on the choice of the Brownian motion $(\beta^0_t)_{t \geq 0}$ in the dynamics of $X$. 
{Accordingly, it can be recast in terms of the solution 
\eqref{eq:SFKP:sigma0} in the form 
\begin{equation}
\label{eq:v:mt}
	v(m_t) \leq v(m_0)e^{-2 {c_{\alpha_F}}t} + \frac{d\sigma^2}{{{c_{\alpha_F}}}}, \quad t \geq 0. 
\end{equation}}
\vspace{-10pt}

$\bs{(2)}$ {The result can be interpreted as follows}. When $\sigma=0$, the result is consistent with \cite{maillet2023note} and says that the process  $(X_t)_{t\geq 0}$ is attracted by its conditional expectation in the long run. When $\sigma > 0$, the result provides a sharp estimate of the residual {conditional} variance. 

$\bs{(3)}$ {Back to 
Theorem 
\ref{th:main}}, 
the challenge is {precisely} to prove that the long-time behaviour of $X$ is dictated by the long-time behaviour of $\E_1[X]$ even though the residual term is not zero. 
	\end{remark}

\begin{proof}[Proof of Proposition~\ref{prop:approx}]
	Using the dynamics of the process $X$, one can write
	\begin{equation*}
		\mathrm{d}(X_t-\E_1[X_t]) = (G(X_t) - \E_1[G(X_t)])\mathrm{d}t  + (F(X_t-\E_1[X_t]) - \E_1[ F(X_t - \E_1[X_t])])\mathrm{d}t + \sigma\mathrm{d}B_t
	\end{equation*}
	Then, applying Itô's formula, it comes
	\begin{align*}
		\mathrm{d}|X_t-\E_1[X_t]|^2 =  & \: 2(X_t - \E_1[X_t])\cdot(G(X_t) - \E_1[G(X_t)])\mathrm{d}t\\
		 +& \: 2(X_t - \E_1[X_t])\cdot(F(X_t-\E_1[X_t]) - \E_1[F(X_t- \E_1[X_t])])\mathrm{d}t \\
		 + & \: 2\sigma (X_t - \E_1[X_t])\cdot\mathrm{d}B_t \\
		 + & \: d \sigma^2 \mathrm{d}t
	\end{align*}
	Using the fact that $G$ is Lipschitz continuous and taking expectation, we get that
	\begin{align*}
		\frac{\mathrm{d}}{\mathrm{d}t}\E_1\left[|X_t-\E_1[X_t]|^2\right] \leq 2L_G\mathbb{E}_1\left[|X_t - \E_1[X_t]|^2\right] + 2\E_1[(X_t -\E_1[X_t])\cdot F(X_t - \E_1[X_t])] + d\sigma^2. 
	\end{align*}
	Moreover, we can write 
	\begin{align*}
		\E_1\left[(X_t -\E_1[X_t])\cdot F(X_t - \E_1[X_t])\right] & = \E_1\left[(X_t -\E_1[X_t])\cdot \left(F(X_t - \E_1[X_t]) - F(0)\right)\right] \\
		& \leq -{\alpha_F}\: \E_1\left[|X_t - \E_1[X_t]|^2\right],
	\end{align*}
	where we recall that ${\alpha_F}> 0$ is defined in Assumption~\ref{A2}.  Then, 
	\begin{equation*}
		\frac{\mathrm{d}}{\mathrm{d}t}\E_1[|X_t-\E_1[X_t]|^2] \leq -2({\alpha_F}- L_G)\mathbb{E}_1[|X_t - \E_1[X_t]|^2] + d\sigma^2. 
	\end{equation*}
	Finally, we get the result using Grönwall's Lemma, as soon as ${\alpha_F}> L_G$. 
\end{proof}
\subsection{{Second preliminary estimate: conditional exponential moments}}

Let us state the following lemma which allows us to control the exponential moments of the process $(m_t)_{t\geq 0}$ under {one} invariant measure $\widebar P$ {(which will be in the end `the' invariant measure)}:
\begin{lemma}\label{lemma:estimate}
Under Assumptions \ref{A1} and \ref{A2}, assuming that $\alpha_F > L_G$, 
{there exists an invariant measure $\widebar{P} \in \mathcal{P}_2(\mathcal{P}_2(\R^d))$  for the process $(m_t)_{t \geq 0}$,
such that 
\begin{equation}\label{eq:finite2}
	\int_{\mathcal{P}(\R^d)} \exp\left( 2c_2 v(\widetilde m)^{1/2}\right) \widebar P(\mathrm{d}\widetilde m)< +\infty, 
\end{equation}
	where $c_2 := {c_1^{\alpha_F}}/{c_{\alpha_F}}$.}
\end{lemma}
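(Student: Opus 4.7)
The plan is to reduce the exponential moment bound to an almost sure boundedness of the conditional variance functional $v$ under $\widebar P$. By Proposition~\ref{prop:existence}, under the standing hypothesis $\alpha_F > L_G$, there exists an invariant measure $\widebar P \in \mathcal{P}_2(\mathcal{P}_2(\R^d))$. I would realize the stationary process by taking $m_0$ to be an $\mathcal{F}^0_0$-measurable random variable on $(\Omega_0, \mathcal{F}^0, \mathbb{P}_0)$ with law $\widebar P$, solving the SPDE~\eqref{eq:SFKP:sigma0} from this initial condition, and then lifting to the conditional McKean--Vlasov level by constructing $X_0$ on the product space with $\mathcal{L}_1(X_0) = m_0$ almost surely. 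By Proposition~\ref{prop:existence:!} the corresponding SDE~\eqref{LP} admits a unique solution $X$, for which $\mathcal{L}_1(X_t) = m_t$ almost surely (by the same Yamada--Watanabe style argument invoked right after Proposition~\ref{prop:existence:!}).

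Proposition~\ref{prop:approx} then delivers, $\mathbb{P}_0$-a.s. and for every $t \geq 0$, the pathwise inequality
\[
v(m_t) \;\leq\; v(m_0)\, e^{-2 c_{\alpha_F} t} \;+\; \frac{d\sigma^2}{c_{\alpha_F}}.
\]
The crucial point is that, because of stationarity, $v(m_t)$ and $v(m_0)$ share the same law under $\mathbb{P}_0$, and the pathwise bound then forces $v$ to be essentially bounded by $d\sigma^2/c_{\alpha_F}$. Concretely, for any threshold $a > d\sigma^2/c_{\alpha_F}$ and any $t > 0$, the bound above provides the inclusion
\[
\bigl\{ v(m_t) > a \bigr\} \;\subset\; \Bigl\{ v(m_0) > \bigl(a - d\sigma^2/c_{\alpha_F}\bigr) e^{2 c_{\alpha_F} t} \Bigr\}.
\]
Taking probabilities and exploiting stationarity yields
\[
\mathbb{P}_0\bigl(v(m_0) > a\bigr) \;=\; \mathbb{P}_0\bigl(v(m_t) > a\bigr) \;\leq\; \mathbb{P}_0\Bigl(v(m_0) > \bigl(a - d\sigma^2/c_{\alpha_F}\bigr) e^{2 c_{\alpha_F} t}\Bigr).
\]
Since $\widebar P \in \mathcal{P}_2(\mathcal{P}_2(\R^d))$ ensures in particular that $v(m_0) < \infty$ $\mathbb{P}_0$-almost surely, letting $t \to \infty$ drives the right-hand side to zero, so that $\mathbb{P}_0(v(m_0) > a) = 0$. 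Since $a > d\sigma^2/c_{\alpha_F}$ is arbitrary, this gives the almost sure bound $v \leq d\sigma^2/c_{\alpha_F}$ under $\widebar P$.

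Given this essential boundedness, the conclusion is immediate: the integrand $\exp(2 c_2 v(\widetilde m)^{1/2})$ is dominated $\widebar P$-a.s. by the deterministic constant $\exp\bigl(2 c_2 \sigma \sqrt{d/c_{\alpha_F}}\bigr)$, which is finite, so~\eqref{eq:finite2} follows. I do not anticipate any serious obstacle in this argument; the only mildly technical point is the lift of the stationary SPDE solution to the Lagrangian McKean--Vlasov formulation, which is precisely what is needed to invoke Proposition~\ref{prop:approx}, and which is the same construction already used in Step~2 of the proof of Proposition~\ref{prop:existence}.
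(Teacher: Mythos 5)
Your proof is correct, but it is genuinely different from the one in the paper. You exploit stationarity directly: applying the $\mathbb{P}_0$-a.s. pathwise decay $v(m_t) \le v(m_0)\,e^{-2c_{\alpha_F}t} + d\sigma^2/c_{\alpha_F}$ (from Proposition~\ref{prop:approx}) to a stationary realization, you deduce from the inclusion $\{v(m_t)>a\}\subset\{v(m_0)>(a-d\sigma^2/c_{\alpha_F})e^{2c_{\alpha_F}t}\}$ and the time-invariance of $\mathcal{L}_0(v(m_t))$ that $v \le d\sigma^2/c_{\alpha_F}$ $\widebar P$-almost surely, after which the exponential moment is bounded by a deterministic constant. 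The paper instead constructs $\widebar P$ as a Cesàro limit $\mathcal{Q}_T = T^{-1}\int_0^T P_t\,\mathrm{d}t$ starting from a freely chosen initial law $P_0$ with finite exponential moments of $v^{1/2}$, propagates the moment bound forward in time via the same Proposition~\ref{prop:approx}, and passes to the limit by lower semicontinuity of $P \mapsto \langle \exp(2c_2 v(\cdot)^{1/2});P\rangle$; it also re-derives the second-moment bound $\widebar P\in\mathcal{P}_2(\mathcal{P}_2(\R^d))$ for that particular $\widebar P$ rather than invoking the second part of Proposition~\ref{prop:existence} as you do. Your argument is shorter and yields a strictly stronger conclusion — essential boundedness of $v$ under $\widebar P$, not merely a finite exponential moment — and moreover applies to \emph{any} invariant measure in $\mathcal{P}_2(\mathcal{P}_2(\R^d))$, hence to all of them once Proposition~\ref{prop:existence} is in force. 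The trade-off is that you lean on the second part of Proposition~\ref{prop:existence} (which ensures $v(m_0)<\infty$ a.s.), whereas the paper's construction controls the moments of its candidate $\widebar P$ internally. Both routes are valid; yours is the cleaner one for this lemma, and it also makes the qualitative picture ($v\to 0$ a.s. as $\sigma\to 0$) quantitatively explicit.
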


\begin{remark}
	The result presented in Lemma~\ref{lemma:estimate} 
	{aligns with the global picture we gave for} the behavior of the model. {In particular, it is natural (and in fact well-expected) in the regime $\sigma \ll 1$ (which 
	corresponds to the framework of Proposition~\ref{prop:p1}). Indeed, we already know from \cite{maillet2023note} (see also Section~\ref{sec:motivation}) that, if $\sigma = 0$, the invariant measure $\widebar{P}^0$ is supported by Dirac masses; therefore, the conclusion 
	of Lemma~\ref{lemma:estimate} becomes straightforward when replacing
	 $\widebar P$ by $\widebar P^0$ in the statement. The thrust of Lemma~\ref{lemma:estimate} is thus to extend the result to positive values of $\sigma$. Although our proof relies on a direct computation, it is worth noticing that  any invariant measure $\widebar P$ (for $\sigma >0$) should satisfy
	\begin{equation*}
		\mathrm{d}_1^{\mathcal{P}(\R^d)}(\widebar P, \widebar{P}^0) \leq C \sigma^2,
	\end{equation*}
for some constant $C$ independent of $\sigma$. In particular, any invariant $\widebar P$ is close to $\widebar P^0$ when $\sigma$ is small, hence justifying our intuition that Lemma~\ref{lemma:estimate} is a perturbation of the regime $\sigma=0$.}
\end{remark}

\begin{proof}[Proof of Lemma~\ref{lemma:estimate}]

Consider a probability measure $P_0 \in \mathcal{P}(\mathcal{P}(\R^d))$ 
{with $P_0({\mathcal P}_2({\mathbb R}^d))=1$}
 and the measure valued stochastic process $(m_t)_{t\geq 0}$ 
{solving 
\eqref{eq:SFKP:sigma0} with $\mathcal{L}_0(m_0) = P_0$ as initial condition}. 
Then, letting $P_t = \mathcal{L}_0(m_t)$, $\forall t \geq 0$, we can easily show with arguments similar to those {used} in the proof of Proposition 2 in \cite{maillet2023note} that the sequence $(\mathcal{Q}_T)_{T\geq 0}$ defined by 
\begin{equation*}
	\mathcal{Q}_T := \frac{1}{T} \int_0^T P_t \mathrm{d}t
\end{equation*}
admits a converging subsequence, still denoted by $({\mathcal{Q}}_T)_{T\geq 0}$,  that converges to an invariant measure $\widebar P$.  
\trash{Then, let us consider a probability measure $P_0 \in \mathcal{P}(\mathcal{P}(\R^d))$, such that
\begin{equation}\label{eq:integrability}
	\max\left(\int_{\mathcal{P}(\R^d)} \mu_2(\widetilde m) P_0(\mathrm{d}m) \: ; \: \int_{\mathcal{P}(\R^d)} \exp\left( 2c_2v(\widetilde m)^{1/2}\right)  P_0(\mathrm{d}\widetilde m) \right) < +\infty.
\end{equation}}
For any $T > 0$,
\begin{align*}
	\int_{\mathcal{P}(\R^d)} \exp\left( 2c_2 v(\widetilde m)^{1/2}\right)  \mathcal{Q}_T(\mathrm{d}\widetilde m) &=\frac{1}{T} \int_0^T \int_{\mathcal{P}(\R^d)} \exp\left( 2c_2 v(\widetilde m)^{1/2}\right)  P_t(\mathrm{d}\widetilde m) \\
	&  = \frac{1}{T}\int_0^T \E_0\left[\exp\left( 2c_2 v(m_t)^{1/2}\right)\right] \mathrm{d}t.
\end{align*}
Moreover,  {\eqref{eq:v:mt} gives} 
\begin{align*}
	\int_{\mathcal{P}(\R^d)} \exp\left( 2c_2 v(\widetilde m)^{1/2}\right)  \mathcal{Q}_T(\mathrm{d}\widetilde m) 
	& \leq \frac{1}{T}\int_0^T \E_0\left[\exp\left( 2c_2 v(m_0)^{1/2} + \frac{2c_2 \sigma}{\sqrt{{c_{\alpha_F}}}}\right)\right] \mathrm{d}t 
	\\
	& \leq \E_0\left[\exp\left( 2c_2 v(m_0)^{1/2} + \frac{2c_2 \sigma}{\sqrt{{c_{\alpha_F}}}}\right)\right],
\end{align*}
which can be assumed to be finite by choosing an appropriate initial condition. Recall indeed that the choice of $P_0$ in the construction of the sequence $({\mathcal{Q}}_T)_{T \geq 0}$ is free. 
{Therefore, assuming that 
\begin{equation}\label{3}
	\int_{\mathcal{P}(\R^d)} \exp \bigl( 2 c_2 v (\widetilde m)^{1/2} \bigr) P_0(\mathrm{d}\widetilde m) < +\infty, 
\end{equation}
and}
 using lower-semi-continuity of the function $P \mapsto \langle \exp(2c_2 v(\cdot)^{1/2}{)} \: ; \: P\rangle_{\mathcal{P}(\R^d)}$, we get that 
\begin{equation}\label{eq:}
	\int_{\mathcal{P}(\R^d)} \exp\left( 2c_2 v(\widetilde m)^{1/2} \right) \widebar P(\mathrm{d}\widetilde m) \leq \lim\inf_{T \to +\infty} \int_{\mathcal{P}(\R^d)} \exp\left( 2c_2 v(\widetilde m)^{1/2} \right) \mathcal{Q}_T(\mathrm{d}\widetilde m) < +\infty,
\end{equation}
which is \eqref{eq:finite2}. 
\vskip 6pt
\noindent In order to prove that $\widebar P \in \mathcal{P}_2(\mathcal{P}_2(\R^d))$, we use the same approach and 
{get} for any $T > 0$, 
\begin{equation}
\label{eq:cesaro:T:mu_2}
\begin{split}
	\int_{\mathcal{P}(\R^d)} \mu_2(\widetilde m) \mathcal{Q}_T(\mathrm{d}\widetilde m) & = \frac{1}{T} \int_0^T\int_{\mathcal{P}(\R^d)} \mu_2(\widetilde m) P_t(\mathrm{d}\widetilde m) \mathrm{d}t \\
	& = \frac{1}{T} \int_0^T \E_0[\mu_2(m_t)]\mathrm{d}t.
\end{split}
\end{equation} 
Now, we claim that, {under 
\eqref{3} 
and}
Assumptions~\ref{A1} and~\ref{A2},
\begin{equation*}
	\sup_{t\geq 0} \E_0[\:\mu_2(m_t)] < +\infty. 
\end{equation*}
Indeed, there {exist} two positive constants $b_1, b_2 >0$, such that for all $(x,y) \in \R^d$,
\begin{equation*}
	(x-y)\cdot \left(G(x) - G(y)\right) \leq -b_1|x-y|^2 + b_2.
\end{equation*}
Then, we get 
\begin{align*}
	\frac{\mathrm{d}}{\mathrm{d}t}\E_0[\mu_2(m_t)] \leq & -2b_1\E_0[\mu_2(m_t) ]+ 2b_2 + (\sigma^2 + \sigma_0^2) \\
	& + 2|G(0)|\E_0[\mu_2(m_t)]^{1/2} + 2C_F\E_0\left[ \mu_2(m_t)^{1/2}v(m_t)^{1/2}\right].
\end{align*} 
Using once again {\eqref{eq:v:mt} and Jensen's inequality} to control the last term, we obtain
\begin{align*}
	\frac{\mathrm{d}}{\mathrm{d}t}\E_0[\mu_2(m_t)]&\leq -2b_1\E_0[\mu_2(m_t) ]+ 2\left( |G(0)| + C_F {v(m_0)^{1/2}} + \frac{C_F {\sigma}}{{\sqrt{c_{\alpha_F}}}}\right) \E_0\left[ \mu_2(m_t)\right]^{1/2} 
	\\
	&\quad + 2b_2 + d(\sigma^2 + \sigma_0^2)
	\\
	&{\leq - b_1\E_0[\mu_2(m_t) ]+ b_1^{-1} \left( |G(0)| + C_F {v(m_0)^{1/2}} + \frac{C_F {\sigma}}{{\sqrt{c_{\alpha_F}}}}\right)^2    + 2b_2 + d(\sigma^2 + \sigma_0^2)},
\end{align*}
and conclude using Grönwall's Lemma that $\sup_{t\geq 0} \E_0[\:\mu_2(m_t)] < +\infty.$
Finally, using the same lower-semi-continuity argument {as in \eqref{eq:} and returning to \eqref{eq:cesaro:T:mu_2}}, we get
\begin{equation}\label{eq:control:2:}
	\int_{\mathcal{P}(\R^d)} \mu_2(\widetilde m) \widebar{P}(\mathrm{d}\widetilde m) \leq \lim_{T \to +\infty} \int_{\mathcal{P}(\R^d)} \mu_2(\widetilde m) \mathcal{Q}_T(\mathrm{d}\widetilde m) < +\infty. 
\end{equation}
The latter  implies that $\widebar P \in \mathcal{P}_2(\mathcal{P}_2(\R^d))$. 
\end{proof}

\subsection{{Completion of the proof of Theorem~\ref{th:main}}}
We are now ready for the proof of Theorem~\ref{th:main} which is a straightforward combination of Proposition~\ref{prop:p1} and Lemma~\ref{lemma:estimate}.

\begin{proof}[Proof of Theorem~\ref{th:main}]
	We know from Proposition~\ref{prop:p1} that for any $\alpha_F \geq \max(L_G, m_G)$ and any $\sigma_0 > 0$, there {exist} $\widebar \sigma > 0$ (depending on $F, G, \sigma_0$ and $d$) such that for all $\sigma \leq \widebar \sigma$ and for 
	 any $1$-Lipschitz continuous  function $\phi : \mathcal{P}_1(\R^d) \rightarrow {\mathbb R}$
	and all 
	$m,\widetilde m \in \mathcal{P}_2(\R^d)$, 
	\begin{equation*}
	\left|\mathscr{P}_t\phi(m) - \mathscr{P}_t\phi(\widetilde m)\right| \leq \mathcal{K}(m, \widetilde m) e^{- \mathfrak{c}t},
	\quad {t \geq 0,}
\end{equation*}
for some $ \mathfrak{c} > 0$ independent of $m$ and $\widetilde m$.  Let us now consider {an invariant measure} $\widebar P \in \mathcal{P}_2(\mathcal{P}_2(\R^d))$, {\color{black} with a finite conditional exponential moments}, {whose} existence is ensured by Lemma~\ref{lemma:estimate}. 
{Then,
\begin{equation}
\begin{aligned}
\label{eq:proof:thm1:1}
	\left|\mathscr{P}_t\phi(m) - \int_{\mathcal{P}(\R^d)}\mathscr{P}_t\phi(\widetilde m) \widebar P(\mathrm{d}\widetilde m)\right| &\leq \int_{\mathcal{P}(\R^d)}|\mathscr{P}_t\phi(m) - \mathscr{P}_t\phi(\widetilde m) |\widebar P(\mathrm{d}\widetilde m) 
	\\
	&\leq e^{-  \mathfrak{c}t}\int_{\mathcal{P}(\R^d)}\mathcal{K}(m, \widetilde m) \widebar P(\mathrm{d}\widetilde m).  
\end{aligned}
\end{equation}

Now, coming back to 
Proposition~\ref{prop:p1}
for the shape of $\mathcal{K}(m, \widetilde m)$, we obtain that
\begin{equation*}
\begin{split}
\int_{\mathcal{P}(\R^d)}\mathcal{K}(m, \widetilde m) \widebar P(\mathrm{d}\widetilde m)
&\leq C_d \mu_2(m)^{1/2} 
\exp\left(c_2 v(m)^{1/2} \right)
\int_{\mathcal{P}(\R^d)}
\exp\left(c_2 v(m)^{1/2} \right)
\widebar P(\mathrm{d}\widetilde m)
\\
&\quad +  C_d \exp\left(c_2 v(\widetilde m)^{1/2} \right)
\int_{\mathcal{P}(\R^d)}
\mu_2(\widetilde m)^{1/2} 
\exp\left(c_2 v(\widetilde m)^{1/2} \right)
\widebar P(\mathrm{d}\widetilde m),
\end{split}
\end{equation*}
with $c_2 := {c_1^{\alpha_F}}/{c_{\alpha_F}}$.}

{Using Hölder inequality and leveraging on Lemma~\ref{lemma:estimate}
we have
\begin{equation*}
\begin{split}
\int_{\mathcal{P}(\R^d)}\mathcal{K}(m, \widetilde m) \widebar P(\mathrm{d}\widetilde m)
&\leq C_d \bigl( 1+ \mu_2(m)^{1/2} \bigr) \exp\left(c_2 v( m)^{1/2} \right). 
\end{split}
\end{equation*}
Substituting in 
\eqref{eq:proof:thm1:1}, we complete the proof.}
\end{proof}

\section{Proof of Proposition~\ref{prop:p1}}\label{proofP1}
\trash{
{\color{blue}
The proof is based on the observation that the interaction force is such that the dynamics of the stochastic process \((m_t)_{t\geq 0}\) is mainly directed by its first moment. We will demonstrate that approximating the process by its conditional expectation given the common noise creates an error that we are able to quantify thanks to Proposition~\ref{prop:approx}. Following this, we will study the long-time behaviour of the conditional expectation, which is a stochastic process in \(\mathbb{R}^d\), in order to prove Proposition~\ref{prop:p1}. Indeed, we will prove some contractions properties similar to the one known for diffusion processes with a drift satisfying Assumption~\ref{A1}. As previously mentioned the main difficulty in this proof is to properly handle the residual term coming from the presence of idiosyncratic noise ($\sigma > 0$). The reason behind this approach is that substituting \(X\) with its first moment given the common noise, namely \(\E_1[X]\), significantly simplifies the analysis of the long-term behaviour. This simplification is particularly useful as it allows us to bypass the complexities introduced by the interaction part and the idiosyncratic noise. Indeed, using Proposition~\ref{prop:approx}, we immediately get that the impact of $\E_1[F(X_t -\E_1[X_t])]$ is \textit{small} in some sense on a large time scale. Moreover, thanks to the confining property of the force $G$, we are able to recover uniqueness of the invariant measure.  
}}

%
This section is dedicated to the proof of Proposition \ref{prop:p1}. 

\subsection{Ansatz}
{The strategy of proof of Proposition~\ref{prop:p1} relies on 
the auxiliary Proposition~\ref{prop:approx}.
This latter result says that the residual conditional variance of 
$X$ (solution
to Equation \eqref{LP}) is small with $\sigma$. In turn, it prompts us to focus on the process $\E_1[X]=(\E_1[X_t])_{t \geq 0}$}. The latter has the following {dynamics}:
\begin{equation*}
	\mathrm{d}\E_1[X_t] = \E_1[G(X_t)] \mathrm{d}t + \E_1[F(X_t - \E_1[X_t])]\mathrm{d}t + \sigma_0\mathrm{d}B^0_t, 
	\quad {t \geq 0}. 
\end{equation*}
{Whilst this is not a closed equation (in $\E_1[X_t]$), our approach is to write it as a perturbation of 
a finite-dimensional stochastic differential equation.}
Heuristically, we have (thanks to Proposition~\ref{prop:approx}) that on a large time scale
\begin{align}
	& \E_1[F(X_t - \E_1[X_t])] \approx \sigma^2/c_{\alpha_F},\label{eq:approx:1}
	\\
	& \E_1[G(X_t)] = G(\E_1[X_t]) + \left[ \E_1[G(X_t)] - G(\E_1[X_t]) \right] \approx G(\E_1[X_t]) + \sigma^2/c_{\alpha_F}\label{eq:approx:2}.
\end{align}
Equations \eqref{eq:approx:1} and \eqref{eq:approx:2} are {rather informal, and we refrain from giving here a rigorous meaning to the symbol $\approx$ in both of them}. However, 
{these two expansions suggest} that the {dynamics} of $(\E_1[X_t])_{t\geq 0}$ {indeed} resembles to the one of the diffusion process 
$Y${, constructed on $(\Omega_0, \mathbb{F}^0, \mathbb{P}_0)$ as the solution of} 
\begin{equation}\label{eq:diffusion}
	\mathrm{d} Y_t = G(Y_t)\mathrm{~d}t + \sigma_0\mathrm{d}B^0_t, \quad {t \geq 0 \ ; \quad Y_0 = {\mathbb E}_1(X_0).} 
\end{equation}
{Here, it is worth emphasizing that, even obvious, 
\eqref{eq:diffusion} is not a McKean-Vlasov but a mere diffusion equation. 
In particular, the long time analysis 
of
\eqref{eq:diffusion} falls within a much wider literature, since the study of the long time behaviour of diffusion processes has been an important topic of interest}, see \cite{ane2000inegalite, bakry2008simple, bakry1985diffusions} to name just a few. 
{Below, we make use of coupling arguments, in the spirit of Eberle \cite{eberle2016reflection}, 
who stated contraction properties for the law of the solution of \eqref{eq:diffusion} by 
 building on ideas developed earlier in \cite{lindvall1986coupling}.} The next section gives some details on this {approach} and the contraction results {available in this} framework. 

\subsection{Contraction for classical diffusion processes}\label{subsec:contraction} 
 Following \cite{eberle2016reflection}, we introduce the following quantities :
\begin{align*}
R_0 & := \inf \{R \geq 0: \kappa(r) \geq 0, \quad \forall r \geq R\}, \\
R_1 & := \inf \left\{R \geq R_0: \kappa(r) R\left(R-R_0\right) \geq 8, \quad  \forall r \geq R\right\},
\end{align*}
so that $R_0 \leq R_1$. Thanks to  Assumption~\ref{A1}, $R_0$ and $R_1$ are finite. {\color{black}Moreover}, let
\begin{align*}
& \varphi(r):=\exp \left(-\frac{1}{4} \int_0^r s \kappa(s)^{-} \mathrm{d} s\right), \quad \Phi(r){:=}\int_0^r \varphi(s) \mathrm{d} s, \\
& g(r):=1-  {\left(\int_0^{r \wedge R_1} \Phi(s)\varphi(s)^{-1}   \mathrm{d} s \right)}\left(2\int_0^{R_1} \Phi(s)\varphi(s)^{-1} \mathrm{d} s\right)^{-1} ,
\end{align*}
and 
\begin{equation}\label{eq:f}
f(r):=\int_0^r \varphi(s) g(s) \mathrm{d}s.
\end{equation}
Then, we have that
\begin{itemize}
\item $\varphi$ is non-increasing, $\varphi(0)=1$, and $\varphi(r)=\varphi\left(R_0\right)$ for any $r \geq R_0$;
~\\
\item $g$ is non-increasing, $g(0)=1$, and $g(r)=1/2$ for any $r \geq R_1$;
~\\
\item $f^\prime$ is bounded on $[0, +\infty)$;
~\\
\item $f$ is concave, $f(0)=0, f^{\prime}(0)=1$, and
\begin{equation*}
\Phi(r) / 2 \leq f(r) \leq \Phi(r) \quad \text { for any } r \geq 0 .
\end{equation*}
\end{itemize}
The last display combined with the fact that for all $r \geq 0$, $\Phi(r) / 2 \geq \varphi(R_0)r/2$ and $\Phi(r) \leq r$, implies  \begin{equation}\label{eq:equivalence}
	\kappa_1 r \leq f(r) \leq r,
\end{equation}
with $\kappa_1 := \varphi(R_0)/2$. {This allows us to} define the   {equivalent} deformation of $1$-Wasserstein distance
\begin{equation*}
\mathrm{d}^{\R^d}_f(\mu, \nu) {:=}\inf_{\pi \in \Pi(\mu, \nu)}\int f(|x-y|) \pi(\mathrm{~d}x, \mathrm{~d}y){,}
\end{equation*}
for {any two} probability measures $\mu, \nu$ on $\mathbb{R}^d$, where the infimum is taken over all couplings $\pi$ of $\mu$ and $\nu$. {Last but not least}, the function $f$ we constructed satisfies 
\begin{equation}\label{eq:ineq}
	f^{\prime \prime}(r)-\frac{1}{4} r \kappa(r) f^{\prime}(r) \leq-\frac{c}{2\sigma_0^2} f(r) \quad \text { for  all } r>0,
\end{equation}
and for some $c > 0$. Such concentration property for the function $f$ is classical, and the proof of inequality \eqref{eq:ineq} can be found in \cite{conforti2023coupling,  durmus2020elementary, eberle2016reflection}. Leveraging on the contraction property of $f$ given by Equation \eqref{eq:ineq}, we get the following result
{(as a direct consequence of 
\cite{eberle2016reflection})}:
\begin{theorem}[Corollary 2 in \cite{eberle2016reflection}]\label{th:eberle}
	Under Assumption~\ref{A1} {and with} $\sigma_0 > 0$, there exists a constant $c := c_{\sigma_0} > 0$, such that for any pair $(Y, \widetilde Y)$ {of solutions to} Equation \eqref{eq:diffusion} with different initial conditions, we have
	\begin{equation}\label{eq:contra:f}
		\mathrm{d}^{\R^d}_f\left(\mathcal{L}_1(Y_t), \mathcal{L}_1(\widetilde Y_t)\right) \leq \mathrm{d}^{\R^d}_f\left(\mathcal{L}_1(Y_0), \mathcal{L}_1(\widetilde Y_0)\right)e^{-ct}, 
		\quad {t \geq 0}.  
	\end{equation}
\end{theorem}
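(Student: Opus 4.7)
The plan is to implement the reflection coupling à la Lindvall--Rogers, as revisited in \cite{eberle2016reflection}. Given two solutions $Y$ and $\widetilde Y$ of \eqref{eq:diffusion} with (possibly different) initial laws, I would first construct, on the same probability space $(\Omega_0,\mathcal{F}^0,\mathbb{F}^0,\mathbb{P}_0)$, a coupling $(Y_t,\widetilde Y_t)_{t \geq 0}$ driven by a single Brownian motion. Setting $Z_t := Y_t - \widetilde Y_t$ and $e_t := Z_t/|Z_t|$ (with an arbitrary convention when $Z_t=0$), the reflection coupling reads
\begin{equation*}
\mathrm{d}Y_t = G(Y_t)\mathrm{d}t + \sigma_0 \mathrm{d}B^0_t,
\qquad
\mathrm{d}\widetilde Y_t = G(\widetilde Y_t)\mathrm{d}t + \sigma_0\bigl(I_d - 2 e_t e_t^{\top}\bigr) \mathrm{d}B^0_t,
\end{equation*}
so that the Brownian increment of $\widetilde Y$ is a reflection of the one of $Y$ across the hyperplane orthogonal to $e_t$. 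Strong existence and uniqueness of this coupled system up to the coupling time $T:=\inf\{t\geq 0 : Z_t = 0\}$ follow from the Lipschitz-continuity of $G$; beyond $T$, I would set $\widetilde Y_t = Y_t$ (synchronous coupling).

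Next I would derive the scalar SDE satisfied by $r_t := |Z_t|$. A direct Itô calculation, using the fact that $(I_d - 2 e_t e_t^{\top})$ is an involution and that $e_t^{\top} \mathrm{d}B^0_t$ is a one-dimensional Brownian increment, yields, prior to the coupling time,
\begin{equation*}
\mathrm{d}r_t = \frac{Z_t \cdot \bigl(G(Y_t) - G(\widetilde Y_t)\bigr)}{|Z_t|} \, \mathrm{d}t + 2\sigma_0 \mathrm{d}w_t,
\end{equation*}
where $(w_t)_{t\geq 0}$ is a one-dimensional $\mathbb{F}^0$-Brownian motion. Assumption (A1.1) then produces the pointwise estimate
\begin{equation*}
\frac{Z_t \cdot \bigl(G(Y_t) - G(\widetilde Y_t)\bigr)}{|Z_t|} \leq -\frac{\sigma_0^2}{2} r_t \kappa(r_t),
\end{equation*}
which is the standard semi-convexity bound controlling the drift of $(r_t)_{t \geq 0}$.

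The core step is then to apply Itô's formula to $f(r_t)$, where $f$ is the concave function constructed in \eqref{eq:f}. Since $f'$ is non-negative and bounded, this gives
\begin{equation*}
\mathrm{d} f(r_t) \leq 2\sigma_0^2 \left[ f''(r_t) - \tfrac{1}{4} r_t \kappa(r_t) f'(r_t)\right] \mathrm{d}t + 2\sigma_0 f'(r_t) \mathrm{d}w_t,
\end{equation*}
and the cornerstone inequality \eqref{eq:ineq} collapses the bracket into $-c \, f(r_t)/\sigma_0^2$ (up to the factor $2\sigma_0^2$), leading to
\begin{equation*}
\mathrm{d} f(r_t) \leq -c \, f(r_t)\mathrm{d}t + 2\sigma_0 f'(r_t) \mathrm{d}w_t.
\end{equation*}
Taking expectations (the stochastic integral is a true martingale because $f'$ is bounded) and applying Grönwall's lemma yields $\mathbb{E}_1[f(r_t)] \leq e^{-ct}\, \mathbb{E}_1[f(r_0)]$. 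Optimizing over the initial coupling $(Y_0,\widetilde Y_0)$ (i.e.\ choosing an optimal coupling for $\mathrm{d}_f^{\R^d}(\mathcal{L}_1(Y_0),\mathcal{L}_1(\widetilde Y_0))$) and using that the coupled pair $(Y_t,\widetilde Y_t)$ gives an admissible coupling of $(\mathcal{L}_1(Y_t),\mathcal{L}_1(\widetilde Y_t))$ delivers \eqref{eq:contra:f}.

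The main technical obstacle is the non-smoothness of the coefficients of the reflection coupling at $Z_t=0$, which requires either a localization argument away from the diagonal (working on $\{r_t \geq \varepsilon\}$ and sending $\varepsilon\to 0$, using that the coupling time is then absorbed in the synchronous regime where $r_t$ stays at $0$) or the use of an approximation of the reflection matrix by smooth ones; both approaches are now standard and detailed in \cite{eberle2016reflection}, so I would merely invoke them rather than reprove them here.
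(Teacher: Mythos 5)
Your proof sketch is essentially the standard reflection-coupling argument of Eberle, which is precisely the argument the paper invokes by citing \cite{eberle2016reflection} without reproducing it. The Itô computation for $r_t=|Y_t-\widetilde Y_t|$ (with the $1/r_t$ terms cancelling to leave noise intensity $2\sigma_0$), the drift bound from (A1.1), the application of the concave function $f$ built in \eqref{eq:f} together with the differential inequality \eqref{eq:ineq} to get $\mathrm{d}f(r_t)\leq -c f(r_t)\mathrm{d}t + 2\sigma_0 f'(r_t)\mathrm{d}w_t$, and the final Grönwall plus optimization over initial couplings are all correct and match the source. Your acknowledgement of the regularity issue at $Z_t=0$ (handled by localization or smoothing of the reflection matrix, then absorbing into a synchronous coupling past the coupling time) is also the standard resolution. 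In short: your sketch is correct and is the same proof as the one in the cited reference; the paper itself does not re-prove the result.
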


\begin{remark}
	{Thanks} to  \eqref{eq:equivalence}, we get that inequality \eqref{eq:contra:f} holds (up to some constant) when replacing $\mathrm{d}^{\R^d}_f$ by the classical $\mathrm{d}^{\R^d}_1$ Wasserstein distance. 
\end{remark}

The idea of the proof of Theorem~\ref{th:eberle} is to use the reflection coupling introduced in \cite{lindvall1986coupling}
and widely used in the literature \cite{conforti2023coupling, durmus2020elementary, eberle2016reflection,eberle2019sticky}. The classical reflection coupling
{is constructed by multiplying the noise by a symmetry matrix mapping instantaneously $Y_t - \widetilde Y_t$ on 
its opposite}, see \cite[Section 3]{lindvall1986coupling} for details. {In this framework, the distance between 
the two processes $Y$ and $\widetilde Y$ evolves according to an It\^o process with constant noise intensity. The hope is thus to benefit 
from the presence of the noise to obtain further recurrent properties that prevent the processes from staying away from one another. For sure, one cannot expect 
this intuition to hold true for any type of drift; in order to guarantee a relevant form of recurrence, some further confining properties are necessary.} 

{When dealing with the stochastic differential equation 
\eqref{eq:diffusion}, 
the condition
		$\lim\sup_{r \to +\infty}\kappa(r) > 0 $
		required in Assumption 		
		\ref{A1} is key in the verification of the latter confining properties. In itself, this is not an obvious result: one must indeed keep in mind that $G$ is not strictly decreasing, meaning that $\kappa$ may be negative on some part of the space. Clearly, things become even more subtle 
		in presence of the conditional McKean-Vlasov interaction. 
		Still, one would like to settle down a similar reflection argument for Equation \eqref{LP}, which raises preliminary questions of measurability. 
		Briefly, in order to preserve the 
	 form of	the common noise, one must consider a reflection matrix that is measurable with respect to the latter
	 (and not to the idiosyncratic noise).} The idea developed in the next subsection is to introduce a coupling $(X, \widetilde X)$
	 {of 
	 solutions to 
	 \eqref{LP} in which the reflection is achieved  by reflecting with respect to $\E_1[X_t] - \E_1[\widetilde X_t]$. However, while reflecting with respect to the conditional 
	 expectations may seem appropriate to our objective, it cannot 
	 suffice on its own to get a proof of Proposition~\ref{prop:p1} that would be a mere copy and paste of the argument of 
	  \cite{eberle2016reflection}. The difficulty that one meets when adapting  \cite{eberle2016reflection} to \eqref{LP} is clear: 
	  the processes that solve \eqref{LP} are not 
	those that are used in the reflection. 
	This is where Proposition~\ref{prop:approx} comes in and this is one of the main innovation of our work: 
	we use the fact that the distance between the solutions to  \eqref{LP} and their conditional expectations (which are also the processes 
	entering the reflection) is small with $\sigma$. 
	Mathematically, the difficulty is to revisit the whole machinery of \cite{eberle2016reflection}  by using the fact that, although it is not zero, 
	the latter distance is 
	small. }	
	%

\subsection{Proof of Proposition~\ref{prop:p1}}
To {ease} the reading of this subsection, {we briefly outline the strategy of proof}:\\

\noindent \textit{Proof Outline.}
\begin{itemize}
	\item \textit{Step 1}. We construct a coupling $(X,\widetilde X)$ inspired by the reflection coupling introduced in \cite{lindvall1986coupling} (as explained in subsection~\ref{subsec:contraction}) and adapted to the presence of {a} common noise. 
	\item \textit{Steps 2, 3 \& 4}. We prove that {this} coupling allows the conditional expectations $\E_1[X]$ and $\E_1[\widetilde X]$ 
	to get closer in time, up to some residual {error} that depends on {the distance} between the centred processes $X - \E_1[X]$ and $\widetilde X- \E_1[\widetilde X]$. 
	{This is established thanks to the confining property of} $G$ guaranteed by Assumption~\ref{A1}.	
	\item \textit{Step 5}. In parallel (the key point {is} to run the two arguments at the same time), we 
	{prove the mirror result, namely we show that the centred processes $X - \E_1[X]$ and $\widetilde X- \E_1[\widetilde X]$
	get closer in time, up to some residual {error} that depends on {the distance} between the
	conditional expectations $\E_1[X]$ and $\E_1[\widetilde X]$.
	This is established thanks to the strictly contracting properties of the interaction term} $F$ guaranteed by Assumption~\ref{A2}.
	\item \textit{Step 6}. We combine together the results of Step 4 and Step 5 to conclude. 
\end{itemize} 

\begin{proof}
	\textit{Step 1}. Let us consider a bounded and $\mathrm{d}_1^{\R^d}$-Lipschitz continuous function $\phi : \mathcal{P}_1(\R^d) \to \R$. In particular there exists $\| \phi\|_{\mathrm{lip}} > 0$ such that for any $m,\widetilde m \in \mathcal{P}_1(\R^d)$, 
	\begin{equation*}
		|\phi(m) - \phi(\widetilde m)| \leq \| \phi\|_{\mathrm{lip}} \mathrm{d}_1^{\R^d}(m, \widetilde m). 
	\end{equation*}
	Let us then fix $m, \widetilde m \in \mathcal{P}_2(\R^d)$ together with two independent random variables $X_0, \widetilde X_0$, defined on 
	{$(\Omega_1, \mathbb{F}^1, \mathbb{P}_1)$} such that $\mathcal{L}_1(X_0) = m$ and  $\mathcal{L}_1(\widetilde X_0) = \widetilde m$. Then, we {extend $X_0$ and $\widetilde X_0$ to the product space $(\Omega, \mathbb{F}, \mathbb{P})$ in a natural manner}.  
	
\begin{remark}\label{rk:IC}
	It is important to notice here that, even if the initial conditions $X_0$, $\widetilde{X}_0$ are defined on the product space $(\Omega, \mathbb{F}, \mathbb{P})$, they are independent of {${\mathcal F}^0$}. 
\end{remark}
	
For a given $\delta > 0$, {let} us now introduce the following {coupling} of two solutions of the SDE \eqref{LP} with initial conditions $X_0$ and $\widetilde X_0$:
\begin{align}\label{eq:coupling}
    \left\{
    \begin{array}{ll}
        \begin{aligned}
            \mathrm{d}X^\delta_t = G(X^\delta_t)\mathrm{d}t + &F\left(X^\delta_t- \E_1[X^\delta_t]\right)\mathrm{d}t  + \sigma \mathrm{d}B_t 
            + \sigma_0\left\{ \pi_{\delta}(E^\delta_t)\mathrm{d}B^0_t + \lambda_\delta(E^\delta_t)\mathrm{d}\widetilde{B}^0_t\right\},
        \end{aligned}
        \\
        \\
        \begin{aligned}
            \mathrm{d}\widetilde{X}^\delta_t = G(\widetilde{X}^\delta_t)\mathrm{d}t + & F\left(\widetilde{X}^\delta_t- \E_1[\widetilde{X}^\delta_t]\right)\mathrm{d}t  + \sigma \mathrm{d}B_t \\
	            &+ \sigma_0\left\{ \pi_{\delta}(E^\delta_t)\left( \mathrm{Id} - 2 e^\delta_t\left(e^\delta_t\right)^\top \right)\mathrm{d}B^0_t + \lambda_\delta(E^\delta_t)\mathrm{d}\widetilde{B}^0_t\right\},
        \end{aligned}
    \end{array}
    \right.
\end{align}
where 
\begin{itemize}
	\item {the} initial conditions are given by $X^\delta_0 = X_0$ and $\widetilde{X}^\delta_0 = \widetilde{X}_0$;
	\item $\widetilde B^0$ is a $d$-dimensional Brownian motion defined on {$(\Omega_0, \mathcal{F}^0, \mathbb{P}_0)$} adapted to the filtration 
	{$\mathbb{F}^0$} and independent of $B^0$;
	\item {for} all $t \geq 0$, $E^\delta_t := \E_1[X^\delta_t] - \E_1[\widetilde{X}^\delta_t]$, and 
	\begin{equation*}
		e^\delta_t := \begin{cases}
			E^\delta_t/|E^\delta_t| \quad \text{if} \quad |E^\delta_t| \neq 0\\
			0 \quad \quad \: \text{otherwise} ;
		\end{cases}
	\end{equation*}
	\item {the} function $\pi_\delta$ is Lipschitz continuous on $\R^d$ with value in $[0,1]$, such that for any $x \in \R^d$,
	\begin{equation*}
		\pi_\delta(x) = \begin{cases}
			0 \quad \text{if} \quad |x| \leq \delta/2\\
			1 \quad \text{if} \quad |x| \geq \delta.
		\end{cases}
	\end{equation*}
	\item {the} function $\lambda_\delta$ is Lipschitz-continuous and satisfies $\lambda_\delta^2(x) = 1 - \pi_\delta^2(x)$, for all $x \in \R^d$. 
\end{itemize}
The first thing to notice is that under the Lipschitz continuity assumptions on $G$ and $F$, and the integrability properties of the initial conditions, we have existence and uniqueness of {a} weak solution of \eqref{eq:coupling}. 
Moreover, thanks to the choice of $\pi_\delta$ and $\lambda_\delta$, the pair $(X^\delta, \widetilde X^\delta)$ is a coupling of $(Z, \widetilde Z)$ where for $Z$ and $\widetilde Z$ admits the same dynamic,
\begin{equation}
	\mathrm{d}Z_t = G(Z_t) \mathrm{d}t + F(Z_t - \E_1[Z_t])\mathrm{d}t + \sigma \mathrm{d}B_t + \sigma_0\mathrm{d}B^0_t,
\end{equation}
with initial condition $Z_0 = X_0$ and $\widetilde Z_0 = \widetilde X_0$. 
This is mainly due to Levy's characterisation of the Brownian motion which is applicable here as for all $x \in \R^d$, $\lambda^2_\delta(x) + \pi_\delta^2(x) = 1$. Details on the coupling method are given in subsection~\ref{subsec:contraction}.
Below, we focus on the long-time behavior of 
\begin{equation*}
\E_0\big[ \E_1\big[|X^\delta_t - \widetilde X^\delta_t|\big] \big], 
\end{equation*}
for a given $\delta > 0$. 
~\\

\textit{Step 2.}  Letting $A^\delta_t := X^\delta_t - \E_1[X^\delta_t]$ and $\widetilde A^\delta_t := \widetilde X^\delta_t - \E_1[\widetilde X^\delta_t]$, we get that for all $t\geq 0$, 
\[|X^\delta_t - \widetilde X^\delta_t| \leq |\E_1[X^\delta_t] - \E_1[\widetilde X^\delta_t]| + |A^\delta_t - \widetilde A^\delta_t|.\] The idea of the proof is to control both $|\E_1[X^\delta_t] - \E_1[\widetilde X^\delta_t]|$ and $|A^\delta_t - \widetilde A^\delta_t|$ simultaneously. This will allow us to prove that both terms {converge} in fact to 0 (in some sense). Let us begin with the difference of the conditional expectations:
\begin{align*}
	\mathrm{d}\left(\E_1[X^\delta_t] - \E_1[\widetilde X^\delta_t]\right) = &  \E_1[G(X^\delta_t) - G(\widetilde X^\delta_t)]\mathrm{d}t + \E_1[F(A^\delta_t) - F(\widetilde A^\delta_t)] \mathrm{d}t \\
	 + &  2\sigma_0\pi_\delta(E^\delta_t)  e^\delta_te^\delta_t\cdot\mathrm{d}B^0_t.
\end{align*}
Using Itô's formula and recalling that $E^\delta_t = \E_1[X^\delta_t] - \E_1[\widetilde X^\delta_t]$ and $e^\delta_t\cdot E^\delta_t = |E^\delta_t|$, we obtain
\begin{align*}
	\mathrm{d}\left|\E_1[X^\delta_t] - \E_1[\widetilde X^\delta_t]\right|^2   =  & \:  2\left(\E_1[X^\delta_t] - \E_1[\widetilde X^\delta_t]\right)\cdot\left( \E_1[G(X^\delta_t)] - \E_1[G(\widetilde X^\delta_t)]\right)\mathrm{d}t \\
	+ & \: 2\left(\E_1[X^\delta_t] - \E_1[\widetilde X^\delta_t]\right)\cdot\left( \E_1[F(A^\delta_t)] - \E_1[F(\widetilde A^\delta_t)]\right)\mathrm{d}t \\
	+& \: 4\sigma_0\left|\E_1[X^\delta_t] - \E_1[\widetilde X^\delta_t]\right|\pi_\delta(E^\delta_t)e^\delta_t\cdot\mathrm{d}B^0_t \\
	 + & \: 2\sigma_0^2\pi^2_\delta(E^\delta_t)\mathrm{d}t.
\end{align*}
This combined with the particular shape of the reflection, and noting that $(\int_0^te^\delta_s\cdot\mathrm{d}B^0_s)_t$ is a 1-dimensional Brownian motion tank's to Levy's characterisation allows us to write
\begin{equation}\label{eq:abs}
\begin{aligned}
	\mathrm{d}\left|\E_1[X^\delta_t] - \E_1[\widetilde X^\delta_t]\right| = & \: e^\delta_t \cdot\left( \E_1[G(X^\delta_t)] - \E_1[G(\widetilde X^\delta_t)]\right)\mathrm{d}t \\
	 + & \: e^\delta_t \cdot\left( \E_1[F(A^\delta_t)] - \E_1[F(\widetilde A^\delta_t)]\right)\mathrm{d}t \\
	 + &\: 2\sigma_0\pi_\delta(E^\delta_t)e^\delta_t\cdot \mathrm{d}B^0_t, \quad {t \geq 0}. 
\end{aligned}
\end{equation}
Deriving Equation \eqref{eq:abs} is not straightforward and can be done using similar techniques as in \cite{durmus2020elementary}.  The underlying concept is that the presence of $\pi_\delta$ keeps the process $E^\delta$ from lingering near zero. {The complete proof of \eqref{eq:abs}} can be found in Appendix~\ref{Appendix}.
We now recall (see Assumption~\ref{A1}) {that} $\kappa$ is a continuous function on $(0, \infty)$ satisfying
\begin{equation}\label{eq:kappa}
\liminf _{r \rightarrow \infty} \kappa(r)>0 \quad \text { and } \quad \int_0^1 r \kappa(r)^{-} \mathrm{d} r<\infty .
\end{equation}
Then, using Itô's formula once again, we obtain
\begin{align*}
	\mathrm{d}f\left(\left|\E_1[X^\delta_t] - \E_1[\widetilde X^\delta_t]\right|\right) = & \: f^\prime\left(\left|\E_1[X^\delta_t] - \E_1[\widetilde X^\delta_t]\right|\right)e^\delta_t \cdot\left( \E_1[G(X^\delta_t)] - \E_1[G(\widetilde X^\delta_t)]\right)\mathrm{d}t \\
	+ & \: f^\prime\left(\left|\E_1[X^\delta_t] - \E_1[\widetilde X^\delta_t]\right|\right)e^\delta_t \cdot\left( \E_1[F(A^\delta_t)] - \E_1[F(\widetilde A^\delta_t)]\right)\mathrm{d}t \\
	+ & \: 2\sigma_0f^\prime\left(\left|\E_1[X^\delta_t] - \E_1[\widetilde X^\delta_t]\right|\right)\pi_\delta(E^\delta_t)e^\delta_t\cdot \mathrm{d}B^0_t \\
	+ & \: 2\sigma_0^2f^{\prime\prime}\left(\left|\E_1[X^\delta_t] - \E_1[\widetilde X^\delta_t]\right|\right)\pi^2_\delta(E^\delta_t) \mathrm{d}t. 
\end{align*}
\trash{
\begin{remark}
	Here, it is important to notice that there is no loss of generality to work with the function $f$ as Equation \eqref{eq:equivalence} implies that the distance $(x,y) \mapsto f(|x-y|)$ is equivalent to the distance generated by the Euclidian norm on $\R^d$.
\end{remark}
}
{Then}, we rewrite
\begin{equation}
\begin{aligned}\label{eq:f:E}
	\mathrm{d}f\left(\left|\E_1[X^\delta_t] - \E_1[\widetilde X^\delta_t]\right|\right) = & \: f^\prime\left(\left|\E_1[X^\delta_t] - \E_1[\widetilde X^\delta_t]\right|\right)e^\delta_t \cdot\left( G(\E_1[X^\delta_t]) - G(\E_1[\widetilde X^\delta_t])\right)\mathrm{d}t \\
	 + & \: 2\sigma_0^2f^{\prime\prime}\left(\left|\E_1[X^\delta_t] - \E_1[\widetilde X^\delta_t]\right|\right)\pi^2_\delta(E^\delta_t) \mathrm{d}t \\
	+ & \: 2\sigma_0f^\prime\left(\left|\E_1[X^\delta_t] - \E_1[\widetilde X^\delta_t]\right|\right)\pi_\delta(E^\delta_t)e^\delta_t\cdot \mathrm{d}B^0_t  + r^1_t\mathrm{d}t,
\end{aligned}
\end{equation}
where for all $t \geq 0$, $r_t^1 := \widebarroman{I}_1(t) + \widebarroman{II}_1(t) + \widebarroman{III}_1(t)$, with 

\begin{equation}\label{eq:r1}
\begin{aligned}
	 &\widebarroman{I}_1(t) = f^\prime\left( \left| E^\delta_t\right|\right)e^\delta_t\cdot\E_1\Big[ \int_0^1 \{ DG(\rho X^\delta_t + (1-\rho)\widetilde X^\delta_t) - DG(\rho\E_1[X^\delta_t] + (1-\rho)\E_1[\widetilde X^\delta_t])\}\: \mathrm{d}\rho
	\\
	&\hspace{300pt} \:(\E_1[X^\delta_t] - \E_1[\widetilde{X}^\delta_t])\Big] \: ; 
	\\
	 &\widebarroman{II}_1(t) =    f^\prime\left( \left| E^\delta_t\right|\right)e^\delta_t\cdot\E_1\Big[ \int_0^1 \{ DG(\rho X^\delta_t + (1-\rho)\widetilde X^\delta_t) - DG(\rho\E_1[X^\delta_t] + (1-\rho)\E_1[\widetilde X^\delta_t])\}\: \mathrm{d}\rho
	 \\
	&\hspace{300pt}
	 \:(A^\delta_t - \widetilde A^\delta_t)\Big] \: ; 
	\\
	 &\widebarroman{III}_1(t) = f^\prime\left( \left| E^\delta_t\right|\right)e^\delta_t\cdot\E_1\Big[ \int_0^1  DF(\rho A^\delta_t + (1-\rho)\widetilde A^\delta_t) \: \mathrm{d}\rho\:(A^\delta_t - \widetilde A^\delta_t)\Big],
\end{aligned} 
\end{equation}
where we used the following two identities: $\E_1[X^\delta_t] - \E_1[\widetilde{X}^\delta_t]+
A^\delta_t - \widetilde A^\delta_t=X^\delta_t - \widetilde X^\delta_t$
and
$\E_1[A^\delta_t] - \E_1[\widetilde{A}^\delta_t]=0$.\color{black}
~\\

\textit{Step 3}. 
{Recall \eqref{eq:ineq}:}
\begin{equation*}
	f^{\prime \prime}(r)-\frac{1}{4} r \kappa(r) f^{\prime}(r) \leq-\frac{c}{2\sigma_0^2} f(r) \quad \text { for  all } r>0.
\end{equation*}
Combining the {above equation} with the fact that, from Assumption~\ref{A1},
\begin{equation*}
	(x-y)\cdot(G(x) - G(y)) \leq -\frac{2}{\sigma_0^2}\kappa(|x-y|)|x-y|^2, \quad {x,y \in \R^d}, 
\end{equation*}
 we obtain 
\begin{equation}\label{eq:f:1}
\begin{aligned}
	& f^\prime\left(\left|\E_1[X^\delta_t] - \E_1[\widetilde X^\delta_t]\right|\right)e^\delta_t \cdot\left( G(\E_1[X^\delta_t]) - G(\E_1[\widetilde X^\delta_t])\right) \\
	& \hspace{200pt}   + 2\sigma_0^2f^{\prime\prime}\left(\left|\E_1[X^\delta_t] - \E_1[\widetilde X^\delta_t]\right|\right)\pi^2_\delta(E^\delta_t) \\
	& \leq -\frac{\sigma_0^2}{2} f^\prime\left(\left|E^\delta_t\right|\right)|E^\delta_t|\kappa\left(\left| E^\delta_t\right|\right) + 2\sigma_0^2f^{\prime\prime}\left(\left| E^\delta_t\right|\right)\pi_\delta^2(E^\delta_t)  \\
	& \leq  2\sigma_0^2\left( -\frac{1}{4} f^\prime\left(\left|E^\delta_t\right|\right)|E^\delta_t|\kappa\left(\left| E^\delta_t\right|\right) + f^{\prime\prime}\left(\left| E^\delta_t\right|\right)\right)\pi^2_\delta(E^\delta_t)  -\frac{\sigma_0^2}{2} f^\prime\left(\left|E^\delta_t\right|\right)|E^\delta_t|\kappa\left(\left| E^\delta_t\right|\right)\lambda_\delta^2(E^\delta_t)\\
	& \leq -c f\left(\left|\E_1[X^\delta_t] - \E_1[\widetilde X^\delta_t]\right|\right) + h(\delta),
\end{aligned}
\end{equation}
\normalsize
where
\begin{equation*}
	h(\delta) := \frac{\sigma_0^2}{2} f^\prime\left(\left|E^\delta_t\right|\right)|E^\delta_t|\kappa\left(\left| E^\delta_t\right|\right)\lambda_\delta^2(E^\delta_t).
\end{equation*}
Now, recalling that $\lambda_\delta(x) = 0$ as soon as $|x| \geq \delta$, we obtain for $\delta\in [0,1]$,

\begin{equation}
\label{eq:h:delta}
	|h(\delta)| \leq \frac{\sigma_0^2}{2} \|f^\prime\|_{\infty}\delta \kappa_2,
\end{equation}
\color{black}
where $\kappa_2 = \sup_{0 \leq r \leq 1} \kappa(r)$, from which we deduce that $h(\delta) \to 0$ with $\delta$.  Then, back to \eqref{eq:f:E}, we can write
\begin{align*}
	\mathrm{d}f\left(\left|\E_1[X^\delta_t] - \E_1[\widetilde X^\delta_t]\right|\right) \leq & -c f\left(\left|\E_1[X^\delta_t] - \E_1[\widetilde X^\delta_t]\right|\right)\mathrm{d}t + |r^1_t|\mathrm{d}t + h(\delta)\mathrm{d}t
	\\
	& + 2\sigma_0f^\prime\left(\left|\E_1[X^\delta_t] - \E_1[\widetilde X^\delta_t]\right|\right)\pi_\delta(E^\delta_t)e^\delta_t\cdot \mathrm{d}B^0_t,
\end{align*}
and 
\begin{equation*}
	\frac{\mathrm{d}}{\mathrm{d}t} \E_0\left[ f\left(\left|\E_1[X^\delta_t] - \E_1[\widetilde X^\delta_t]\right|\right)\right] \leq  -c \: \E_0\left[ f\left(\left|\E_1[X^\delta_t] - \E_1[\widetilde X^\delta_t]\right|\right)\right] +\E_0 [|r^1_t |] +
	{\E_0}{\left[|
	 h(\delta) | \right]}. 
\end{equation*}
Using Grönwall's Lemma, we obtain

\begin{equation}\label{eq:contraction:expectations}
\begin{split}
	\E_0\left[ f\left(\left|\E_1[X^\delta_t] - \E_1[\widetilde X^\delta_t]\right|\right)\right] &\leq \E_0\left[ f\left(\left|\E_1[X^\delta_0] - \E_1[\widetilde X^\delta_0]\right|\right)\right]e^{-ct} + \int_0^t e^{-c(t-s)} \E_0 [|r^1_s |] \mathrm{d}s 
	\\
	&\quad + \frac{h(\delta)}{c}. 
	\end{split}
\end{equation}
\color{black}
\\

\textit{Step 4}. It remains to control the three parts in the expansion of the residual term $r^1_\cdot$, see  \eqref{eq:r1}. For the two first ones, we use the Lipschitz continuity of $DG$ with respect to the norm $\triple \cdot\triple$ and write
\begin{align*}
	& |\widebarroman{I}_1(t)| \leq  C_G\E_1\left[|X^\delta_t - \E_1[X^\delta_t]|^2 + |\widetilde X^\delta_t - \E_1[\widetilde X^\delta_t]|^2\right]^{1/2}\left|\E_1[X^\delta_t] - \E_1[\widetilde X^\delta_t]\right| \: ; \\
	& |\widebarroman{II}_1(t)| \leq  C_G\E_1\left[|X^\delta_t - \E_1[X^\delta_t]|^2 + |\widetilde X^\delta_t - \E_1[\widetilde X^\delta_t]|^2\right]^{1/2}\E_1\left[|A^\delta_t - \widetilde A^\delta_t|^2\right]^{1/2}. 
\end{align*}
Letting $\eta^\delta_t = \E_1\left[|X^\delta_t - \E_1[X^\delta_t]|^2 + |\widetilde X^\delta_t - \E_1[\widetilde X^\delta_t]|^2\right]^{1/2}$, for all $t \geq 0$, and recalling from Proposition~\ref{prop:approx} {that}
\begin{equation*}
	\eta^\delta_t \leq \eta^\delta_0e^{-{c_{\alpha_F}}t} + 2\sigma { c_{\alpha_F}^{-1/2}}\sqrt{d},
\end{equation*}
we obtain
\begin{equation}\label{eq:control:E:1a2}
\begin{split}
	&|\widebarroman{I}_1(t)| + |\widebarroman{II}_1(t)| 
	\\
	&\quad \leq C_G\left(
	 \eta^\delta_0e^{-{c_{\alpha_F}}t} + 2\sigma  c_{\alpha_F}^{-1/2}\sqrt{d}
	 \right)\left(\E_1\left[|A^\delta_t - \widetilde A^\delta_t|^2\right]^{1/2} + \left|\E_1[X^\delta_t] - \E_1[\widetilde X^\delta_t]\right|\right).
	\end{split}
\end{equation}
For the term $\widebarroman{III}_1(t)$, we use the fact that $\E_1[A^\delta_t] = \E_1[\widetilde A^\delta_t] = 0$ and write 
\begin{equation}\label{eq:control:E:3}
\begin{aligned}
	|\widebarroman{III}_1(t)| = & \left|\E_1\left[ \int_0^1 \{ DF(\rho A^\delta_t + (1-\rho)\widetilde A^\delta_t) - DF(0)\}\mathrm{d}\rho\cdot  (A^\delta_t - \widetilde A^\delta_t) \right]\right| \\
	\leq & C_F\left( \eta^\delta_0e^{-{c_{\alpha_F}}t} + 2\sigma { c_{\alpha_F}^{-1/2}}\sqrt{d}
	\right)\E_1\left[|A^\delta_t - \widetilde A^\delta_t|^2\right]^{1/2},
\end{aligned}
\end{equation}
where we used once again Proposition~\ref{prop:approx} together with the Lipschitz continuity of $F$ to get the last line. Then, combining \eqref{eq:f:E}, \eqref{eq:control:E:1a2} and \eqref{eq:control:E:3}, we have
\begin{align*}
	\E_0[|r^1_t|]
	 & \leq \E_0\left[ |\widebarroman{I}_1(t)| + |\widebarroman{II}_1(t)| + |\widebarroman{III}_1(t)|\right] \\
	&  \leq (C_G + C_F )\left(  \eta^\delta_0e^{-{c_{\alpha_F}}t} + 2\sigma c_{\alpha_F}^{-1/2}\sqrt{d}\right){\E_0\left[\E_1\left[|A^\delta_t - \widetilde A^\delta_t|^2\right]^{1/2} + \left|\E_1[X^\delta_t] - \E_1[\widetilde X^\delta_t]\right|\right]}.
\end{align*}
Here, we recall that $\eta^\delta_0$ is deterministic because $m$ and $\widetilde m$ are deterministic, see Remark~\ref{rk:IC}. 
Finally, using inequalities
\eqref{eq:equivalence} and \eqref{eq:contraction:expectations}, we obtain 
\begin{equation}
\begin{aligned}\label{eq:difference:moment}
	 & \kappa_1 \E_0\big[|\E_1[X^\delta_t] -  \E_1[\widetilde X^\delta_t]|\big] \leq  e^{-ct}\left|\E_1[X^\delta_0] - \E_1[\widetilde X^\delta_0]\right| { + \frac{h(\delta)}{c} }
	 \\
	& + C_b\int_0^t { e^{-c(t-s)}} \left(  \eta^\delta_0e^{-{c_{\alpha_F} s}} + 2\sigma { c_{\alpha_F}^{-1/2}}\sqrt{d} \right)
{	\E_0\left[\E_1\left[|A^\delta_s - \widetilde A^\delta_s|^2\right]^{1/2} 
+ |\E_1[X^\delta_s] - \E_1[\widetilde X^\delta_s]|\right]} \mathrm{d}s,
\end{aligned}
\end{equation}
where $C_b := C_G + C_F $. 
~\\

\textit{Step 5}. We now focus on the {(normed)} difference 
{process $(|A^\delta_t - \widetilde A^\delta_t|)_{t \geq 0}$}. 
{Thanks to the properties of $F$, as stated in Assumption~\ref{A2}, we are here able to show that
it decreases exponentially fast  to zero.} 

{Recalling from 
Step 2 that 
$A^\delta_t = X^\delta_t - \E_1[X^\delta_t]$ for $t \geq 0$, we have}
\begin{equation*}
	\mathrm{d}A^\delta_t = \left(G(X^\delta_t) - \E_1[G(X^\delta_t)]\right)\mathrm{d}t + \left(F(X^\delta_t - \E_1[X^\delta_t]) \mathrm{d}t - \E_1[F(X^\delta_t - \E_1[X^\delta_t]]\right)\mathrm{d}t + \sigma\mathrm{d}B_t,
\end{equation*}
{from which we obtain} 
\begin{align*}
	\frac{\mathrm{d}}{\mathrm{d}t}(A^\delta_t - \widetilde A^\delta_t) = & \left[G(X^\delta_t) - G(\widetilde X^\delta_t)\right] + \left[F(A^\delta_t) - F(\widetilde A^\delta_t)\right]\\
	& - \left[\E_1[G(X^\delta_t)] - \E_1[G(\widetilde X^\delta_t)] \right]- \left[\E_1[F(A^\delta_t)] - \E_1[F(\widetilde A^\delta_t)]\right]. 
\end{align*}
\trash{It is crucial to note that we avoided reflecting on the idiosyncratic noise, which has led to a deterministic dynamic for the difference in the centered processes. This decision might seem like a disadvantage, especially when considering works such as \cite{durmus2020elementary}, where reflecting on the idiosyncratic noise enhances the contraction rate of the conditional expectation process. However, our focus diverges here since we are primarily concerned with scenarios where \(\sigma\) is small. Moreover, getting a deterministic dynamic for \(A^\delta_\cdot - \widetilde A^\delta_\cdot\) streamlines subsequent calculations and helps to understand the significant influence of the interaction term.}
Then,
\begin{equation}\label{eq:A:dynamic}
\begin{aligned}
	\frac{\mathrm{d}}{\mathrm{d}t}|A^\delta_t - \widetilde A_t^{\delta}|^2 & = 2(A^\delta_t - \widetilde A^\delta_t)\cdot(G(X_t^\delta)- G(\widetilde X^\delta_t)) \\
	& + 2(A^\delta_t - \widetilde A^\delta_t)\cdot(F(A_t^\delta)- F(\widetilde A^\delta_t)) \\
	& -2(A^\delta_t - \widetilde A^\delta_t)\cdot(\E_1[G(X_t^\delta)] - \E_1[G(\widetilde X_t^\delta)]) \\
	& - 2(A^\delta_t - \widetilde A^\delta_t)\cdot(\E_1[F(A^\delta_t)] - \E_1[F(\widetilde A^\delta_t)])
\end{aligned}
\end{equation}
We can immediately leverage on Assumption~\ref{A2} to control the second term on the right hand side:
\begin{equation}\label{eq:control:A:F}
	2(A^\delta_t - \widetilde A^\delta_t)\cdot(F(A_t^\delta)- F(\widetilde A^\delta_t)) \leq -2\alpha_F|A^\delta_t - \widetilde A^\delta_t|^2.
\end{equation}
Let us now focus on the terms involving $G$. We {have}
\begin{align*}
	&(A^\delta_t - \widetilde A^\delta_t)\cdot\left(G(X_t^\delta)- G(\widetilde X^\delta_t) -\E_1[G(X_t^\delta)] +\E_1[G(\widetilde X_t^\delta)]\right) 
	\\
	& = (A^\delta_t - \widetilde A^\delta_t)\cdot
	\biggl[ \left( \int_0^1 \left\{ DG(\rho X_t^\delta + (1-\rho)\widetilde X^\delta_t)-DG(\rho\E_1[X_t^\delta] + (1-\rho)\E_1[\widetilde X^\delta_t])\right\}\mathrm{d}\rho\right)(X^\delta_t - \widetilde X^\delta_t) \biggr] \nonumber
	\\
	& + (A^\delta_t - \widetilde A^\delta_t)\cdot \biggl[ \left( \int_0^1DG(\rho\E_1[X_t^\delta] + (1-\rho)\E_1[\widetilde X^\delta_t])\mathrm{d}\rho\right)(X^\delta_t - \widetilde X^\delta_t) \biggr] \nonumber
	\\
	& - (A^\delta_t - \widetilde A^\delta_t)\cdot\E_1\left[\left( \int_0^1 \left\{ DG(\rho X_t^\delta + (1-\rho)\widetilde X^\delta_t)-DG(\rho\E_1[X_t^\delta] + (1-\rho)\E_1[\widetilde X^\delta_t])\right\}\mathrm{d}\rho\right)(X^\delta_t - \widetilde X^\delta_t) \right] 
	\nonumber
	\\
	& - (A^\delta_t - \widetilde A^\delta_t)\cdot \biggl[ \left( \int_0^1DG(\rho\E_1[X_t^\delta] + (1-\rho)\E_1[\widetilde X^\delta_t])\mathrm{d}\rho\right)(\E_1[X^\delta_t] - \E_1[\widetilde X^\delta_t]) \biggr]. \nonumber
\end{align*}
{By combining} the second and fourth term in the right hand side, {we obtain}
\begin{align*}
	 &(A^\delta_t - \widetilde A^\delta_t)\cdot\left(G(X_t^\delta)- G(\widetilde X^\delta_t)) -\E_1[G(X_t^\delta)] +\E_1[G(\widetilde X_t^\delta)]\right)  
	 \\
	 = & (A^\delta_t - \widetilde A^\delta_t)
	 \cdot \biggl[ \left( \int_0^1 \left\{ DG(\rho X_t^\delta + (1-\rho)\widetilde X^\delta_t)-DG(\rho\E_1[X_t^\delta] + (1-\rho)\E_1[\widetilde X^\delta_t])\right\}\mathrm{d}\rho\right)(X^\delta_t - \widetilde X^\delta_t) \biggr] 
	 \\
	 + & (A^\delta_t - \widetilde A^\delta_t)\cdot\left( \int_0^1DG(\rho\E_1[X_t^\delta] + (1-\rho)\E_1[\widetilde X^\delta_t])\mathrm{d}\rho\right)(A^\delta_t - \widetilde A^\delta_t) 
	 \\
	 - & (A^\delta_t - \widetilde A^\delta_t)
	 \cdot\E_1\left[\left( \int_0^1 \left\{ DG(\rho X_t^\delta + (1-\rho)\widetilde X^\delta_t)-DG(\rho\E_1[X_t^\delta] + (1-\rho)\E_1[\widetilde X^\delta_t])\right\}\mathrm{d}\rho\right)(X^\delta_t - \widetilde X^\delta_t) \right]
	 \\
	 =:& \widebarroman{A}_1(t)+ \widebarroman{A}_2(t) + \widebarroman{A}_3(t).
\end{align*}
Then, if we perform the following decomposition of the first term on the right-hand side, we obtain
\begin{equation}
\label{eq:AA1+AA2}
	\widebarroman{A}_1(t) =  \widebarroman{AA}_1(t)+\widebarroman{AA}_2(t),
\end{equation}
with
\begin{align*}
	& \widebarroman{AA}_1(t) := (A^\delta_t - \widetilde A^\delta_t)\cdot \biggl[ \biggl( \int_0^1 \Bigl\{ DG(\rho X_t^\delta + (1-\rho)\widetilde X^\delta_t) -DG(\rho\E_1[X_t^\delta] + (1-\rho)\E_1[\widetilde X^\delta_t])\Bigr\}\mathrm{d}\rho\biggr)\\
	&\hspace*{320pt} (\E_1[X^\delta_t] - \E_1[\widetilde X^\delta_t]) \biggr] \\
	& \widebarroman{AA}_2(t) :=   (A^\delta_t - \widetilde A^\delta_t)\cdot\Big( \int_0^1 \Big\{ DG(\rho X_t^\delta + (1-\rho)\widetilde X^\delta_t) \\
	& \hspace*{190pt}-DG(\rho\E_1[X_t^\delta] + (1-\rho)\E_1[\widetilde X^\delta_t])\Big\}\mathrm{d}\rho\Big)(A^\delta_t - \widetilde A^\delta_t).
\end{align*}

Then, combining the terms
${\widebarroman{A}_2(t)}$
and ${\widebarroman{AA}_2(t)}$, we get
\begin{equation}
\label{eq:decomposition:AA1:A2:AA2:A3}
\begin{split}
	 &(A^\delta_t - \widetilde A^\delta_t)\cdot\left(G(X_t^\delta)- G(\widetilde X^\delta_t) -\E_1[G(X_t^\delta)] +\E_1[G(\widetilde X_t^\delta)]\right) 
	 \\
	 &=  { \widebarroman{AA}_1(t) + \left[ \widebarroman{A}_2(t) +  \widebarroman{AA}_2(t)\right] + 
	  \widebarroman{A}_3(t)}
	 \\
	&=: {\widebarroman{AA}_1(t) + \widebarroman{II}_2(t) + \widebarroman{A}_3(t)},
	\end{split} 
\end{equation}
{where $\widebarroman{AA}_1(t)$ and 
$\widebarroman{A}_3(t)$ have been already defined and 
$\widebarroman{II}_2(t)$ is defined by}   
\begin{align*}
	& \widebarroman{II}_2(t) := (A^\delta_t - \widetilde A^\delta_t)\cdot\left( \int_0^1DG(\rho X_t^\delta + (1-\rho)\widetilde X^\delta_t)\mathrm{d}\rho\right)(A^\delta_t - \widetilde A^\delta_t).
\end{align*}
We now control $\widebarroman{II}_2(t)$ {as we handled} Equation \eqref{eq:control:A:F}. We get 
\begin{equation}
\label{eq:control:II_2}
	{\widebarroman{II}_2(t)} \leq m_G|A^\delta_t - \widetilde{A}^\delta_t|^2, 
\end{equation}
{where we recall from 
\eqref{eq:lowerboundkappa}
that $m_G \in \R$ stands for the upper bound
of the function $r  \mapsto [ - \sigma_0^2 \kappa(r)/2]$ introduced in 
Assumption~\ref{A1}. Indeed, as a consequence of \ref{A1},} one can show that for any $x,y\in \R^d$, 
\begin{equation*}
	y \cdot \bigl(DG(x)y \bigr) \leq m_G|y|^2.
\end{equation*}
{
Then, 
inserting \eqref{eq:control:A:F}, 
\eqref{eq:decomposition:AA1:A2:AA2:A3}
and 
\eqref{eq:control:II_2}
into \eqref{eq:A:dynamic}, we obtain} 
\begin{equation*}
\begin{split}
	&\frac{\mathrm{d}}{\mathrm{d}t}|A^\delta_t - \widetilde A_t^{\delta}|^2 
	\\
	&\leq  -2({\alpha_F}- m_G)|A^\delta_t - \widetilde A^\delta_t|^2 + \widebarroman{AA}_1(t) + \widebarroman{A}_3(t) +  2(A^\delta_t - \widetilde A^\delta_t)\cdot(\E_1[F(A^\delta_t)] - \E_1[F(\widetilde A^\delta_t)]).
	\end{split}
\end{equation*}
{By taking expectation under $\mathbb{P}_1$ and by using 
the fact that 
$\E_1[A^\delta_t] = \E_1[\widetilde A^\delta_t] = 0$, we get rid the last two terms 
in the above right-hand side. We obtain}
\begin{equation}\label{eq:control:2}
	\frac{\mathrm{d}}{\mathrm{d}t}\E_1\left[|A^\delta_t - \widetilde A_t^{\delta}|^2\right] \leq -2(\alpha_F-m_G)\E_1\left[|A^\delta_t - \widetilde A_t^{\delta}|^2\right] + {2} \E_1\left[ \widebarroman{AA}_1(t)\right]. 
\end{equation}
In order to control $\widebarroman{AA}_1(t)$, we 
{come back to 
\eqref{eq:AA1+AA2} and}
use the fact that $\E_1[X^\delta_t] - \E_1[\widetilde X^\delta_t]$ is measurable with respect to the common noise. 
{By 
Cauchy-Schwarz' inequality,} we obtain
\begin{equation*}
	\E_1\left[ \widebarroman{AA}_1(t)\right] \leq C_G\E_1[|A^\delta_t - \widetilde A^\delta_t|^2]^{1/2} \eta^\delta_t |\E_1[X^\delta_t] - \E_1[\widetilde X^\delta_t]|,
\end{equation*}
where we recall that $\eta^\delta_t = \E_1[|A^\delta_t|^2 + |\widetilde{A}^\delta_t|^2]^{1/2}$. Then, from Proposition~\ref{prop:approx}, we get
\begin{equation*}
	\E_1\left[ \widebarroman{AA}_1(t)\right] \leq \E_1[|A^\delta_t - \widetilde A^\delta_t|^2]^{1/2} \left( \eta^\delta_0 e^{-c_{\alpha_F}t} + \sigma c_{\alpha_F}^{-1/2}\sqrt{d}\right) |\E_1[X^\delta_t] - \E_1[\widetilde X^\delta_t]|,
\end{equation*}
and  \eqref{eq:control:2} rewrites
\begin{equation}
\begin{aligned}\label{eq:before:le2}
	\frac{\mathrm{d}}{\mathrm{d}t}\E_1\left[|A^\delta_t - \widetilde A_t^{\delta}|^2\right] \leq & -2(\alpha_F-m_G)\E_1\left[|A^\delta_t - \widetilde A_t^{\delta}|^2\right] \\
	&+ 2\E_1[|A^\delta_t - \widetilde A^\delta_t|^2]^{1/2} \left( \eta^\delta_0 e^{-c_{\alpha_F}t} + \sigma c_{\alpha_F}^{-1/2}\sqrt{d}\right) |\E_1[X^\delta_t] - \E_1[\widetilde X^\delta_t]|. 
\end{aligned}
\end{equation}
Then, we get 
\begin{equation}
\begin{aligned}\label{eq:sqrt}
	\frac{\mathrm{d}}{\mathrm{d}t} \E_1\left[ |A^\delta_t - \widetilde A^\delta_t|^2\right]^{1/2} \leq & -({\alpha_F}- m_G)\E_1\left[ |A^\delta_t - \widetilde A^\delta_t|^2\right]^{1/2} 
	\\
	& + C_G(\eta^\delta_0e^{-c_{\alpha_F}t} + \sigma c_{\alpha_F}^{-1/2}\sqrt{d})|\E_1[X^\delta_t] - \E_1[\widetilde X^\delta_t]|,
\end{aligned}
\end{equation}
see Appendix~\ref{app:A2} for details. Using Grönwall's Lemma, we obtain
\begin{align*}
	\E_1[|A^\delta_t - \widetilde{A}^\delta_t|^2]^{1/2} \leq \ & \E_1[|A^\delta_0 - \widetilde{A}^\delta_0|^2]^{1/2}\exp\left( -({\alpha_F}- m_G) t\right) \\
	& + C_G\int_0^t e^{ -({\alpha_F}- m_G)(t-s)}(\eta^\delta_0e^{-c_{\alpha_F}s} + \sigma c_{\alpha_F}^{-1/2}\sqrt{d})|\E_1[X^\delta_s] - \E_1[\widetilde X^\delta_s]| \mathrm{d}s. 
\end{align*}
Finally, taking again expectation $\E_0$, 
\begin{equation}\label{eq:difference:variance}
\begin{aligned}
	&\E_0\left[\E_1[|A^\delta_t -  \widetilde A^\delta_t|^2]^{1/2}\right] \leq  \exp(-({\alpha_F}- m_G)t)\E_1[|A^\delta_0 - \widetilde A^\delta_0|^2]^{1/2} \\
	& + C_G\int_0^t
	{e^{-({\alpha_F}- m_G)(t-s)}
	(\eta^\delta_0 e^{-c_{\alpha_F}s} + \sigma c_{\alpha_F}^{-1/2}\sqrt{d})} 
	\E_0\left[{|\E_1[X^\delta_s] - \E_1[\widetilde X^\delta_s]|}\right] \mathrm{d}s.
\end{aligned}
\end{equation}
~\\

\textit{Step 6.} Now, as \eqref{eq:difference:moment} remains true replacing the rate $c$ by $\widetilde c \leq c$, we can assume without loss of generality that ${\alpha_F}> m_G + c$. We can then combine Equations \eqref{eq:difference:moment} and \eqref{eq:difference:variance} to obtain, for all $t \geq 0$,
\begin{equation}\label{eq:finaleq:delta}
\begin{aligned}
	& {\E_0\left[\E_1\left[|A^\delta_t - \widetilde A^\delta_t|^2\right]^{1/2}+|\E_1[X^\delta_t] - \E_1[\widetilde X^\delta_t]|\right] }
	\\
	&\leq  \left(\E_1\left[|A^\delta_0 - \widetilde A^\delta_0|^2\right]^{1/2}  + |\E_1[X^\delta_0] - \E_1[\widetilde X^\delta_0]|\right)e^{-ct} 
	 + \frac{h(\delta)}{c}
	\\
	& + c_3^b\int_0^t 
	 { e^{-c(t-s)}} \left(  \eta^\delta_0e^{-{c_{\alpha_F} s}} + 2\sigma { c_{\alpha_F}^{-1/2}}\sqrt{d} \right)
	{\E_0\left[\E_1\left[|A^\delta_s - \widetilde A^\delta_s|^2\right]^{1/2}+
	|\E_1[X^\delta_s] - \E_1[\widetilde X^\delta_s]|\right] } \mathrm{d}s,
\end{aligned}
\end{equation}
for some constant $c_3^b> 0$ that depends on $F$ and $G$. 

 With the notation $\Theta^\delta_t := \E_1\left[|A^\delta_t - \widetilde A^\delta_t|^2\right]^{1/2} + |\E_1[X^\delta_t] - \E_1[\widetilde X^\delta_t]|$,
Equation \eqref{eq:finaleq:delta} reads
\begin{equation*}
	\E_0[\Theta_t^\delta] \leq \Theta_0^\delta e^{-ct} 
	 + \frac{h(\delta)}{c} + c_3^b\int_0^t 
	 { e^{-c(t-s)}} \left(  \eta^\delta_0e^{-{c_{\alpha_F} s}} + 2\sigma { c_{\alpha_F}^{-1/2}}\sqrt{d} \right)
	{\E_0\left[\Theta^\delta_s\right] } \mathrm{d}s.
\end{equation*}
Then, using Grönwall Lemma, we obtain
\begin{equation}\label{eq:G:delta}
\begin{aligned}
	e^{ct}{\E_0\left[\Theta^\delta_t\right] } \leq & \left(\Theta_0 + \frac{h(\delta)}{c}\right)\exp\left(\frac{c_3^b}{c_{\alpha_F}}\eta_0 + \frac{c_3^b\sigma\sqrt{d}}{\sqrt{c_{\alpha_F}}}t\right) \\
	& + h(\delta)\int_0^t e^{cs}\exp\left(\frac{c_3^b}{c_{\alpha_F}}\eta_0 + \frac{c_3^b\sigma\sqrt{d}}{\sqrt{c_{\alpha_F}}}(t-s)\right)\mathrm{d}s.
\end{aligned}
\end{equation}
We emphasize here that we write $\Theta_0$ and $\eta_0$ without reference to $\delta$ because these quantities are predetermined and independent of $\delta$. Now, let us notice that creating the couplage we defined two Brownian motions adapted to the filtration $\mathbb{F}_0$:
\begin{align*}
	 \beta^\delta_t &= \int_0^t \pi_\delta(E^\delta_s)( \mathrm{Id}  - 2e^\delta_s(e^\delta_s)^\top)\mathrm{d}B^0_s + \int_0^t \lambda_\delta(E^\delta_s)\mathrm{d}\widetilde B^0_s\\
 	 \widetilde\beta^\delta_t &= \int_0^t \pi_\delta(E^\delta_s)\mathrm{d}B^0_s + \int_0^t \lambda_\delta(E^\delta_s)\mathrm{d}\widetilde B^0_s.
\end{align*}
Setting $m^\delta_t = \mathcal{L}_1(X^\delta_t)$ et $\widetilde m^\delta_t = \mathcal{L}_1(\widetilde X^\delta_t)$, we get
\begin{align*}
	& \mathrm{d}_t  m^\delta_t = \nabla \cdot \left(\frac{\sigma^2 + \sigma_0^2}{2} \nabla m^\delta_t - m^\delta_t \left( G + F(\cdot - \mu_1(m^\delta_t))\right)\right)\mathrm{d}t -\sigma_0 \nabla m^\delta_t \cdot \mathrm{d}\beta^\delta_t,\\
	& \mathrm{d}_t  \widetilde m^\delta_t = \nabla \cdot \left(\frac{\sigma^2 + \sigma_0^2}{2} \nabla \widetilde m^\delta_t - \widetilde m^\delta_t \left( G + F(\cdot - \mu_1(\widetilde m^\delta_t))\right)\right)\mathrm{d}t -\sigma_0 \nabla \widetilde m^\delta_t \cdot \mathrm{d}\widetilde \beta^\delta_t.
\end{align*}
Then, one has $\mathscr{P}_t\phi(m) = \E_0\left[ \phi(m^\delta_t)| m^\delta_0 = m\right]$ and $\mathscr{P}_t\phi(\widetilde m) = \E_0\left[ \phi(\widetilde m^\delta_t)| \widetilde m^\delta_0 = \widetilde m\right]$. Then, we obtain that for any $\delta > 0$, 
\begin{equation}
\begin{aligned}\label{eq:SG}
	\left|\mathscr{P}_t\phi(m) - \mathscr{P}_t\phi(\widetilde m)\right| & \leq  \| \phi\|_{\mathrm{lip}} \E_0\left[\E_1\left[\left|X^\delta_t - \widetilde X^\delta_t\right|\right]\right]\\
	& \leq \| \phi\|_{\mathrm{lip}} \E_0[\Theta^\delta_t]
\end{aligned}
\end{equation}
Plugging the estimate given by Equation \eqref{eq:G:delta} into Equation \eqref{eq:SG} gives
\begin{align*}
	\left|\mathscr{P}_t\phi(m) - \mathscr{P}_t\phi(\widetilde m)\right| \leq & \|\phi\|_{\mathrm{lip}}\left(\Theta_0 + \frac{h(\delta)}{c}\right)\exp\left(\frac{c_3^b}{c_{\alpha_F}}\eta_0 + \left(\frac{c_3^b\sigma\sqrt{d}}{\sqrt{c_{\alpha_F}}}-c\right)t\right) \\
	& + \|\phi\|_{\mathrm{lip}}h(\delta)\int_0^t e^{-c(t-s)}\exp\left(\frac{c_3^b}{c_{\alpha_F}}\eta_0 + \frac{c_3^b\sigma\sqrt{d}}{\sqrt{c_{\alpha_F}}}(t-s)\right)\mathrm{d}s.
\end{align*}
The latter being true for all $\delta > 0$, we can consider the limit $\delta \to 0$ to finally obtain
\begin{equation*}
\left|\mathscr{P}_t\phi(m) - \mathscr{P}_t\phi(\widetilde m)\right| \leq \|\phi\|_{\mathrm{lip}}\Theta_0 \exp\left(\frac{c_3^b}{c_{\alpha_F}}\eta_0 + \frac{c_3^b\sigma\sqrt{d}}{\sqrt{c_{\alpha_F}}}t -ct\right).
\end{equation*}  
Then, {for $\sigma$ and $\alpha$ satisfying $\mathfrak{c} := \sigma c_3^bc_{\alpha_F}^{-1/2} - c < 0$}, 
we get the expected exponential rate of convergence. Moreover, writing
\begin{align*}
	\Theta_0\exp\left( \frac{c_3^b}{c_{\alpha_F}}\eta_0\right) = & \left(\E_1\left[|A_0 - \widetilde A_0|^2\right]^{1/2}  + |\E_1[X_0] - \E_1[\widetilde X_0]|\right)\exp\left(\frac{c_3^b}{c_{\alpha_F}}\eta_0\right) \\
	\leq & C_2\left( \mu_2(m)^{1/2} + \mu_2(\widetilde m)^{1/2} \right) \exp\left( \frac{c_3^b}{c_{\alpha_F}}(v(m)^{1/2} + v(\widetilde m)^{1/2})\right),
\end{align*}
{for a new constant positive $C_2$, 
we complete the proof of Proposition~\ref{prop:p1}.}
\end{proof}

\subsection*{Acknowledgments} 
Fran\c{c}ois Delarue acknowledges the financial support of the European Research Council (ERC) under the European Union’s Horizon 2020 research and innovation program (AdG ELISA project, Grant agreement No. 101054746). Rapha\"{e}l Maillet wishes to thank the Universit\'{e} C\^{o}te d'Azur for its hospitality during the preparation of this work, along with Pierre Cardaliaguet for fruitful discussions.

\trash{
\color{blue}
Check the assumptions: 
\begin{itemize}
	\item $a(x) = \sigma_0^2$;
	\item By convexity assumption of the potential $V$ at infinity and as $\nabla V$ is assumed to be Lipschitz continuous, we know that $\nabla V$ behaves like $|x|$ at infinity. Then, 
	\begin{equation*}
		\beta_1 |x|^2 - C_1 \leq \nabla V(x)\cdot x \leq \beta_2 |x|^2 - C_2,
	\end{equation*}
	with $\beta_1 > 0$. 
\end{itemize}
$\Rightarrow$ The previous two points imply $(A1)$ and $(A2)$ of \cite{ichihara2013large}. 
Moreover, $DG$ is bounded and then $(A3')$ is satisfied. 
\color{black}}

\bibliographystyle{amsplainhyper_m}
\bibliography{biblio}

\providecommand{\bysame}{\leavevmode\hbox to3em{\hrulefill}\thinspace}
\begin{thebibliography}{10}

\bibitem{ambrosio2005gradient}
L.~Ambrosio, N.~Gigli, and G.~Savar\'{e}, \emph{Gradient flows in metric spaces
  and in the space of probability measures}, second ed., Lectures in
  Mathematics ETH Z\"{u}rich, Birkh\"{a}user Verlag, Basel, 2008. \MR{2401600}

\bibitem{ane2000inegalite}
C.~An\'{e}, S.~Blach\`ere, D.~Chafa\"{\i}, P.~Foug\`eres, I.~Gentil,
  F.~Malrieu, C.~Roberto, and G.~Scheffer, \emph{Sur les in\'{e}galit\'{e}s de
  {S}obolev logarithmiques}, With a preface by D. Bakry and M. Ledoux,
  Panoramas et Synth\`eses [Panoramas and Syntheses], vol.~10, Soci\'{e}t\'{e}
  Math\'{e}matique de France, Paris, 2000. \MR{1845806}

\bibitem{angeli2023mckean}
L.~Angeli, J.~Barr\'{e}, M.~Kolodziejczyk, and M.~Ottobre, \emph{Well-posedness
  and stationary solutions of {M}c{K}ean-{V}lasov ({S}){PDE}s}, J. Math. Anal.
  Appl. \textbf{526} (2023), no.~2, Paper No. 127301, 49,
  \href{http://dx.doi.org/10.1016/j.jmaa.2023.127301}{\path{doi}}. \MR{4579326}

\bibitem{bakry2008simple}
D.~Bakry, F.~Barthe, P.~Cattiaux, and A.~Guillin, \emph{A simple proof of the
  {P}oincar\'{e} inequality for a large class of probability measures including
  the log-concave case}, Electron. Commun. Probab. \textbf{13} (2008), 60--66,
  \href{http://dx.doi.org/10.1214/ECP.v13-1352}{\path{doi}}. \MR{2386063}

\bibitem{bakry1985diffusions}
D.~Bakry and Michel \'{E}mery, \emph{Diffusions hypercontractives},
  S\'{e}minaire de probabilit\'{e}s, {XIX}, 1983/84, Lecture Notes in Math.,
  vol. 1123, Springer, Berlin, 1985, pp.~177--206,
  \href{http://dx.doi.org/10.1007/BFb0075847}{\path{doi}}. \MR{889476}

\bibitem{bashiri2020long}
K.~Bashiri, \emph{On the long-time behaviour of {M}c{K}ean-{V}lasov paths},
  Electron. Commun. Probab. \textbf{25} (2020), Paper No. 52, 14,
  \href{http://dx.doi.org/10.1214/20-ecp330}{\path{doi}}. \MR{4129676}

\bibitem{benachour1998nonlinear}
S.~Benachour, B.~Roynette, and P.~Vallois, \emph{Nonlinear self-stabilizing
  processes. {II}. {C}onvergence to invariant probability}, Stochastic Process.
  Appl. \textbf{75} (1998), no.~2, 203--224,
  \href{http://dx.doi.org/10.1016/S0304-4149(98)00019-2}{\path{doi}}.
  \MR{1632197}

\bibitem{bolley2013uniform}
F.~Bolley, I.~Gentil, and A.~Guillin, \emph{Uniform convergence to equilibrium
  for granular media}, Arch. Ration. Mech. Anal. \textbf{208} (2013), no.~2,
  429--445, \href{http://dx.doi.org/10.1007/s00205-012-0599-z}{\path{doi}}.
  \MR{3035983}

\bibitem{briand2020forward}
P.~Briand, P.~Cardaliaguet, P.-E. Chaudru~de Raynal, and Y.~Hu, \emph{Forward
  and backward stochastic differential equations with normal constraints in
  law}, Stochastic Process. Appl. \textbf{130} (2020), no.~12, 7021--7097,
  \href{http://dx.doi.org/10.1016/j.spa.2020.07.007}{\path{doi}}. \MR{4167200}

\bibitem{butkovsky2014ergPropNLMCandMKV}
O.~A. Butkovsky, \emph{On ergodic properties of nonlinear {M}arkov chains and
  stochastic {M}c{K}ean-{V}lasov equations}, Theory Probab. Appl. \textbf{58}
  (2014), no.~4, 661--674,
  \href{http://dx.doi.org/10.1137/S0040585X97986825}{\path{doi}}. \MR{3403022}

\bibitem{cardaliaguet2010notes}
P.~Cardaliaguet, \emph{Notes on mean field games}, Tech. report, Technical
  report, 2010.

\bibitem{cardaliaguet2019master}
P.~Cardaliaguet, F.~Delarue, J.-M. Lasry, and P.-L. Lions, \emph{The master
  equation and the convergence problem in mean field games}, Annals of
  Mathematics Studies, vol. 201, Princeton University Press, Princeton, NJ,
  2019, \href{http://dx.doi.org/10.2307/j.ctvckq7qf}{\path{doi}}. \MR{3967062}

\bibitem{CD2018_1}
R.~Carmona and F.~Delarue, \emph{Probabilistic theory of mean field games with
  applications. {I}}, Mean field FBSDEs, control, and games, Probability Theory
  and Stochastic Modelling, vol.~83, Springer, Cham, 2018. \MR{3752669}

\bibitem{CD2018_2}
\bysame, \emph{Probabilistic theory of mean field games with applications.
  {II}}, Mean field games with common noise and master equations, Probability
  Theory and Stochastic Modelling, vol.~84, Springer, Cham, 2018. \MR{3753660}

\bibitem{carrillo2003kinetic}
J.A. Carrillo, R.J. McCann, and C.~Villani, \emph{Kinetic equilibration rates
  for granular media and related equations: entropy dissipation and mass
  transportation estimates}, Rev. Mat. Iberoamericana \textbf{19} (2003),
  no.~3, 971--1018, \href{http://dx.doi.org/10.4171/RMI/376}{\path{doi}}.
  \MR{2053570}

\bibitem{cattiaux2008probabilistic}
P.~Cattiaux, A.~Guillin, and F.~Malrieu, \emph{Probabilistic approach for
  granular media equations in the non-uniformly convex case}, Probab. Theory
  Related Fields \textbf{140} (2008), no.~1-2, 19--40,
  \href{http://dx.doi.org/10.1007/s00440-007-0056-3}{\path{doi}}. \MR{2357669}

\bibitem{coghi2019stochastic}
M.~Coghi and B.~Gess, \emph{Stochastic nonlinear {F}okker-{P}lanck equations},
  Nonlinear Anal. \textbf{187} (2019), 259--278,
  \href{http://dx.doi.org/10.1016/j.na.2019.05.003}{\path{doi}}. \MR{3954095}

\bibitem{conforti2023coupling}
G.~Conforti, \emph{Coupling by reflection for controlled diffusion processes:
  turnpike property and large time behavior of {H}amilton-{J}acobi-{B}ellman
  equations}, Ann. Appl. Probab. \textbf{33} (2023), no.~6A, 4608--4644,
  \href{http://dx.doi.org/10.1214/22-aap1927}{\path{doi}}. \MR{4674060}

\bibitem{1983JSP....31...29D}
D.A. Dawson, \emph{Critical dynamics and fluctuations for a mean-field model of
  cooperative behavior}, J. Statist. Phys. \textbf{31} (1983), no.~1, 29--85,
  \href{http://dx.doi.org/10.1007/BF01010922}{\path{doi}}. \MR{711469}

\bibitem{dawson1995stochastic}
D.A Dawson and J.~Vaillancourt, \emph{Stochastic {M}c{K}ean-{V}lasov
  equations}, NoDEA Nonlinear Differential Equations Appl. \textbf{2} (1995),
  no.~2, 199--229, \href{http://dx.doi.org/10.1007/BF01295311}{\path{doi}}.
  \MR{1328577}

\bibitem{delmoral2019uniform}
P.~Del~Moral and J.~Tugaut, \emph{Uniform propagation of chaos and creation of
  chaos for a class of nonlinear diffusions}, Stoch. Anal. Appl. \textbf{37}
  (2019), no.~6, 909--935,
  \href{http://dx.doi.org/10.1080/07362994.2019.1622426}{\path{doi}}.
  \MR{4020054}

\bibitem{delarue2019restoring}
F.~Delarue, \emph{Restoring uniqueness to mean-field games by randomizing the
  equilibria}, Stoch. Partial Differ. Equ. Anal. Comput. \textbf{7} (2019),
  no.~4, 598--678,
  \href{http://dx.doi.org/10.1007/s40072-019-00135-9}{\path{doi}}. \MR{4022284}

\bibitem{delarue2020selection}
F.~Delarue and R.~{Foguen-Tchuendom}, \emph{Selection of equilibria in a linear
  quadratic mean-field game}, Stochastic Process. Appl. \textbf{130} (2020),
  no.~2, 1000--1040,
  \href{http://dx.doi.org/10.1016/j.spa.2019.04.005}{\path{doi}}. \MR{4046528}

\bibitem{delarue2024rearranged}
F.~Delarue and W.~Hammersley, \emph{Rearranged stochastic heat equation:
  Ergodicity and related gradient descent on the space of probability
  measures}, arXiv 2403.16140 (2024).

\bibitem{durmus2020elementary}
A.~Durmus, A.~Eberle, A.~Guillin, and R.~Zimmer, \emph{An elementary approach
  to uniform in time propagation of chaos}, Proc. Amer. Math. Soc. \textbf{148}
  (2020), no.~12, 5387--5398,
  \href{http://dx.doi.org/10.1090/proc/14612}{\path{doi}}. \MR{4163850}

\bibitem{eberle2016reflection}
A.~Eberle, \emph{Reflection couplings and contraction rates for diffusions},
  Probab. Theory Related Fields \textbf{166} (2016), no.~3-4, 851--886,
  \href{http://dx.doi.org/10.1007/s00440-015-0673-1}{\path{doi}}. \MR{3568041}

\bibitem{eberle2019sticky}
A.~Eberle and R.~Zimmer, \emph{Sticky couplings of multidimensional diffusions
  with different drifts}, Ann. Inst. Henri Poincar\'{e} Probab. Stat.
  \textbf{55} (2019), no.~4, 2370--2394,
  \href{http://dx.doi.org/10.1214/18-AIHP951}{\path{doi}}. \MR{4029157}

\bibitem{fehrman2019well}
B.~Fehrman and B.~Gess, \emph{Well-posedness of nonlinear diffusion equations
  with nonlinear, conservative noise}, Arch. Ration. Mech. Anal. \textbf{233}
  (2019), no.~1, 249--322,
  \href{http://dx.doi.org/10.1007/s00205-019-01357-w}{\path{doi}}. \MR{3974641}

\bibitem{Foguen}
R.~Foguen-Tchuendom, \emph{Uniqueness for linear-quadratic mean field games
  with common noise}, Dyn. Games Appl. \textbf{8} (2018), no.~1, 199--210,
  \href{http://dx.doi.org/10.1007/s13235-016-0200-8}{\path{doi}}. \MR{3764697}

\bibitem{friz2016stochastic}
P.K. Friz and B.~Gess, \emph{Stochastic scalar conservation laws driven by
  rough paths}, Ann. Inst. H. Poincar\'{e} C Anal. Non Lin\'{e}aire \textbf{33}
  (2016), no.~4, 933--963,
  \href{http://dx.doi.org/10.1016/j.anihpc.2015.01.009}{\path{doi}}.
  \MR{3519527}

\bibitem{funaki1984certain}
T.~Funaki, \emph{A certain class of diffusion processes associated with
  nonlinear parabolic equations}, Z. Wahrsch. Verw. Gebiete \textbf{67} (1984),
  no.~3, 331--348, \href{http://dx.doi.org/10.1007/BF00535008}{\path{doi}}.
  \MR{762085}

\bibitem{gess2014scalar}
B.~Gess and P.E. Souganidis, \emph{Scalar conservation laws with multiple rough
  fluxes}, Commun. Math. Sci. \textbf{13} (2015), no.~6, 1569--1597,
  \href{http://dx.doi.org/10.4310/CMS.2015.v13.n6.a10}{\path{doi}}.
  \MR{3351442}

\bibitem{gess2017stochastic}
\bysame, \emph{Stochastic non-isotropic degenerate parabolic-hyperbolic
  equations}, Stochastic Process. Appl. \textbf{127} (2017), no.~9, 2961--3004,
  \href{http://dx.doi.org/10.1016/j.spa.2017.01.005}{\path{doi}}. \MR{3682120}

\bibitem{hammersley2021weak}
W.~Hammersley, D.~\v{S}i\v{s}ka, and \L. Szpruch, \emph{Weak existence and
  uniqueness for {M}c{K}ean-{V}lasov {SDE}s with common noise}, Ann. Probab.
  \textbf{49} (2021), no.~2, 527--555,
  \href{http://dx.doi.org/10.1214/20-aop1454}{\path{doi}}. \MR{4255126}

\bibitem{herrmann2010non}
S.~Herrmann and J.~Tugaut, \emph{Non-uniqueness of stationary measures for
  self-stabilizing processes}, Stochastic Process. Appl. \textbf{120} (2010),
  no.~7, 1215--1246,
  \href{http://dx.doi.org/10.1016/j.spa.2010.03.009}{\path{doi}}. \MR{2639745}

\bibitem{jianhai2024long}
B.~Jianhai and W.~Jian, \emph{Long time behavior of one-dimensional
  mckean-vlasov sdes with common noise}, arXiv:2401.07665 (2024).

\bibitem{JKO}
R.~Jordan, D.~Kinderlehrer, and F.~Otto, \emph{The variational formulation of
  the {F}okker-{P}lanck equation}, SIAM J. Math. Anal. \textbf{29} (1998),
  no.~1, 1--17, \href{http://dx.doi.org/10.1137/S0036141096303359}{\path{doi}}.
  \MR{1617171}

\bibitem{kac1956foundations}
M.~Kac, \emph{Foundations of kinetic theory}, Proceedings of the {T}hird
  {B}erkeley {S}ymposium on {M}athematical {S}tatistics and {P}robability,
  1954--1955, vol. {III}, Univ. California Press, Berkeley-Los Angeles, Calif.,
  1956, pp.~171--197. \MR{84985}

\bibitem{kurtz1999particle}
T.G. Kurtz and J.~Xiong, \emph{Particle representations for a class of
  nonlinear {SPDE}s}, Stochastic Process. Appl. \textbf{83} (1999), no.~1,
  103--126, \href{http://dx.doi.org/10.1016/S0304-4149(99)00024-1}{\path{doi}}.
  \MR{1705602}

\bibitem{lacker2022superposition}
D.~Lacker, M.~Shkolnikov, and J.~Zhang, \emph{Superposition and mimicking
  theorems for conditional {M}c{K}ean-{V}lasov equations}, J. Eur. Math. Soc.
  (JEMS) \textbf{25} (2023), no.~8, 3229--3288,
  \href{http://dx.doi.org/10.4171/jems/1266}{\path{doi}}. \MR{4612111}

\bibitem{lindvall1986coupling}
T.~Lindvall and L.~C.~G. Rogers, \emph{Coupling of multidimensional diffusions
  by reflection}, Ann. Probab. \textbf{14} (1986), no.~3, 860--872,
  \url{https://www.jstor.org/stable/pdf/2244138.pdf}. \MR{841588}

\bibitem{lions2013scalar}
P.-L. Lions, B.~Perthame, and P.E. Souganidis, \emph{Scalar conservation laws
  with rough (stochastic) fluxes}, Stoch. Partial Differ. Equ. Anal. Comput.
  \textbf{1} (2013), no.~4, 664--686,
  \href{http://dx.doi.org/10.1007/s40072-013-0021-3}{\path{doi}}. \MR{3327520}

\bibitem{lions2014scalar}
P.-L Lions, B.~Perthame, and P.E. Souganidis, \emph{Scalar conservation laws
  with rough (stochastic) fluxes: the spatially dependent case}, Stoch. Partial
  Differ. Equ. Anal. Comput. \textbf{2} (2014), no.~4, 517--538,
  \href{http://dx.doi.org/10.1007/s40072-014-0038-2}{\path{doi}}. \MR{3274890}

\bibitem{maillet2023note}
R.~Maillet, \emph{A note on the {L}ong-{T}ime behaviour of {S}tochastic
  {M}c{K}ean-{V}lasov {E}quations with common noise}, arXiv:2306.16130 (2023).

\bibitem{malrieu2001logarithmic}
F.~Malrieu, \emph{Logarithmic {S}obolev inequalities for some nonlinear
  {PDE}'s}, Stochastic Process. Appl. \textbf{95} (2001), no.~1, 109--132,
  \href{http://dx.doi.org/10.1016/S0304-4149(01)00095-3}{\path{doi}}.
  \MR{1847094}

\bibitem{malrieu2003}
\bysame, \emph{Convergence to equilibrium for granular media equations and
  their {E}uler schemes}, Ann. Appl. Probab. \textbf{13} (2003), no.~2,
  540--560, \href{http://dx.doi.org/10.1214/aoap/1050689593}{\path{doi}}.
  \MR{1970276}

\bibitem{mckean1966class}
H.~P. McKean, \emph{A class of {M}arkov processes associated with nonlinear
  parabolic equations}, Proc. Nat. Acad. Sci. U.S.A. \textbf{56} (1966),
  1907--1911, \href{http://dx.doi.org/10.1073/pnas.56.6.1907}{\path{doi}}.
  \MR{221595}

\bibitem{meleard_1996}
S.~M{\'e}l{\'e}ard, \emph{Asymptotic behaviour of some interacting particle
  systems; {M}c{K}ean-{V}lasov and {B}oltzmann models}, Probabilistic models
  for nonlinear partial differential equations ({M}ontecatini {T}erme, 1995),
  Lecture Notes in Math., vol. 1627, Springer, Berlin, 1996, pp.~42--95,
  \href{http://dx.doi.org/10.1007/BFb0093177}{\path{doi}}. \MR{1431299}

\bibitem{Otto1}
F.~Otto, \emph{Evolution of microstructure in unstable porous media flow: a
  relaxational approach}, Comm. Pure Appl. Math. \textbf{52} (1999), no.~7,
  873--915,
  \href{http://dx.doi.org/10.1002/(SICI)1097-0312(199907)52:7<873::AID-CPA5>3.3.CO;2-K}{\path{doi}}.
  \MR{1682800}

\bibitem{Otto2}
\bysame, \emph{The geometry of dissipative evolution equations: the porous
  medium equation}, Comm. Partial Differential Equations \textbf{26} (2001),
  no.~1-2, 101--174,
  \href{http://dx.doi.org/10.1081/PDE-100002243}{\path{doi}}. \MR{1842429}

\bibitem{revuz2013continuous}
D.~Revuz and M.~Yor, \emph{Continuous martingales and brownian motion}, vol.
  293, Springer Science \& Business Media, 2013.

\bibitem{sznitman1991topics}
A.-S. Sznitman, \emph{Topics in propagation of chaos}, \'{E}cole d'\'{E}t\'{e}
  de {P}robabilit\'{e}s de {S}aint-{F}lour {XIX}---1989, Lecture Notes in
  Math., vol. 1464, Springer, Berlin, 1991, pp.~165--251,
  \href{http://dx.doi.org/10.1007/BFb0085169}{\path{doi}}. \MR{1108185}

\end{thebibliography}


\appendix
\section{}\label{Appendix}
\subsection{Proof of Equation \eqref{eq:abs}}
\begin{proof}
Using Itô's formula, and recalling that $E^\delta = X^\delta - \widetilde X^\delta$, we get that for any $\delta > 0$,
	\begin{align*}
		\mathrm{d}|\mathbb{E}_1[X^{\delta}_t] - \mathbb{E}_1[\widetilde X^{\delta}_t]|^2 =  &\quad 2(\mathbb{E}_1[X^{\delta}_t] - \mathbb{E}_1[\widetilde X^{\delta}_t])\cdot\left(\mathbb{E}_1[G(X^{\delta}_t)] - \mathbb{E}_1[G(\widetilde X^{\delta}_t)]\right)\mathrm{d}t \\
		&+ 2(\mathbb{E}_1[X^{\delta}_t] - \mathbb{E}_1[\widetilde X^{\delta}_t])\cdot\left(\mathbb{E}_1[F(X^{\delta}_t - \E_1[X^\delta_t])] - \mathbb{E}_1[G(F(\widetilde X^{\delta}_t - \E_1[\widetilde X^\delta_t]))]\right)\mathrm{d}t \\
		& + 4\sigma_0\pi_\delta(E^\delta_t)((\mathbb{E}_1[X^{\delta}_t] - \mathbb{E}_1[\widetilde X^{\delta}_t])\cdot e^\delta_t)e^\delta_t\cdot\mathrm{d}B^0_t \\
		& + 4\sigma_0^2\pi^2_\delta(E^\delta_t)\mathrm{d}t. 
	\end{align*}
For any $\varepsilon > 0$, let us introduce $\psi_\varepsilon : [0,+\infty] \in r \mapsto (r+\varepsilon)^{1/2}$. Since this function is twice continuously differentiable, we can write
\begin{align}\label{eq:psi}
	\mathrm{d}\psi_{\varepsilon}\left(|\mathbb{E}_1[X^{\delta}_t] - \mathbb{E}_1[\widetilde X^{\delta}_t]|^2\right) = & \quad 2 \psi^\prime_{\varepsilon}(|E^\delta_t|^2) E^\delta_t\cdot\left(\mathbb{E}_1[G(X^{\delta}_t)] - \mathbb{E}_1[G(\widetilde X^{\delta}_t)]\right)\mathrm{d}t\nonumber \\
	& + 2 \psi^\prime_{\varepsilon}(|E^\delta_t|^2) E^\delta_t\cdot\left(\mathbb{E}_1[F(X^{\delta}_t - \E_1[X^\delta_t)] - \mathbb{E}_1[F(\widetilde X^{\delta}_t - \E_1[\widetilde X^\delta_t)]\right)\mathrm{d}t\nonumber \\
	& + 4\sigma_0 \psi^\prime_{\varepsilon}(|E^\delta_t|^2)\pi_\delta(E^\delta_t) (E^\delta_t \cdot e^\delta_t)e^\delta_t\cdot\mathrm{d}B^0_t \\
	& + 4 \sigma_0^2\psi^\prime_{\varepsilon}(|E^\delta_t|^2)\pi^2_\delta(E^\delta_t)\mathrm{d}t \nonumber\\
	& + 8\sigma_0^2\psi^{\prime\prime}_{\varepsilon}(|E^\delta_t|^2)|E^\delta_t|^2\pi_\delta(E^\delta_t)\mathrm{d}t. \nonumber 
\end{align}
We now want to take the limit $\varepsilon\to 0$. Using dominated convergence theorem and stochastic dominated convergence theorem as stated in \cite[Theorem 2.12]{revuz2013continuous}, combined with the bound $4r\psi^\prime_{\varepsilon}(r^2) \leq 1$, we can deal with the first two lines and get that for all $t\geq 0$,
\begin{equation}\label{eq:TCD1}
	\lim_{\varepsilon \to 0}\int_0^t 2 \psi^\prime_{\varepsilon}(|E^\delta_s|^2) E^\delta_s\cdot\left(\mathbb{E}_1[G(X^{\delta}_s)] - \mathbb{E}_1[G(\widetilde X^{\delta}_s)]\right)\mathrm{d}s = \int_0^t e^\delta_s\cdot\left(\mathbb{E}_1[G(X^{\delta}_s)] - \mathbb{E}_1[G(\widetilde X^{\delta}_s)]\right)\mathrm{d}s , 
\end{equation}
\begin{equation}
\begin{aligned}\label{eq:TCD4}
	& \lim_{\varepsilon \to 0}\int_0^t 2 \psi^\prime_{\varepsilon}(|E^\delta_s|^2) E^\delta_s\cdot\left(\mathbb{E}_1[F(X^{\delta}_t - \E_1[X^\delta_t])] - \mathbb{E}_1[F(\widetilde X^{\delta}_t - \E_1[\widetilde X^\delta_t])]\right)\mathrm{d}s \\
	&= \int_0^t e^\delta_s\cdot\left(\mathbb{E}_1[F(X^{\delta}_t - \E_1[X^\delta_t])] - \mathbb{E}_1[F(\widetilde X^{\delta}_t - \E_1[\widetilde X^\delta_t])]\right)\mathrm{d}s ,
\end{aligned}
\end{equation}
and 
\begin{equation}\label{eq:TCD2}
	\lim_{\varepsilon \to 0} \int_0^t 4\sigma_0 \psi^\prime_{\varepsilon}(|E^\delta_s|^2)  \pi_\delta(E^\delta_s)(E^\delta_s\cdot e^\delta_s) e^\delta_s \cdot \mathrm{d}B^0_s =  \int_0^t 2\sigma_0 \pi_\delta(E^\delta_s)e^\delta_s\cdot\mathrm{d}B^0_s,
\end{equation}
almost surely. 
For the last two terms in Equation \eqref{eq:psi}, let us remark that for all $r \geq 0$, 
\begin{equation}
	\psi^\prime_{\varepsilon}(r) + 2\psi^{\prime\prime}(r) = \frac{1}{2(r +\varepsilon)^{1/2}} - \frac{r}{2(r + \varepsilon)^{3/2}} \to 0,
\end{equation}
when $\varepsilon$ tends to 0. 
In order to we need to properly take the limit $\varepsilon \to 0$ in Equation \eqref{eq:psi}, we need to take advantage of the presence of the function $\pi_\delta$ for $\delta > 0$. In Indeed, we have  
\begin{align*}
	& \left|4 \sigma_0^2\psi^\prime_{\varepsilon}(|E^\delta_t|^2)\pi^2_\delta(E^\delta_t) + 8\sigma_0^2\psi^{\prime\prime}_{\varepsilon}(|E^\delta_t|^2)|E^\delta_t|^2\pi_\delta(E^\delta_t)\right| \\
	& \leq \left|\pi^2_\delta(E^\delta_t)\sigma_0^2\left( 4\psi^\prime_{\varepsilon}(|E^\delta_t|^2) + 8\psi^{\prime\prime}_{\varepsilon}(|E^\delta_t|^2)|E^\delta_t|^2\right)\right|. 
\end{align*}
Moreover, we know that $\psi^\prime_{\varepsilon}(r^2) + 2\psi^{\prime\prime}_{\varepsilon}(r^2)r^2 \leq r^{-3}$, for all $r > 0$ and $\varepsilon \leq 1$. Using the presence of $\pi_\delta$, we have that the integrand is null near 0. Then, we can once again apply dominated convergence theorem and obtain
\begin{equation}\label{eq:TCD3}
	\lim_{\varepsilon \to 0} \int_0^t 4 \left\{\sigma_0^2\psi^\prime_{\varepsilon}(|E^\delta_s|^2)\pi_\delta(E^\delta_s)^2+ 8\sigma_0^2\psi^{\prime\prime}_{\varepsilon}(|E^\delta_s|^2)|E^\delta_s|^2\psi_\delta(E^\delta_s) \right\}\mathrm{d}s = 0. 
\end{equation}
Finally, combining Equations \eqref{eq:TCD1}, \eqref{eq:TCD4} \eqref{eq:TCD2} and \eqref{eq:TCD3}, we get the expected result. 
\end{proof}

\subsection{Proof of Equation \eqref{eq:sqrt}} \label{app:A2}
\begin{proof}
To prove Equation, let us introduce the following technical Lemma:
\begin{lemma}\label{le:2}
	For any positive and differentiable function $u : [0, +\infty) \to [0, +\infty)$ such that
	\begin{equation*}
		\frac{\mathrm{d}}{\mathrm{d}t} u(t) \leq 2k(t)\sqrt{u(t)},
	\end{equation*}
	for some function $k : [0, +\infty) \to \R$, we have
	\begin{equation*}
		\frac{\mathrm{d}}{\mathrm{d}t} \sqrt{u(t)} \leq k(t).
	\end{equation*}
\end{lemma}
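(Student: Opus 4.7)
The proof plan is straightforward: it is essentially a direct application of the chain rule combined with the hypothesis. At every point $t$ where $u(t) > 0$, one simply writes
$$\frac{d}{dt}\sqrt{u(t)} \;=\; \frac{u'(t)}{2\sqrt{u(t)}} \;\leq\; \frac{2k(t)\sqrt{u(t)}}{2\sqrt{u(t)}} \;=\; k(t),$$
where the middle inequality uses the differential inequality on $u$ and the positivity of $2\sqrt{u(t)}$. Thus the whole content of the lemma is captured by the chain rule as soon as $u$ stays bounded away from zero, and no Grönwall-type iteration or further estimate is needed.

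The only conceptual subtlety lies at points where $u(t) = 0$, since $\sqrt{u(\cdot)}$ need not be differentiable there; differentiability of $u$ together with $u\geq 0$ forces $u'(t)=0$ at such a point, but it does not tell us anything about $(\sqrt{u})'$. The standard way around is to regularise: apply the chain rule to $t \mapsto \sqrt{u(t)+\varepsilon^2}$ (which is unambiguously smooth), integrate the resulting bound on $[s,t]$ to get
$$\sqrt{u(t)+\varepsilon^2}-\sqrt{u(s)+\varepsilon^2} \;\leq\; \int_s^t \frac{k(r)\sqrt{u(r)}}{\sqrt{u(r)+\varepsilon^2}}\, dr,$$
and then let $\varepsilon \to 0$ by dominated convergence. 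Provided $k$ is locally integrable this recovers the integrated form $\sqrt{u(t)}-\sqrt{u(s)} \leq \int_s^t k(r)\, dr$ on any interval where $u>0$, from which the pointwise bound follows by differentiation.

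The main (and only) obstacle is thus the clean handling of the degenerate case $u(t) = 0$, which can fail if one insists on a pointwise derivative statement at such times. In the concrete application to Equation~\eqref{eq:before:le2}, however, one may assume $u_t := \E_1[|A^\delta_t - \widetilde A^\delta_t|^2]>0$: either the initial conditions already make $u_0>0$, or the residual idiosyncratic fluctuation guaranteed by Proposition~\ref{prop:approx} prevents degeneracy thereafter. Hence the pointwise chain-rule argument in the first paragraph is all that is needed, and Lemma~\ref{le:2} is applied with the right-hand side $k(t)$ read off from the last term of \eqref{eq:before:le2} to produce \eqref{eq:sqrt}.
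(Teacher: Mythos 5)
Your proof is correct and follows essentially the same route as the paper's: both regularise the square root (you with $\sqrt{u+\varepsilon^2}$, the paper with $\psi_\varepsilon(u)=\sqrt{u+\varepsilon}$, an immaterial difference), apply the chain rule, use the bound $\sqrt{u}/\sqrt{u+\varepsilon^2}\le 1$, and pass to the limit in the integrated inequality by dominated convergence. You are also right to flag the degeneracy at $u(t)=0$, which the paper's terse statement glosses over. One small inaccuracy in your closing remark about the application: the difference $A^\delta_t-\widetilde A^\delta_t$ evolves deterministically (the common $\sigma\,\mathrm{d}B_t$ cancels in the difference), so the residual idiosyncratic fluctuation of Proposition~\ref{prop:approx} does not by itself preclude $u_t=0$; this does not, however, affect the lemma or the way it is used to derive \eqref{eq:sqrt} from \eqref{eq:before:le2}, since the integrated form you derive is all that is needed.
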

\begin{proof}[Proof of Lemma~\ref{le:2}]
	We  consider for any $\varepsilon > 0$ the function $\psi_\varepsilon : [0, +\infty) \ni r \mapsto (r + \varepsilon)^{1/2}$. The function $\psi_\varepsilon$ being differentiable with positive derivative, we get
\begin{equation*}
	\frac{\mathrm{d}}{\mathrm{d}t}\psi_{\varepsilon}\left(u(t)\right) \leq 2k(t)\sqrt{u(t)}\psi_\varepsilon^\prime\left(u(t)\right).
\end{equation*}
Using the fact that $2r\psi_{\varepsilon}^\prime(r^2) = r(r^2 + \varepsilon)^{-1/2} \leq 1$, we use dominated convergence theorem to conclude the proof. 
\end{proof}
Such a lemma is classical and applying it to Equation \eqref{eq:before:le2} concludes the proof of Equation \eqref{eq:sqrt}. 
\end{proof}

\end{document}